\documentclass[12pt]{amsart}
\usepackage{latexsym}
\usepackage{amssymb} 
\usepackage{mathrsfs}
\usepackage{amsmath}
\usepackage{latexsym}
\usepackage{delarray}
\usepackage{amssymb,amsmath,amsfonts,amsthm,
mathrsfs}

\setlength{\textwidth}{15.2cm}
\setlength{\textheight}{22.7cm}
\setlength{\topmargin}{0mm}
\setlength{\oddsidemargin}{3mm}
\setlength{\evensidemargin}{3mm}
\setlength{\footskip}{1cm}

\usepackage{hyperref}
\renewcommand\eqref[1]{(\ref{#1})} 




\newtheorem{theorem}{Theorem}[section]
\newtheorem{corollary}[theorem]{Corollary}
\newtheorem{lemma}[theorem]{Lemma}
\newtheorem{proposition}[theorem]{Proposition}
\newtheorem{definition}[theorem]{Definition}
\theoremstyle{definition}
\newtheorem{remark}[theorem]{Remark}
\newtheorem{example}[theorem]{Example}


\newcommand{\wt}[1]{\widetilde{#1}}




\newcommand{\Cinfc}{\ensuremath{\mathcal{C}^\infty_{\text{c}}}}
\newcommand{\D}{\ensuremath{{\mathcal D}}}
\renewcommand{\S}{\mathscr{S}}
\newcommand{\E}{\ensuremath{{\mathcal E}}}


\newcommand{\mb}[1]{\ensuremath{\mathbb{#1}}}
\newcommand{\N}{\mb{N}}

\newcommand{\R}{\mb{R}}






\newcommand{\lara}[1]{\langle #1 \rangle}

\newfont{\bigmath}{cmr12 at 13pt}

\newfont{\grecomath}{cmmi12 at 15pt}




%

\newfont{\bl}{msbm10 scaled \magstep2}


\newcommand{\beq}{\begin{equation}}
\newcommand{\eeq}{\end{equation}}





\newcommand{\notmid}{\mid\kern-0.5em\not\kern0.5em}

\newcommand{\inner}[3][\empty]{\ifx#1\empty\left( #2|#3\right)\else#1( #2|#3 #1)\fi}




\newcommand{\eps}{\varepsilon}













\renewcommand\N{{\mathbb N}_0}


\title[On wave equation]
{On the wave equation with multiplicities and space-dependent irregular coefficients}

\author[Claudia Garetto]{Claudia Garetto}
\address{
  Claudia Garetto:
  \endgraf
  Department of Mathematical Sciences
  \endgraf
  Loughborough University
  \endgraf
  Loughborough, Leicestershire, LE11 3TU
  \endgraf
  United Kingdom
  \endgraf
  {\it E-mail address} {\rm c.garetto@lboro.ac.uk}
  }

\date{}

\subjclass[2010]{Primary 35L05; 35L15; Secondary 46F99;}
\keywords{Wave equation, multiplicities, singularities}

\begin{document}

\maketitle

\begin{abstract}
In this paper we study the well-posedness of the Cauchy problem for a wave equation with multiplicities and space-dependent irregular coefficients. 
As in \cite{GR:14}, in order to give a meaningful notion of solution, we employ the notion of very weak solution, which construction is based on a parameter dependent regularisation
of the coefficients via mollifiers. We prove that, even with distributional coefficients, a very weak solution exists for our Cauchy problem and it converges to the classical one when the coefficients are smooth. 
The dependence on the mollifiers of very weak solutions is investigated at the end of the paper in some instructive examples.
\end{abstract}

\section{Introduction}
We want to study the well-posedness of the Cauchy problem
\beq
\label{CP_wave}
\begin{split}
\partial_t^2u-\sum_{i=1}^na_i(x)\partial^2_{x_i} u&=f(t,x),\quad t\in[0,T], x\in\R^n,\\
u(0,x)&=g_0,\\
\partial_tu(0,x)&=g_1,
\end{split}
\eeq
where all the functions involved are real valued and $a_i(x)\ge 0$ for all $x\in\R^n$ and $i=1,\dots,n$.  We assume that the coefficients $a_i$  and the right-hand side $f$ are irregular, i.e., discontinuous or more in general distributions. Typical examples are given by the coefficients $a_i$ being jump functions (Heaviside) or positive distributions. This kind of equation appears naturally in geophysics when modelling the propagation of waves (acoustic waves, seismic waves, etc..) through a multi-layered medium (e.g. subsoil), see \cite{AdHSU:08, BdHSU:11, CdHKU:19, dHHO:08, dHHSU:12, HdH01, HdH02} and references therein. In \cite{GR:14} the author and M. Ruzhansky have studied second order hyperbolic equations with non-regular coefficients only depending on time. Here we assume that the discontinuity is spatial. This requires totally different techniques with respect to the ones employed in \cite{GR:14} and widely enlarges the family of physical problems we can treat. In addition, since the coefficients $a_i$ might vanish we allow our problem to have multiplicities. 

It is a notoriously difficult problem to prove the well-posedness of the Cauchy problem for a hyperbolic equation or system with multiplicities, since the presence of multiplicities might have a disruptive effect and leads to results of non-existence or non-uniqueness (see for instance \cite{CS} and \cite{CJS2}). Many authors have proven Gevrey well-posedness for hyperbolic Cauchy problems with multiplicities mainly when the coefficients depend only on time (see \cite{ColKi:02, ColKi:02-2, CDS, dAKi:05, GR:11, GR:12, GarRuz:3, KS} and references therein). The methods employed to obtain Gevrey well-posedness often combine algebraic transformations like symmetrisation or quasi-symmetrisation and energy estimates and require specific conditions of Levi-type on the lower order terms \cite{dASpa:98, GR:12, GarJ}. $C^\infty$ well-posedness has also been obtained for some special classes of $t$-dependent equations and systems, for instance in \cite{GarRuz:7, JT}. A full understanding of hyperbolic Cauchy problems with multiplicities and time- and space-dependent coefficients is still missing. Some special cases have been analysed but often this means to impose strong assumptions on the multiplicities themselves and the regularity of the coefficients. In the case of space dependent coefficients we recall the celebrated work of Bronshtein \cite{B} for hyperbolic equations with multiplicities. This result was extended to $(t,x)$ scalar equations by Ohya and Tarama  \cite{OT:84} and to systems by Kajitani and Yuzawa \cite{KY:06}. In all these results the regularity assumptions on the coefficients are always quite strong with respect to $x$  (Gevrey) and do not go below H\"older in $t$. It is unclear which kind of well-posedness once could get when the regularity in the space variable is less than Gevrey and which hypotheses on coefficients, eigenvalues, multiplicities and lower order terms will lead to $C^\infty$ well-posedness in the general $(t,x)$-dependent case. Finally, we recall that some geometrical and microlocal analytic approach to hyperbolic equations and systems with multiplicities has been employed by authors as Melrose and Uhlmann \cite{MU:1,MU:2}, Dencker \cite{Dencker}, H\"ormander \cite{Hord},
Ivrii and Petkov \cite{IvPet}, Kamotski and Ruzhansky \cite{KR2}, Kucherenko and Osipov \cite{9}, Mascarello and Rodino \cite{MR}, Parenti and Parmeggiani \cite{PP}, Yagdjian \cite{Yagd}, to quote a few and more recently by the author in collaboration with J\"ah and Ruzhanskly to prove well-posedness in anisotropic Sobolev spaces for non-diagonalisable hyperbolic systems with multiplicities in \cite{GJR1, GJR2}. 

In this paper we want to study the Cauchy problem \eqref{CP_wave} for the $x$-dependent wave equation in presence of \emph{both singularities and multiplicities}. We recall that a Cauchy problem is $C^\infty$ (globally) well-posed if for any smooth right-hand side $f$ and initial data $g_0, g_1$ it has a unique solution $u\in C^\infty([0,T]\times\R^n)$. In the sequel we give a short survey on the \emph{status of the art} for the Cauchy problem above, by discussing the results in the literature and the standing open problems.

Second order weakly hyperbolic equations (i.e. with multiplicities) have been stu\-di\-ed by Oleinik in her pioneristic work \cite{O70}. Here she proved that the Cauchy problem for a second order hyperbolic operator in variational form,
\begin{multline*}
Lu=u_{tt}-\sum_{i,j=1}^n (a_{ij}(t,x)u_{x_j})_{x_i}+\sum_{i=1}^n[(b_i(t,x)u_{x_i})_t+b_i(t,x)u_t)_{x_i}]\\
+c(t,x)u_t+\sum_{i=1}^n d_i(t,x)u_{x_i}+e(t,x)u
\end{multline*}
with smooth and bounded coefficients, i.e., coefficients in $B^\infty([0,T]\times\R^n)$, the space of smooth functions with bounded derivatives of any order $k\ge 0$, is $C^\infty$ well-posed provided that the lower order terms fulfil a specific Levi condition known nowadays as Oleinik's condition: there exist $A,C>0$ such that 
\[
\biggl[\sum_{i=1}^n d_i(t,x)\xi_i\biggr]^2\le C\biggl\{A\sum_{i,j=1}^na_{ij}(t,x)\xi_i\xi_j-\sum_{i,j=1}^n\partial_ta_{ij}(t,x)\xi_i\xi_j\biggr\},
\]
for all $t\in[0,T]$ and $x,\xi\in\R^n$. Note that Oleinik's condition is automatically fulfilled when the coefficients of the principal part are independent of $t$ and the $d_i$'s vanish identically.

Since 
\[
\partial_t^2u-\sum_{i=1}^na_i(x)\partial^2_{x_i} u
\]
can be written as 
\[
Lu=\partial_t^2u-\partial_{x_i}\biggl(\sum_{i=1}^na_i(x)\partial_{x_i} u\biggr)+\sum_{i=1}^n\partial_{x_i}a_i(x)\partial_{x_i}u
\]
we have that in our specific case the Oleinik's condition is formulated as follows:
\beq
\label{est_OL}
\biggl[\sum_{i=1}^n \partial_{x_i}a_i(x)\xi_i\biggr]^2\le CA\sum_{i=1}^n a_i(x)\xi_i^2.
\eeq
The estimate \eqref{est_OL} holds automatically by Glaeser's inequality if the coefficients $a_i$ are positive, at least of class $C^2$ and with bounded second order derivatives. Indeed (Glaeser's inequality), 
\emph{if $a\in C^2(\R^n)$, $a(x)\ge 0$ for all $x\in\R^n$ and 
\[
\sum_{i=1}^n\Vert \partial^2_{x_i}a\Vert_{L^\infty}\le M,
\]
for some constant $M>0$. Then,  
\[
|\partial_{x_i}a(x)|^2\le 2M a(x),
\]
for all $i=1,\dots,n$ and $x\in\R^n$.}
 
We therefore conclude that 
\begin{itemize}
\item the Cauchy problem \eqref{CP_wave} is $C^\infty$ well-posed if the coefficients $a_i$ are positive and \emph{regular}, i.e. smooth with bounded derivatives of any order.
\end{itemize}
It is natural to ask what happens to the $C^\infty$ well-posedness when we drop the assumption of regularity on the coefficients $a_i$. The introduction of non-regular coefficients, for instance discontinuous coefficients, entails some foundational problems like how to define a meaningful notion of solution. In view of the famous Schwartz impossibility result on multiplication of distributions we might fail to give a meaning to the pro\-duct $a_i(x)\partial_{x_i}^2u$ when $a_i$ is a discontinuous function or more in general a distribution. It is therefore reasonable to follow the approach adopted for hyperbolic equations with irregular $t$-dependent coefficients in \cite{GR:14}. Mainly, to regularise the original problem via convolution with suitable mollifiers and to look for nets of smooth solutions. This means to find a \emph{very weak} solution for the Cauchy problem \eqref{CP_wave} and therefore to prove well-posedness in the \emph{very weak} sense. 
Note that 
\begin{itemize}
\item no results of very weak well-posedness are known for hyperbolic equations with multiplicities and irregular coefficients depending on $x$. 
\end{itemize}
Symmetric hyperbolic systems have been investigated by Lafon and Oberguggenberger in \cite{LO:91} but the wave equation we are studying here cannot be easily transformed into a symmetric system due to its multiplicities. Here we combine the regularisation methods employed in \cite{GR:14} with the symmetrisation techniques for hyperbolic systems with multiplicities employed in \cite{J:09, JT, ST}. In particular, we take inspiration from the study of weakly hyperbolic homogeneous equations done by Spagnolo and Taglialatela in \cite{ST} in the 1-space dimensional case. Very weak solutions have been recently employed to study different classes of PDEs with singularities in \cite{RT, RY}. A numerical analysis of very weak solutions has also been initiated in \cite{ART:19, MRT:19} for the wave equation with irregular time-dependent coefficients leading to the discovery of new interesting echo-effects (see \cite{MRT:19, SW}). This paper is the needed theoretical background for the numerical analysis of very weak solutions when discontinuities appear in space rather than in time. 

The {\bf main result} of the paper is the proof that

\emph{the Cauchy problem \eqref{CP_wave}
\[
\begin{split}
\partial_t^2u-\sum_{i=1}^na_i(x)\partial^2_{x_i} u&=f(t,x),\quad t\in[0,T], x\in\R^n,\\
u(0,x)&=g_0,\\
\partial_tu(0,x)&=g_1,
\end{split}
\]
where  $a_i\in\E'(\R^n)$ with $a_i\ge 0$ for all $i=1,\dots ,n$, $f\in C^\infty([0,T],\E'(\R^n))$ and $g_0,g_1\in\E'(\R^n)$, is \emph{very weakly well-posed}, i.e., a very weak solution exists and it is unique in a suitable sense (see Definition \ref{vw_well_posed}).} Note that our result recovers the classical $C^\infty$ well-posedness when the coefficients $a_i$ are regular, i.e., smooth.  

The paper is organised as follows.

In Section \ref{sec_prelim} we recall some basic notions concerning very weak solutions and the technical methods (Glaeser's inequality) employed in proving the well-posedness of the Cauchy problem \eqref{CP_wave}. As a preparatory work and to better explain the strategy adopted in our proof we study the Cauchy problem \eqref{CP_wave} in one-space dimension in Section \ref{sec_1}. In Section \ref{sec_CP_smooth} we prove the $C^\infty$ well-posedness of the Cauchy problem \eqref{CP_wave} in case of smooth coefficients. We provide here an alternative proof to the one given by Oleinik in \cite{O70} which has the potential to be extended to higher order hyperbolic equations,  as done in \cite{ST} in the 1-dimensional case. In Section \ref{sec_CP_vw} we prove the very weak well-posedness of the Cauchy problem \eqref{CP_wave} in presence of non regular coefficients. Finally, in Section \ref{sec_CP_appl} we show that our result recovers the classical one when the coefficients are smooth and we discuss some interesting physical examples by analysing the dependence of the solution on the mollifiers employed in the regularisation process.

\section{Preliminaries}
\label{sec_prelim}

\subsection{Very weak solutions and very weak well-posedness}
In the sequel, $\varphi$ is a mollifier, i.e., a compactly supported smooth function with $\int_{\R^n}\varphi(x)\, dx=1$. Given a positive function $\omega(\eps)$  converging to $0$ as $\eps\to 0$ we define the regularising net 
\[
\varphi_{\omega(\eps)}(x)=\omega(\eps)^{-1}\varphi(x/\omega(\eps)).
\]
A distribution $u\in\E'(\R^n)$ can be regularised by convolution with the mollifier $\varphi_{\omega(\eps)}$. This generates a net of smooth functions whose properties will be investigated in the sequel. For a detailed proof of the following results we refer the reader to \cite{GR:14, GKOS:01, Oberguggenberger:Bk-1992}  and reference therein.
\begin{proposition}
\label{prop_reg_nets}
\leavevmode
\begin{itemize}
\item[(i)] If $u\in\mathcal{E}'(\R^n)$ then there exists $N\in\N$ and for all $\alpha\in\N^n$ there exists $c>0$ such that 
\[
|\partial^\alpha(u\ast\varphi_{\omega(\eps)})(x)|\le c\,\omega(\eps)^{-N-|\alpha|},
\]
for all $x\in\R^n$ and $\eps\in(0,1]$.
\item[(ii)] If $f\in C^\infty_c(\R^n)$ then for all $\alpha\in\N^n$ there exists $c>0$ such that 
\[
|\partial^\alpha(f\ast\varphi_{\omega(\eps)})(x)|\le c 
\]
for all $x\in\R^n$ and $\eps\in(0,1]$.
\item[(iii)] If $f\in C^\infty_c(\R^n)$ and the mollifier $\varphi$ has all the moments vanishing, i.e., $\int\varphi(x)\, dx=1$ and $\int_{\R^n}x^\alpha\varphi(x)\, dx=0$ for all multi-index $\alpha$ with $|\alpha|>1$,  then for all $\alpha\in\N^n$ and for all $q\in\N$ there exists $c>0$ such that 
\[
|\partial^\alpha(f\ast\varphi_{\omega(\eps)}(x)-f(x))|\le c\,\omega(\eps)^q,
\]
for all $x\in\R^n$ and $\eps\in(0,1]$.
\end{itemize}
\end{proposition}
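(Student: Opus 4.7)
For part (i), I would invoke the structure theorem for compactly supported distributions: since $u\in\mathcal{E}'(\R^n)$ there exist an integer $N$, a compact neighbourhood $K$ of $\supp u$, and a constant $C>0$ such that
\[
|\langle u,\psi\rangle|\le C\sum_{|\beta|\le N}\sup_{y\in K}|\partial^\beta\psi(y)|,\qquad \psi\in\Cinf(\R^n).
\]
Writing $\partial^\alpha(u\ast\varphi_{\omega(\eps)})(x)=\langle u_y,(\partial^\alpha\varphi_{\omega(\eps)})(x-y)\rangle$ and noting that each differentiation of $\varphi_{\omega(\eps)}(x-y)$ in $x$ or $y$ produces a factor $\omega(\eps)^{-1}$ by the chain rule (on top of the $\omega(\eps)^{-1}$ from the definition of $\varphi_{\omega(\eps)}$), while the remaining factor $(\partial^{\alpha+\beta}\varphi)((x-y)/\omega(\eps))$ is uniformly bounded in $L^\infty$, the continuity estimate yields a bound of the form $c\,\omega(\eps)^{-1-|\alpha|-N}$. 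Absorbing the constant $1$ into $N$ gives the claim.

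For part (ii), I would commute the derivative with the convolution, $\partial^\alpha(f\ast\varphi_{\omega(\eps)})=(\partial^\alpha f)\ast\varphi_{\omega(\eps)}$, and perform the rescaling $y=\omega(\eps)z$ in the convolution integral. This reduces the estimate, up to a power of $\omega(\eps)$ which is bounded for $\eps\in(0,1]$, to
\[
|\partial^\alpha(f\ast\varphi_{\omega(\eps)})(x)|\le \|\partial^\alpha f\|_{L^\infty}\int_{\R^n}|\varphi(z)|\,dz,
\]
which is a constant independent of $x$ and $\eps$ since $f\in\Cinfc(\R^n)$.

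Part (iii) is the substantive moment-cancellation estimate. After passing the derivative onto $f$ it suffices to treat $\alpha=0$ with $g=\partial^\alpha f\in\Cinfc(\R^n)$ in place of $f$. The change of variables $y=\omega(\eps)z$ gives, up to the same scaling factor as in (ii),
\[
(g\ast\varphi_{\omega(\eps)})(x)-g(x)=\int_{\R^n}\bigl[g(x-\omega(\eps)z)-g(x)\bigr]\varphi(z)\,dz.
\]
I would then Taylor-expand $g(x-\omega(\eps)z)$ about $x$ to order $q$: every term of order $1\le|\gamma|<q$ carries the factor $\int_{\R^n}z^\gamma\varphi(z)\,dz$, which vanishes by the hypothesis on the moments of $\varphi$, so only the Lagrange remainder survives. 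On the compact support of $\varphi$ this remainder is pointwise bounded by $C|\omega(\eps)z|^q$, yielding the desired estimate $c\,\omega(\eps)^q$.

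There is no substantive obstacle here, since all three items are classical tools from the analysis of mollifiers and the Colombeau framework. The only delicate point is the bookkeeping of powers of $\omega(\eps)$ in (i), where the scaling of $\varphi_{\omega(\eps)}$, the $x$-differentiation $\partial^\alpha$, and the additional $y$-derivatives produced by the order-$N$ continuity estimate for $u$ must all be collected consistently; once that is tracked carefully, items (ii) and (iii) amount to a direct change of variables and, in (iii), the standard moment cancellation via Taylor expansion.
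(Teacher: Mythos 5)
Your argument is correct and is exactly the standard one; the paper itself does not prove Proposition \ref{prop_reg_nets} but defers to \cite{GR:14, GKOS:01, Oberguggenberger:Bk-1992}, where the proofs run along the same lines as yours (continuity/order estimate for $u\in\E'$ in (i), commuting $\partial^\alpha$ with the convolution and rescaling in (ii), Taylor expansion with moment cancellation in (iii)). Two cosmetic caveats, both traceable to the paper's statement rather than to your reasoning: first, a nontrivial mollifier with all moments vanishing cannot be compactly supported (the paper concedes this in Section \ref{sec_CP_appl}, taking $\psi\in\S$), so in (iii) you should not invoke ``the compact support of $\varphi$'' to control the remainder --- but since $g\in\Cinfc$ has globally bounded derivatives the Lagrange remainder is $\le C|\omega(\eps)z|^q$ for every $z$, and $\int|z|^q|\varphi(z)|\,dz<\infty$ for Schwartz $\varphi$, so the estimate survives unchanged. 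Second, the hypothesis as printed ($\int x^\alpha\varphi\,dx=0$ for $|\alpha|>1$) would leave the first-order Taylor terms alive and only give $O(\omega(\eps))$; you correctly read it as $|\alpha|\ge 1$, which is what is intended and what the later consistency proof uses.
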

\begin{remark}
\label{rem_net}
Since $\omega(\eps)$ tends to $0$ as $\eps\to 0$ it is not restrictive to assume that it is bounded. If there exists $c_1,c_2>0$ and $r>0$ such that 
\[
c_2\eps^r\le \omega(\eps)\le c_1,
\]
for all $\eps\in(0,1]$ then $\omega(\eps)$ can be replaced with $\eps$ in the estimates in Proposition \ref{prop_reg_nets}. For the sake of simplicity we will say that $\omega(\eps)$ is a \emph{positive scale}.
\end{remark}
\begin{remark}
\label{rem_non_compact}
The estimates of Proposition \ref{prop_reg_nets} can be locally extended to $u\in\D'(\R^n)$ and $f\in C^\infty(\R^n)$. This can be done by introducing an open covering of $\R^n$, corresponding cut-off functions and then a partition of unity subordinated to the covering. For the technical details we refer to Section 1.2.2 in \cite{GKOS:01}.  
\end{remark}
In the sequel, the notation $K\Subset\R^n$ means that $K$ is a compact set in $\R^n$. We now consider nets of smooth functions and we introduce the notions of $C^\infty$-moderate net and 
$C^\infty$-negligible net. 
\begin{definition}
\label{def_mod_neg}
\leavevmode
\begin{itemize}
\item[(i)] A net $(v_\eps)_\eps\in C^\infty(\R^n)^{(0,1]}$ is $C^\infty$-moderate if for all $K\Subset\R^n$ and for all $\alpha\in\N^n$ there exist $N\in\N$ and $c>0$ such that
\beq
\label{mod_C_inf}
|\partial^\alpha v_\eps(x)|\le c\eps^{-N},
\eeq
uniformly in $x\in K$ and $\eps\in(0,1]$.
\item[(ii)] A net $(v_\eps)_\eps\in C^\infty(\R^n)^{(0,1]}$  is $C^\infty$-negligible if for all $K\Subset\R^n$, $\alpha\in\N^n$ and $q\in\N$ there exists $c>0$ such that
\beq
\label{neg_C_inf}
|\partial^\alpha v_\eps(x)|\le c\eps^{q},
\eeq
uniformly in $x\in K$ and $\eps\in(0,1]$.
\end{itemize}
\end{definition}
Since in this paper we will only consider nets of smooth functions we can drop the suffix $C^\infty$- and adopt the simpler expressions \emph{moderate net} and \emph{negligible net}. The sets of moderate nets and the set of negligible nets are both
differential algebras, i.e. algebras closed with respect to derivation. From Proposition \ref{prop_reg_nets} and Remark \ref{rem_net} we easily obtain the following proposition which investigates how the regularisation depends on the choice of the mollifier and 
the $\eps$-scale. In other words, we prove that a change of mollifier and scale does not have an effect on the asymptotic behaviour of the regularisation as $\eps$ tends to $0$.
\begin{proposition}
\label{prop_nets_asymp}
Let $\omega(\eps), \omega_1(\eps),\omega_2(\eps)$ be positive scales. Let $\varphi, \varphi_1, \varphi_2$ be mollifiers. 
\begin{itemize}
\item[(i)] If $u\in\E'(\R^n)$ then $(u\ast\varphi_{\omega(\eps)})_\eps$ tends to $u$ as $\eps\to 0$ in the sense of distributions, i.e.,  
\[
\lim_{\eps\to 0}\int_{\R^n}u\ast\varphi_{\omega(\eps)}(x)\psi(x)\, dx=u(\psi),
\]
for all $\psi\in C^\infty(\R^n)$.
\item[(ii)] If $u\in\E'(\R^n)$ then $(u\ast\varphi_{\omega_1(\eps)}-u\ast\varphi_{\omega_2(\eps)})_\eps$ tends to $0$ in the sense of distributions.
\item[(iii)] If $u\in\E'(\R^n)$ then $(u\ast\varphi_{1,\omega(\eps)}-u\ast\varphi_{2,\omega(\eps)})_\eps$ tends to $0$ in the sense of distributions.
\end{itemize}
Assume in addition that the mollifiers have all the moments vanishing.
\begin{itemize}
\item[(iv)] If $f\in C^\infty_c(\R^n)$ then the net $(f\ast\varphi_{\omega(\eps)}-f)_\eps$ is negligible.
\item[(v)] If $f\in C^\infty_c(\R^n)$ then the net $(f\ast\varphi_{\omega_1(\eps)}-f\ast\varphi_{\omega_2(\eps)})_\eps$ is negligible.
\item[(vi)] If $f\in C^\infty_c(\R^n)$ then the net $(f\ast\varphi_{1,\omega(\eps)}-f\ast\varphi_{2,\omega(\eps)})_\eps$ is negligible.
\end{itemize}

\end{proposition}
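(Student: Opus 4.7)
The proof naturally splits into two halves: the distributional statements (i)--(iii) and the smooth negligibility statements (iv)--(vi). The plan is to reduce everything to Proposition \ref{prop_reg_nets} together with the standard fact that mollification of a smooth function converges to the function itself in $C^\infty$ uniformly on compact sets.

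For (i), I would use the duality identity
\[
\int_{\R^n}(u\ast\varphi_{\omega(\eps)})(x)\,\psi(x)\,dx \;=\; \langle u,\check\varphi_{\omega(\eps)}\ast\psi\rangle,
\]
where $\check\varphi(x)=\varphi(-x)$, and then argue that $\check\varphi_{\omega(\eps)}\ast\psi\to\psi$ in $C^\infty$ uniformly on any compact neighbourhood of $\supp u$. Since $u\in\E'(\R^n)$ is continuous on such a neighbourhood with respect to the $C^\infty$ topology, the pairing passes to the limit. Parts (ii) and (iii) follow at once from (i): both $u\ast\varphi_{\omega_1(\eps)}$ and $u\ast\varphi_{\omega_2(\eps)}$ converge distributionally to $u$, and likewise for $u\ast\varphi_{1,\omega(\eps)}$ and $u\ast\varphi_{2,\omega(\eps)}$, so their differences converge to $0$ in $\D'(\R^n)$.

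For (iv), I would directly invoke Proposition \ref{prop_reg_nets}(iii), which under the vanishing moment hypothesis gives $|\partial^\alpha(f\ast\varphi_{\omega(\eps)}(x)-f(x))|\le c\,\omega(\eps)^q$ for every $q\in\N$, and then combine it with Remark \ref{rem_net} to replace $\omega(\eps)$ by $\eps$; this exactly matches Definition \ref{def_mod_neg}(ii). For (v) and (vi), I would telescope the difference through $f$, writing
\[
f\ast\varphi_{\omega_1(\eps)}-f\ast\varphi_{\omega_2(\eps)} \;=\; \bigl(f\ast\varphi_{\omega_1(\eps)}-f\bigr)\;-\;\bigl(f\ast\varphi_{\omega_2(\eps)}-f\bigr),
\]
and analogously for (vi) with $\varphi_1,\varphi_2$ in place of $\omega_1,\omega_2$. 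Each summand is negligible by (iv), hence so is the total, since the sum of two negligible nets is negligible.

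The main technical point I expect to be careful about is the convergence step in (i): namely, that $\check\varphi_{\omega(\eps)}\ast\psi\to\psi$ in $C^\infty$ on a compact neighbourhood of $\supp u$ even though $\psi$ need not be compactly supported. This is handled by the localisation strategy described in Remark \ref{rem_non_compact}: multiply $\psi$ by a cut-off function equal to $1$ on a slightly larger neighbourhood of $\supp u$, so that the pairing with $u$ is unaffected and one may apply the standard uniform convergence of derivatives of mollified compactly supported smooth functions. Everything else is bookkeeping.
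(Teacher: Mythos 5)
Your proposal is correct and follows essentially the same route as the paper: parts (ii), (iii), (v), (vi) use the identical telescoping through $u$ (resp.\ $f$), and (iv) is the same direct appeal to Proposition \ref{prop_reg_nets}(iii) together with Remark \ref{rem_net}. For (i) the paper simply invokes the fact that $\varphi_{\omega(\eps)}\to\delta$ and the continuity of convolution, whereas you unpack this into the duality identity and a cut-off localisation; that is a legitimate and slightly more explicit rendering of the same argument, not a different approach.
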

\begin{proof}
\leavevmode
\begin{itemize}
\item[(i)] By definition of mollifier we have that $\varphi_{\omega(\eps)}$ tends to $\delta$ in the sense of distributions as $\eps\to 0$. Hence, by continuity of the convolution product we conclude that $(u\ast\varphi_{\omega(\eps)})_\eps$ tends to $u$ 
in $\E'(\R^n)$.
\item[(ii)] It follows from (i) by writing 
\[
u\ast\varphi_{\omega_1(\eps)}-u\ast\varphi_{\omega_2(\eps)}=(u\ast\varphi_{\omega_1(\eps)}-u)-(u\ast\varphi_{\omega_2(\eps)}-u).
\]
\item[(iii)] It follows from (i) by writing 
\[
u\ast\varphi_{1,\omega(\eps)}-u\ast\varphi_{2,\omega(\eps)}=(u\ast\varphi_{1,\omega(\eps)}-u)-(u\ast\varphi_{2,\omega(\eps)}-u).
\]
\item[(iv)] This statement is obtained by combining Proposition \ref{prop_reg_nets}(iii) with Remark \ref{rem_net}.
\item[(v)] This is obtained from (iv) by writing $f\ast\varphi_{\omega_1(\eps)}-f\ast\varphi_{\omega_2(\eps)}$ as 
\[
(f\ast\varphi_{\omega_1(\eps)}-f)-(f\ast\varphi_{\omega_2(\eps)}-f).
\]
\item[(vi)] This is obtained from (iv) by writing $(f\ast\varphi_{1,\omega(\eps)}-f\ast\varphi_{2,\omega(\eps)})_\eps$ as
\[
(f\ast\varphi_{1,\omega(\eps)}-f)-(f\ast\varphi_{2,\omega(\eps)}-f).
\]
\end{itemize}
\end{proof}
In an analogous way, by replacing $C^\infty(\R^n)$ with $C^\infty([0,T]\times\R^n)$, we can state the following definition. 
\begin{definition}
\label{def_mod_neg_u}
\leavevmode
\begin{itemize}
\item[(i)] A net $(v_\eps)_\eps\in C^\infty([0,T]\times\R^n)^{(0,1]}$ is moderate if for all $K\Subset\R^n$, $k\in\N$ and $\alpha\in\N^n$ there exist $N\in\N$ and $c>0$ such that
\beq
\label{mod_C_inf_u}
|\partial_t^k\partial^\alpha v_\eps(t,x)|\le c\eps^{-N},
\eeq
uniformly in $t\in[0,T]$, $x\in K$ and $\eps\in(0,1]$.
\item[(ii)] A net $(v_\eps)_\eps\in  C^\infty([0,T]\times\R^n)^{(0,1]}$ is negligible if for all $K\Subset\R^n$, $k\in\N$, $\alpha\in\N^n$ and for all $q\in\N$ there exists $c>0$ such that
\beq
\label{neg_C_inf_u}
|\partial_t^k\partial^\alpha v_\eps(t,x)|\le c\eps^{q},
\eeq
uniformly in $t\in[0,T]$, $x\in K$ and $\eps\in(0,1]$.
\end{itemize}
\end{definition}
Arguing as in Proposition \ref{prop_nets_asymp} we have the following.
\begin{proposition}
\label{prop_nets_asymp_2}
Let $f\in C^\infty([0,T], \E'(\R^n))$. Let $\varphi$ be a mollifier and $\omega(\eps)$ be a positive scale. Then, the net
\[
f_\eps(t,x)=f(t,\cdot)\ast \varphi_{\omega(\eps)}(x)
\]
is moderate. A change of mollifier and/or positive scale does not affect the asymptotic behaviour of the regularisation, i.e., 
\[
f(t,\cdot)\ast \varphi_{1,\omega_1(\eps)}(x)-f(t,\cdot)\ast \varphi_{2,\omega_2(\eps)}(x) \to 0
\]
in $C^\infty([0,T],\E'(\R^n))$.
\end{proposition}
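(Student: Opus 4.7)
The plan is to lift the pointwise-in-$t$ results of Propositions \ref{prop_reg_nets} and \ref{prop_nets_asymp} to statements uniform in $t \in [0,T]$. The hard part, common to both claims, will be extracting, for each $k \in \N$, a uniform compact support and a uniform order for the family $\{\partial_t^k f(t, \cdot) : t \in [0,T]\}$; once this is in hand, the rest reduces to the pointwise arguments already recorded in the two propositions above.

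To handle this, first I would observe that, since $f \in C^\infty([0,T], \E'(\R^n))$, for each $k$ the set $F_k := \{\partial_t^k f(t, \cdot) : t \in [0,T]\}$ is the continuous image of the compact interval $[0,T]$ and is therefore compact in $\E'(\R^n)$. Compact subsets of the strong dual of the Fr\'echet--Montel space $C^\infty(\R^n)$ are equicontinuous, which provides a compact set $K_0 \Subset \R^n$, an integer $N_0$ and a constant $C > 0$ such that $\supp u \subset K_0$ and
\[
|\langle u, \psi \rangle| \le C \sup_{|\beta| \le N_0, \, y \in K_0} |\partial^\beta \psi(y)|
\]
for every $u \in F_k$ and every $\psi \in C^\infty(\R^n)$.

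For moderateness I would then write
\[
\partial_t^k \partial_x^\alpha f_\eps(t, x) = \langle \partial_t^k f(t, \cdot), \partial_x^\alpha \varphi_{\omega(\eps)}(x - \cdot) \rangle
\]
and feed the test function $y \mapsto \partial_x^\alpha \varphi_{\omega(\eps)}(x - y)$ into the bound above. The scaling $\varphi_{\omega(\eps)}(z) = \omega(\eps)^{-n} \varphi(z / \omega(\eps))$ produces a majorant of order $\omega(\eps)^{-n - |\alpha| - N_0}$ uniformly for $(t, x) \in [0,T] \times K$; passing to a power of $\eps$ via Remark \ref{rem_net} turns this into the required moderateness bound \eqref{mod_C_inf_u}.

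For the mollifier/scale independence I would set $h_\eps(t, x) := f(t, \cdot) \ast \varphi_{1, \omega_1(\eps)}(x) - f(t, \cdot) \ast \varphi_{2, \omega_2(\eps)}(x)$. Convergence of $h_\eps$ to $0$ in $C^\infty([0,T], \E'(\R^n))$ amounts to $\sup_{t \in [0,T]} |\langle \partial_t^k h_\eps(t, \cdot), \psi \rangle| \to 0$ for every $k$ and every $\psi \in C^\infty(\R^n)$. Since time differentiation commutes with convolution in $x$, this pairing equals $\langle \partial_t^k f(t, \cdot), \psi \ast \check{\varphi}_{1, \omega_1(\eps)} - \psi \ast \check{\varphi}_{2, \omega_2(\eps)} \rangle$ with $\check{\varphi}(y) := \varphi(-y)$. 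Standard mollification gives that $\psi \ast \check{\varphi}_{j, \omega_j(\eps)} \to \psi$ in $C^\infty(\R^n)$, so their difference converges to $0$ uniformly on $K_0$ together with all derivatives up to order $N_0$; the equicontinuity bound then forces the pairing to vanish uniformly in $t$ as $\eps \to 0$.
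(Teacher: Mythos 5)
Your argument is correct, and it is in the spirit the paper intends: the paper offers no proof of this proposition beyond the phrase ``arguing as in Proposition \ref{prop_nets_asymp}'', i.e.\ it reduces everything to the fixed-$t$ statements of Propositions \ref{prop_reg_nets} and \ref{prop_nets_asymp}. What you add, and what the paper leaves entirely implicit, is the justification that this reduction can be made uniform in $t$: the observation that $\{\partial_t^k f(t,\cdot): t\in[0,T]\}$ is compact in $\E'(\R^n)$, hence equicontinuous (bounded sets in the dual of the barrelled space $C^\infty(\R^n)$ are equicontinuous), so that a single compact set $K_0$, order $N_0$ and constant $C$ serve for all $t$. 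That is exactly the missing ingredient, and both your moderateness estimate and your duality computation $\langle u\ast\varphi,\psi\rangle=\langle u,\check\varphi\ast\psi\rangle$ followed by $\psi\ast\check\varphi_{j,\omega_j(\eps)}\to\psi$ in $C^\infty(\R^n)$ are sound. One cosmetic remark: the paper normalises the mollifier as $\varphi_{\omega(\eps)}(x)=\omega(\eps)^{-1}\varphi(x/\omega(\eps))$ even in dimension $n$, whereas you use $\omega(\eps)^{-n}$; this affects only the exponent in the moderateness bound, not the conclusion.
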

We can now introduce a notion of a `very weak solution' for the Cauchy problem \eqref{CP_wave}. This is similar to the one introduced in \cite{GR:14} but here we measure moderateness and negligibility in terms of $C^\infty$-seminorms rather than in terms of Gevrey-seminorms. We work under the following set of hypotheses:
\begin{itemize}
\item $a_i\in\E'(\R^n)$ with $a_i\ge 0$ for all $i=1,\dots ,n$,
\item $f\in C^\infty([0,T],\E'(\R^n))$,
\item $g_0,g_1\in\E'(\R^n)$.
\end{itemize}

\begin{definition}
\label{def_vws}
The net $(u_\eps)_\eps\in C^\infty([0,T]\times\R^n)^{(0,1]}$ is a very weak solution of the 
Cauchy problem \eqref{CP_wave} if there exist 
\begin{itemize}
\item[(i)] moderate regularisations $(a_{i,\eps})_\eps$ of the coefficients $a_i$ for $i=1,\dots,n$,
\item[(ii)] moderate regularisation $(f_\eps)_\eps$ of the right-hand side $f$,
\item[(iii)] moderate regularisations $(g_{0,\eps})_\eps$ and $(g_{1,\eps})_\eps$ of the initial data $g_0$ and $g_1$, 
respectively,
\end{itemize}
such that $(u_\eps)_\eps$ solves the regularised problem
\[
\begin{split}
\partial_t^2u-\sum_{i=1}^na_{i,\eps}(x)\partial^2_{x_i} u&=f_\eps(t,x),\quad t\in[0,T], x\in\R^n,\\
u(0,x)&=g_{0,\eps},\\
\partial_tu(0,x)&=g_{1,\eps},
\end{split}
\]
for all $\eps\in(0,1]$, and is moderate.
\end{definition}
\begin{definition}
\label{vw_well_posed}
We say that the Cauchy Problem \eqref{CP_wave} is \emph{very weakly well-posed} if a very weak solution exists and it is unique modulo negligible nets, i.e., negligible changes in the regularisation of the equation coefficients and initial data lead to negligible changes in the corresponding very weak solution. \end{definition}
Note that proving the existence of a very weak solution means to prove that there exist suitable mollifiers and $\eps$-scale such that the regularised problem has a moderate solution $(u_\eps)_\eps$. Proving that the Cauchy problem is very weakly well-posed is e\-qui\-va\-lent to prove uniqueness of the solution in the Colombeau algebra $\mathcal{G}([0,T]\times\R^n)$. 

We conclude this section of preliminaries by recalling the proof of the Glaeser's inequality that we will employ in the paper.

\subsection{Glaeser's inequality}
\begin{proposition}[Glaeser's inequality]
\label{prop_Glaeser}
If $a\in C^2(\R^n)$, $a(x)\ge 0$ for all $x\in\R^n$ and 
\[
\sum_{i=1}^n\Vert \partial^2_{x_i}a\Vert_{L^\infty}\le M,
\]
for some constant $M>0$. Then,  
\[
|\partial_{x_i}a(x)|^2\le 2M a(x),
\]
for all $i=1,\dots,n$ and $x\in\R^n$.
\end{proposition}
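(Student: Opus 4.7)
The plan is to reduce the $n$-dimensional statement to a one-variable inequality by freezing all coordinates except the $i$-th, and then to extract the desired bound from the nonnegativity of a Taylor remainder viewed as a quadratic in the increment. There is no genuine obstacle: once the reduction is made, the argument is a short exercise in elementary analysis.

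First I would fix $x\in\R^n$ and $i\in\{1,\dots,n\}$ and introduce the slice $g(t)=a(x+te_i)$, where $e_i$ is the $i$-th standard basis vector. Then $g\in C^2(\R)$, $g\ge 0$, $g(0)=a(x)$, $g'(0)=\partial_{x_i}a(x)$, and $g''(t)=\partial^2_{x_i}a(x+te_i)$, so $|g''(t)|\le\Vert\partial^2_{x_i}a\Vert_{L^\infty}\le M$ for every $t\in\R$. It therefore suffices to prove the one-dimensional version: if $g\in C^2(\R)$, $g\ge 0$, and $|g''|\le M$ on $\R$, then $|g'(0)|^2\le 2Mg(0)$. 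Running this one-variable claim for every $x$ and every coordinate direction then produces the full multi-dimensional statement.

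For the one-dimensional statement, Taylor's theorem yields, for every $h\in\R$, some $\xi_h$ between $0$ and $h$ with
\[
g(h)=g(0)+g'(0)h+\tfrac{1}{2}g''(\xi_h)h^2.
\]
Since $g(h)\ge 0$ and $g''(\xi_h)\le M$, adding the nonnegative quantity $\tfrac{1}{2}(M-g''(\xi_h))h^2$ to both sides produces
\[
g(0)+g'(0)h+\tfrac{M}{2}h^2\ge 0\qquad\text{for all }h\in\R.
\]
If $M>0$, this is a quadratic in $h$ with positive leading coefficient whose nonnegativity on the entire real line forces its discriminant to be nonpositive, i.e.\ $(g'(0))^2-2Mg(0)\le 0$, which is precisely the desired bound. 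If $M=0$, then $g$ is affine and nonnegative on $\R$, which already forces $g'(0)=0$, so the inequality is trivial. The key conceptual step is recognising that testing the Taylor inequality against arbitrary $h$ of either sign is what encodes both halves of $|g'(0)|$ and produces the square-root-type bound characteristic of Glaeser's estimate.
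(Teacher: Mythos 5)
Your proof is correct and follows essentially the same route as the paper's: Taylor expansion along the $i$-th coordinate direction, nonnegativity of the resulting quadratic in the increment $h$, and the discriminant condition. The only cosmetic difference is that you phrase the reduction via the one-variable slice $g(t)=a(x+te_i)$ rather than writing the Taylor expansion directly in $n$ variables, and you additionally treat the degenerate case $M=0$, which the statement's hypothesis $M>0$ excludes anyway.
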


 \begin{proof}
 It is not restrictive to assume that $i=1$. By Taylor expansion of $a$ we have that 
 \[
 0\le a(x_1+h, x_2,\dots,x_n)=a(x)+\partial_{x_1} a(x)h+\frac{1}{2}\partial^2_1 a(x_1+\theta h,x_2,\dots,x_n)h^2,
 \]
 where $\theta\in[0,1]$. It follows that 
 \[
 0\le a(x)+\partial_{x_1}a(x)h+M\frac{h^2}{2},
 \]
 for all $x\in\R^n$ and $h\in\R$, and therefore the corresponding discriminant must be $\le 0$, i.e.,
 \[
(\partial_{x_1}a(x))^2-2a(x)M\le 0,
 \]
 for all $x\in\R^2$. We can therefore conclude that  
 \[
|\partial_{x_i} a(x)|^2\le 2Ma(x),
 \]
 for $i=1,\dots,n$, uniformly in $x\in\R^n$.
 \end{proof}

\section{The case $n=1$}
\label{sec_1}
The strategy adopted in this paper to prove the $C^\infty$ well-posedness of the Cauchy problem \eqref{CP_wave} can be easily understood if we assume that the space dimension is $1$. Note that this is the case considered by Spagnolo and Taglialatela in \cite{ST} for a wider class of $m$-order weakly hyperbolic equations. Let us therefore start by studying the Cauchy problem
\beq
\label{CP_1_ex}
\begin{split}
\partial_t^2u-a(x)\partial^2_{x} u&=f(t,x),\quad t\in[0,T], x\in\R,\\
u(0,x)&=g_0,\\
\partial_tu(0,x)&=g_1,
\end{split}
\eeq
where $a\in C^2(\R)$ is positive and with bounded second order derivative ($\Vert a''\Vert_\infty\le M$). We also assume that all the functions involved in the system are real valued.
\subsection{System in $U$}
By using the transformation,
\[
U=(\partial_{x}u,\partial_tu)^{T}.
\]
the Cauchy problem above can be rewritten as
\[
\begin{split}
\partial_t U=&A \partial_x U+F,\\
U(0,x)&=(g_0',g_1)^{T},
\end{split}
\]
where
\[
A=\left(
	\begin{array}{cc}
	0& 1\\
        a & 0 
	\end{array}
	\right)
	\]
and
\[
	F=\left(
	\begin{array}{c}
	0 \\
	f
        \end{array}
	\right).
	\]
The matrix $A$, which is in Sylvester form, has the symmetriser 
\[
Q=\left(
	\begin{array}{cc}
	a& 0\\
          0 & 1 
	\end{array}
	\right),
\]
i.e., $QA=A^\ast Q=A^tQ$. The symmetriser $Q$ gives us the energy for the system $\partial_t U=A\partial_xU+F$. Let us define
\[
E(t)=(QU,U)_{L^2}.
\]
By direct computations we have that 
\[
E(t)=(aU_1,U_1)_{L^2}+\Vert U_2\Vert^2_{L^2}
\]
and, being $a\ge 0$ the bound from below
\[
\Vert U_2\Vert^2_{L^2}\le E(t)
\]
holds, for all $t\in[0,T]$. Assume that the initial data $g_0, g_1$ are compactly supported and that $f$ is compactly supported with respect to $x$. By finite speed of propagation, it follows that that the solution $U$ is compactly supported with respect to $x$ as well. Hence, by integration by parts we easily obtain the following energy estimate:
 \[
 \begin{split}
 &\frac{dE(t)}{dt}=(\partial_t(QU),U)_{L^2}+(QU,\partial_tU)_{L^2}\\
 &=(Q\partial_tU,U)_{L^2}+(QU, A\partial_{x}U)_{L^2}+(QU,F)_{L^2}\\
 &=(QA\partial_{x}U, U)_{L^2}+(QU, A\partial_{x}U)_{L^2}+2(QU,F)_{L^2}\\
 &= (QA\partial_{x}U,U)_{L^2}+(A^\ast QU,\partial_{x}U)_{L^2}+2(QU,F)_{L^2}\\
 &=(QA\partial_{x}U,U)_{L^2}+(QAU,\partial_{x}U)_{L^2}+2(QU,F)_{L^2}\\
 &=(QA\partial_{x}U,U)_{L^2}-(\partial_{x}(QAU),U)_{L^2}+2(QU,F)_{L^2}\\
 &=-((QA)'U,U)_{L^2}+2(QU,F)_{L^2}.
\end{split}
 \]
Since 
\[
(QA)'=\left(
	\begin{array}{cc}
	0& a'\\
          a' & 0
	\end{array}
	\right)
\]
by Glaeser's inequality ($|a'(x)|^2\le 2M a(x)$) we immediately have
\begin{multline*}
((QA)'U,U)_{L^2}=2(a'U_1,U_2)_{L^2}\le 2\Vert a'U_1\Vert_{L^2}\Vert U_2\Vert_{L^2}\le \Vert a'U_1\Vert_{L^2}^2+\Vert U_2\Vert^2_{L^2}\\
\le 2M(aU_1,U_1)_{L^2}+\Vert U_2\Vert^2_{L^2}
\le \max(2M,1)E(t).
\end{multline*}
Analogously,
\[
2(QU,F)_{L^2}=2(U_2,f)_{L^2}\le E(t)+\Vert f\Vert_{L^2}.
\]
Hence,
\[
\frac{dE}{dt}\le \max(2M,2)E(t)+\Vert f\Vert_{L^2}.
\]
By Gr\"onwall's lemma and the bound from below for the energy we obtain the following estimate for the entry $U_2$:  
\beq
\label{est_U_2}
\Vert U_{2}(t)\Vert_{L^2}^2\le E(t)\le \biggl(E(0)+\int_{0}^t\Vert f(s)\Vert_{L^2}^2\, ds\biggr){\rm e}^{ct},
\eeq
where $c=\max(2M,2)$. In addition,
\[
\begin{split}
\biggl(E(0)+\int_{0}^t\Vert f(s)\Vert_{L^2}^2\, ds\biggr){\rm e}^{ct}
&\le {\rm e}^{cT}\Vert a\Vert_{\infty}\Vert U_1(0)\Vert_{L^2}^2+ {\rm e}^{cT}\Vert U_2(0)\Vert_{L^2}^2 +{\rm e}^{cT}\int_{0}^t\Vert f(s)\Vert_{L^2}^2\, ds\\
&\le C_{2}\biggl(\Vert g_0\Vert_{H^1}^2+\Vert g_1\Vert_{L^2}^2+\int_{0}^t\Vert f(s)\Vert_{L^2}^2\, ds\biggr),
\end{split}
\]
for all $t\in[0,T]$. Note that the constant $C_2$ depends on $\Vert a\Vert_\infty$ and exponentially on $\Vert a''\Vert_\infty.$ Indeed, setting $M=\Vert a''\Vert_\infty$,
\[
C_2= {\rm e}^{cT}\max(\Vert a\Vert_{\infty},1)= {\rm e}^{\max(2\Vert a''\Vert_\infty,2)T}\max(\Vert a\Vert_{\infty},1).
\]
Concluding,
\[
\Vert U_{2}(t)\Vert_{L^2}^2\le C_{2}\biggl(\Vert g_0\Vert_{H^1}^2+\Vert g_1\Vert_{L^2}^2+\int_{0}^t\Vert f(s)\Vert_{L^2}^2\, ds\biggr).
\]
\subsection{System in V}
We want now to obtain a similar estimate for $U_1$. To do so we transform once more the system by deriving with respect to $x$. Let $V=(\partial_{x}U_1, \partial_{x}U_2)^T$. If we get an estimate for $V_2$ we automatically also get it for $U_1$ since $V_2=\partial_t U_1$. Indeed, it will be enough to apply the fundamental theorem of calculus and make use of the initial conditions. Hence, if $U$ solves
\[
\begin{split}
\partial_t U=&A \partial_x U+F,\\
U(0,x)&=(g'_0,g_1)^{T},
\end{split}
\]
then $V$ solves
\[
\begin{split}
\partial_t V=&A \partial_x V+A'V+F_x\\
V(0,x)&=(g_0'',g_1')^{T}.
\end{split}
\]
The system in $V$ has still $A$ as a principal part matrix but it also has lower order terms. It follows that we can still use the symmetriser $Q$ to define the energy. Hence, 
\[
E(t)=(aV_1,V_1)_{L^2}+\Vert V_2\Vert^2_{L^2}
\]
and
 \[
 \begin{split}
 &\frac{dE(t)}{dt}=(\partial_t(QV),V)_{L^2}+(QV,\partial_tV)_{L^2}\\
 &=(Q\partial_tV,V)_{L^2}+(QV, A\partial_{x}V+A'V)_{L^2}+(QV,F_x)_{L^2}\\
 &=(QA\partial_{x}V+QA'V+QF_x, V)_{L^2}+(QV, A\partial_{x}V+A'V)_{L^2}+(QV,F_x)_{L^2}\\
 &= (QA\partial_{x}V,V)_{L^2}+(A^\ast QV,\partial_{x}V)_{L^2}+(QA'V, V)_{L^2}+(QV, A'V)_{L^2}+2(QV,F_x)_{L^2}\\
 &=(QA\partial_{x}V,V)_{L^2}+(QAV,\partial_{x}V)_{L^2}+2(QA'V, V)_{L^2}+2(QV,F_x)_{L^2}\\
 &=-((QA)'V,V)_{L^2}+2(QA'V, V)_{L^2}+2(QV,F_x)_{L^2}.
\end{split}
 \]
By direct computations
 \[
 \begin{split}
 ((QA)'V,V)_{L^2}&=2(a'V_1,V_2)_{L^2},\\
 2(QA'V, V)_{L^2}&=2(a'V_1,V_2)_{L^2}.
 \end{split}
 \]
 Hence,
 \[
 \frac{dE(t)}{dt}=2(QV,F_x)_{L^2}=2(V_2,f_x)\le E(t)+\Vert f_x\Vert_{L^2}^2,
 \]
and
\begin{multline}
\label{est_V_2}
\Vert V_{2}(t)\Vert_{L^2}^2\le E(t)\le \biggl(E(0)+\int_{0}^t\Vert f_x(s)\Vert_{L^2}^2\, ds\biggr)\\
\le \Vert a\Vert_\infty \Vert V_1(0)\Vert_{L^2}^2+\Vert V_2(0)\Vert_{L^2}^2+\int_{0}^t\Vert f_x(s)\Vert_{L^2}^2\, ds\\
\le \max(\Vert a\Vert_\infty ,1)\biggl(\Vert g_0\Vert_{H^2}^2+\Vert g_1\Vert_{H^1}^2+\int_{0}^t\Vert f(s)\Vert_{H^1}^2\, ds\bigg),
\end{multline}
for all $t\in[0,T]$. Now, let us write $V_2$ as $\partial_tU_1$. By the fundamental theorem of calculus we have
\[
  \Vert U_1(t)\Vert^2_{L^2}\le 2\Vert U_1(t)-U_1(0)\Vert_{L^2}^2+2\Vert U_1(0)\Vert_{L^2}^2
 =2\Big\Vert \int_{0}^tV_{2}\, ds\Big\Vert_{L^2}^2+2\Vert U_1(0)\Vert_{L^2}^2\\
 \]
  By Minkowski's integral inequality
  \[
  \Big\Vert \int_{0}^tV_{2}\, ds\Big\Vert_{L^2}\le \int_{0}^t \Vert V_2(s)\Vert_{L^2}ds
  \]
  and therefore
  \[
    \Vert U_1(t)\Vert^2_{L^2}\le 2\biggl(\int_{0}^t \Vert V_{2}(s)\Vert_{L^2}ds\biggr)^2+2\Vert U_1(0)\Vert_{L^2}^2\le 2t^2 \sup_{s\in[0,t]} \Vert V_{2}(s)\Vert^2_{L^2}+2\Vert U_1(0)\Vert_{L^2}^2.
  \]
By the estimate \eqref{est_V_2} we have
\[
\begin{split}
 \Vert U_1(t)\Vert^2_{L^2}&\le 2t^2\max(\Vert a\Vert_\infty ,1)\biggl(\Vert g_0\Vert_{H^2}^2+\Vert g_1\Vert_{H^1}^2+\int_{0}^t\Vert f(s)\Vert_{H^1}^2\, ds\bigg)+2\Vert U_1(0)\Vert_{L^2}^2\\
& \le 2t^2\max(\Vert a\Vert_\infty ,1)\biggl(\Vert g_0\Vert_{H^2}^2+\Vert g_1\Vert_{H^1}^2+\int_{0}^t\Vert f(s)\Vert_{H^1}^2\, ds\bigg)+2\Vert g_0\Vert_{H^1}^2\\
&\le \max\{2T^2,2\}\max(\Vert a\Vert_\infty ,1)\biggl(\Vert g_0\Vert_{H^2}^2+\Vert g_1\Vert_{H^1}^2+\int_{0}^t\Vert f(s)\Vert_{H^1}^2\, ds\bigg),
 \end{split}
\]
for all $t\in[0,T]$. 
\subsection{System in W}
Analogously, if we want to estimate the $L^2$-norm of $V_1$ we need to repeat the same procedure, i.e.,  to derive the system in $V$ with respect to $x$ and introduce $W=(\partial_{x}V_1, \partial_{x}V_2)^T$. We have that if $V$ solves 
\[
\begin{split}
\partial_t V=&A \partial_x V+A'V+F_x,\\
V(0,x)&=(g_0'',g_1')^{T}.
\end{split}
\]
then $W$ solves
\[
\begin{split}
\partial_t W=&A \partial_x W+2A'W+A''V+F_{xx},\\
W(0,x)&=(g_0''',g_1'')^{T}.
\end{split}
\]
Again, by employing the energy $E(t)=(QW, W)_{L^2}$ we have
 \[
 \begin{split}
 &\frac{dE(t)}{dt}=(\partial_t(QW),W)_{L^2}+(QW\partial_tW)_{L^2}\\
 &=(Q\partial_tW,W)_{L^2}+(QW, A\partial_{x}W+2A'W)_{L^2}+(QW,A''V+F_{xx})_{L^2}\\
 &=(QA\partial_{x}W+2QA'W+QA''V+Q F_{xx}, W)_{L^2}+(QW, A\partial_{x}W+2A'W)_{L^2}\\
 &+(QW,A''V+ F_{xx})_{L^2}\\
 &=-((QA)'W,W)_{L^2}+(2QA'W, W)_{L^2}+(QW,2A'W)_{L^2}+2(QW,A''V+ F_{xx})_{L^2}\\
 &=-((QA)'W,W)_{L^2}+4(QA'W, W)_{L^2}+2(QW,A''V+ F_{xx})_{L^2}\\
 &=-2(a'W_1,W_2)_{L^2}+4(a'W_1,W_2)_{L^2}+2(QW, A''V)_{L^2}+2(QW,  F_{xx})_{L^2}\\
 &=2(a'W_1,W_2)_{L^2}+2(W_2, a''V_1)_{L^2}+2(W_2,  F_{xx})_{L^2}.
\end{split}
 \]
 Arguing as above we get
 \beq
 \label{est_E_W_1}
 \frac{dE(t)}{dt}\le \max(2M,2)E(t)+\Vert  F_{xx}\Vert_{L^2}+2(W_2, a''V_1)_{L^2}.
 \eeq
 We need to estimate the term $(W_2, a''V_1)_{L^2}$. By Cauchy-Schwarz we have
 \beq
  \label{est_E_W_2}
 2(W_2, a''V_1)_{L^2}\le \Vert W_2\Vert_{L^2}^2+\Vert a''\Vert_\infty^2\Vert V_1\Vert^2_{L^2}
 \eeq
and since $\partial_t V_1=W_2$ we can write
\begin{multline}
 \label{est_E_W_3}
\Vert V_1\Vert^2_{L^2}=\biggl\Vert \int_0^t \partial_tV_1(s)\, ds+V_1(0)\biggr\Vert^2\le 2\biggl\Vert \int_0^t \partial_tV_1(s)\, ds\biggr\Vert^2_{L^2}+2\Vert V_1(0)\Vert_{L^2}^2\\
\le2\biggl(\int_0^t\Vert W_2(s)\Vert_{L^2}\, ds\biggr)^2+2\Vert V_1(0)\Vert_{L^2}^2
\le 2t^2\int_0^t\Vert W_2(s)\Vert^2_{L^2}\, ds+2\Vert V_1(0)\Vert_{L^2}^2\\
\le 2T^2\int_0^t E(s)\, ds+2\Vert g_0\Vert_{H^2}^2.
\end{multline}
Combining, \eqref{est_E_W_1} with \eqref{est_E_W_2} and  \eqref{est_E_W_3} we obtain
\[
\begin{split}
 \frac{dE(t)}{dt}&\le \max(2M,2)E(t)+\Vert  F_{xx}\Vert_{L^2}+2(W_2, a''V_1)_{L^2}\\
 &\le \max(2M,2)E(t)+\Vert  F_{xx}\Vert_{L^2}+\Vert W_2\Vert_{L^2}^2+\Vert a''\Vert_\infty^2\Vert V_1\Vert^2_{L^2}\\
 &\le  \max(2M,2)E(t)+\Vert  F_{xx}\Vert_{L^2}+\Vert W_2\Vert_{L^2}^2+2T^2\Vert a''\Vert_\infty^2\int_0^t E(s)\, ds+2\Vert a''\Vert_\infty^2\Vert g_0\Vert_{H^2}^2\\
 &\le c'(M, \Vert a''\Vert_\infty^2, T)\biggl(E(t)+\int_0^t E(s)\, ds+\Vert f(s)\Vert_{H^2}^2+\Vert g_0\Vert_{H^2}^2\biggr).
\end{split}
\]
By a simple Gr\"onwall type lemma (see Lemma 6.2 in \cite{ST} or Lemma \ref{lem_ST_G}) we conclude that there exists a constant $C_2'(c',T, \Vert a\Vert_\infty)>0$ such that 
\[
\Vert W_2\Vert_{L^2}^2\le E(t)\le C_2'\biggl(\Vert g_0\Vert_{H^3}^2+\Vert g_1\Vert_{H^2}^2+\int_{0}^t\Vert f(s)\Vert_{H^2}^2\, ds\biggr).
\]
Note that the constant $C_2'$ depends exponentially on $\Vert a''\Vert_{\infty}^2$, $M$ and $T$ and linearly on $\Vert a\Vert_\infty$.
Since $W_2=\partial_t V_1$ by the fundamental theorem of calculus the estimate above can be stated (with a different constant $C_2'$) for $V_1$. 
\subsection{Sobolev estimates} Summarising, we have proven that if $U$ is a solution of the Cauchy problem 
\[
\begin{split}
\partial_t U=&A \partial_x U+F,\\
U(0,x)&=(\partial_{x}g_0,g_1)^{T},
\end{split}
\]
then
\[
\begin{split}
\Vert U_1(t)\Vert_{L^2}^2&\le C_1(T,\Vert a\Vert_\infty)\biggl(\Vert g_0\Vert_{H^2}^2+\Vert g_1\Vert_{H^1}^2+\int_{0}^t\Vert f(s)\Vert_{H^1}^2\, ds\bigg),\\
\Vert U_{2}(t)\Vert_{L^2}^2&\le C_{2}(T,M,\Vert a\Vert_\infty)\biggl(\Vert g_0\Vert_{H^1}^2+\Vert g_1\Vert_{L^2}^2+\int_{0}^t\Vert f(s)\Vert_{L^2}^2\, ds\biggr).\\
\end{split}
\]
It follows that 
\[
\Vert U(t)\Vert_{L^2}^2\le C_0\biggl(\Vert g_0\Vert_{H^2}^2+\Vert g_1\Vert_{H^1}^2+\int_{0}^t\Vert f(s)\Vert_{H^1}^2\, ds\biggr).
\]
Passing now to the system in $V$ we have proven that 
\[
\begin{split}
\Vert V_1(t)\Vert_{L^2}^2&\le C(T,M,M^2, \Vert a\Vert_\infty)\biggl(\Vert g_0\Vert_{H^3}^2+\Vert g_1\Vert_{H^2}^2+\int_{0}^t\Vert f(s)\Vert_{H^2}^2\, ds\biggr),\\
\Vert V_{2}(t)\Vert_{L^2}^2&\le \max(\Vert a\Vert_\infty ,1)\biggl(\Vert g_0\Vert_{H^2}^2+\Vert g_1\Vert_{H^1}^2+\int_{0}^t\Vert f(s)\Vert_{H^1}^2\, ds\biggr),\\
\end{split}
\]
and therefore there exists a constant $C_1>0$, with dependence on $T$, $M$, $M^2$ and $\Vert a\Vert_\infty$ as above, such that
\[
\Vert U(t)\Vert_{H^1}^2\le C_1\biggl(\Vert g_0\Vert_{H^3}^2+\Vert g_1\Vert_{H^2}^2+\int_{0}^t\Vert f(s)\Vert_{H^2}^2\, ds\biggr).
\]
This immediately gives the estimates
\[
\Vert u(t)\Vert_{H^{k+1}}^2\le C_k\biggl(\Vert g_0\Vert_{H^{k+2}}^2+\Vert g_1\Vert_{H^{k+1}}^2+\int_0^t \Vert f(s)\Vert_{H^{k+1}}^2\, ds\biggl),
\]
for all $t\in[0,T]$ and $k=-1,0,1$, where $C_k$ depends on $T$, $M$, $M^{k+1}$ and $\Vert a\Vert_\infty$.

\subsection{Conclusion} We have proven $H^k$-Sobolev well-posedness for the Cauchy problem \eqref{CP_1_ex} for $k=0,1,2$, provided that $a\ge 0$ is of class $C^2$ with bounded derivatives up to order $2$. Existence of the solution is obtained via a standard perturbation argument on the strictly hyperbolic case (see \cite{ST} and the proof of Theorem \ref{theo_main}) and the uniqueness follows from the estimates above. Clearly, one can iterate this argument and obtain Sobolev estimates for every order $k$. The iteration will involve further derivatives of the coefficient $a$, namely up to order $k+1$ and therefore to get well-posedness in every Sobolev space we require that $a$ is smooth and has bounded derivatives of any order.

\section{$C^\infty$ well-posedness in arbitrary space dimension}
\label{sec_CP_smooth} 
In this section we study the Cauchy problem \eqref{CP_wave} 
\[
\begin{split}
\partial_t^2u-\sum_{i=1}^na_i(x)\partial^2_{x_i} u&=f(t,x),\quad t\in[0,T], x\in\R^n,\\
u(0,x)&=g_0,\\
\partial_tu(0,x)&=g_1,
\end{split}
\]
when the coefficients $a_i$ are regular and positive on $\R^n$, i.e., $a_i\in B^\infty([0,T]\times\R^n)$ and $a_i\ge 0$ for all $i=1,\dots,n$.  All the functions above are assumed to be real-valued. We give an alternative proof of the $C^\infty$ well-posedness result obtained in \cite{O70}. Our strategy consists in transforming \eqref{CP_wave} into a system of first order differential equations and then to employ the symmetriser of this system in order to get energy and energy estimates. Note that since we want to employ integration by parts it is convenient to work with differential operators rather than pseudo-differential operators. This means that we cannot use the standard reduction of a second order scalar differential equation into a $2\times 2$ system of pseudodifferential equations as in \cite{GR:11, GR:12, GR:14}. As a consequence, the size of the our system matrix will not be $2$ but it will depend on the spatial dimension $n$.
\subsection{Reduction into a system and construction of the symmetriser}
We transform the Cauchy problem above into a first order system of differential equations of size $n+1$. Let
\[
U=(\partial_{x_1}u,\partial_{x_2}u,\cdots, \partial_{x_n}u, \partial_tu)^{T}.
\]
The Cauchy problem above is equivalent to the $n+1\times n+1$ system
\beq
\label{CP1_syst_inh}
\begin{split}
\partial_tU&=\sum_{k=1}^nA_k(x)\partial_{x_k} U+F,\\
U(0,x)&=(\partial_{x_1}g_0,\partial_{x_2}g_0,\cdots, \partial_{x_n}g_0, g_1)^{T},
\end{split}
\eeq
where, $A_k=(a_{k,ij})_{ij}$ with
\[
\begin{split}
a_{k,ij}&=1,\quad\text{for $i=k$ and $j=n+1$},\\
a_{k,ij}&=a_k,\quad\text{for $i=n+1$ and $j=k$},\\
a_{k,ij}&=0,\quad\text{otherwise},
\end{split}
\]
and
	\[
	F=\left(
	\begin{array}{c}
	0 \\
	0 \\
 
        \vdots\\
        0\\
      f
        \end{array}
	\right).
	\]
Note that each matrix $A_k$ can be regarded as a matrix in Sylvester form with $n-1$ rows and columns identically zero. Indeed, when $n=2$ we have
\[
A_1=\left(
	\begin{array}{ccc}
	0& 0 & 1\\
	0& 0 & 0\\ 
	a_1 & 0 & 0
	\end{array}
	\right)
\]
and
\[
A_2=\left(
	\begin{array}{ccc}
	0& 0 & 0\\
	0& 0 & 1\\ 
	0 & a_2 & 0
	\end{array}
	\right).
\]
and when $n=3$ we have
\[
A_1=\left(
	\begin{array}{cccc}
	0& 0 & 0 & 1\\
	0& 0 & 0 & 0\\
	0& 0 & 0 & 0\\
	a_1 & 0 & 0 & 0
	\end{array}
	\right),\quad A_2=\left(
	\begin{array}{cccc}
	0& 0 & 0 & 0\\
	0& 0 & 0 & 1\\
	0& 0 & 0 & 0\\
	0 & a_2 & 0 & 0
	\end{array}
	\right), \quad A_3=\left(
	\begin{array}{cccc}
	0& 0 & 0 & 0\\
	0& 0 & 0 & 0\\
	0 & 0 & 0 & 1\\
	0 & 0 & a_3 & 0
	\end{array}
	\right).
\]
It is possible to construct a common $Q=Q_n$ symmetriser for all the matrices $A_k$, $k=1,\dots,n$. For the sake of simplicity, in the sequel we will omit the subscript $n$ in the notation $Q_n$ of the symmetriser. 
\begin{proposition}
\label{prop_sym_wave}
\leavevmode
\begin{itemize}
\item[(i)] The diagonal $n+1\times n+1$ matrix
\[
Q=\left(
	\begin{array}{cccc}
	a_1& 0 & \cdots & 0\\
	0& a_2 & \cdots & 0\\
	\vdots& \vdots & \vdots & \vdots\\
	\cdots& \cdots & a_n & 0\\
	0 & \cdots &0 & 1
	\end{array}
	\right)
\]
is a symmetriser for every matrix $A_k$ with $k=1,\dots, n$, i.e. $QA_k=A_k^\ast Q$. Moreover, $QA_k$ has $ij$-entry and $ji$-entry equal to $a_k$ for $i=k$, $j=n+1$ and it is identically zero otherwise.  
\item[(ii)] If $a_k\ge 0$ for all $k=1,\dots,n$ then $\lara{Qv,v}\ge |v_{n+1}|^2$ for all $v\in\R^{n+1}$.
\end{itemize}
\end{proposition}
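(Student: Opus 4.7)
The plan is a direct matrix computation exploiting the extreme sparsity of each $A_k$. For part (i), observe that $A_k$ has only two nonzero entries, namely $(A_k)_{k,n+1}=1$ and $(A_k)_{n+1,k}=a_k(x)$, while $Q$ is diagonal with diagonal entries $(a_1,\dots,a_n,1)$. Since $Q$ is diagonal, left-multiplication by $Q$ rescales the $i$-th row of $A_k$ by $Q_{ii}$, so the only nonzero entries of $QA_k$ are $(QA_k)_{k,n+1}=a_k\cdot 1=a_k$ and $(QA_k)_{n+1,k}=1\cdot a_k=a_k$. This is exactly the description claimed, and the resulting matrix is real symmetric. Because $Q$ is real and self-adjoint (being diagonal with real entries), we have $A_k^\ast Q=(QA_k)^\ast=QA_k$, which is the symmetriser identity $QA_k=A_k^\ast Q$.

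Part (ii) is just an expansion of the quadratic form: for $v=(v_1,\dots,v_{n+1})\in\R^{n+1}$,
\[
\langle Qv,v\rangle=\sum_{i=1}^n a_i(x)|v_i|^2+|v_{n+1}|^2\ge |v_{n+1}|^2,
\]
where the inequality uses the standing hypothesis $a_i(x)\ge 0$ to discard the first (nonnegative) sum.

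There is no serious obstacle here; the proposition is structural bookkeeping. The conceptual value, and the reason it is worth isolating as a lemma, is that the \emph{same} diagonal $Q$ simultaneously symmetrises all $n$ coefficient matrices $A_k$, which is what allows a single energy functional $E(t)=(QU,U)_{L^2}$ to be used in the higher-dimensional analogue of Section \ref{sec_1}. The lower bound in (ii) will later play the role of the $L^2$-control on $U_2$ used in the one-dimensional case, giving control of the time-derivative component $U_{n+1}=\partial_tu$, while the explicit form of $QA_k$ in (i) makes the integration-by-parts step in the $dE/dt$ computation produce only the off-diagonal coupling $a_k'$ between $U_k$ and $U_{n+1}$, which is exactly the combination to which Glaeser's inequality (Proposition \ref{prop_Glaeser}) applies.
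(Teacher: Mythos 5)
Your proof is correct and follows essentially the same route as the paper: a direct entrywise computation of $QA_k$ using the diagonality of $Q$ and the two nonzero entries of $A_k$, followed by the trivial expansion of the quadratic form in (ii). The only (welcome) addition is that you make explicit the step $A_k^\ast Q=(QA_k)^\ast=QA_k$ from the symmetry of $QA_k$, which the paper leaves implicit.
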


\begin{proof}

(i) The $ih$-entry of the product $QA_k$ is given by $\sum_{j=1}^{n+1}q_{ij}a_{k,jh}$. Since $Q$ is diagonal it follows that this sum can be reduced to $q_{ii}a_{k,ih}$. Hence the only non-zero entries are obtained for $i=n+1$ and $h=k$ and for $i=k$ and $h=n+1$. In both cases we obtain that the entry of the product is equal to $a_k$.\\
(ii) Since the coefficients $a_k$ are non-negative it follows by direct computations that $\lara{Qv,v}=\sum_{k=1}^n a_kv_k^2+|v_{n+1}|^2\ge |v_{n+1}|^2,$ for all $v\in\R^{n+1}$.
\end{proof}
We define the energy 
\beq
\label{EU1}
E(t)=(QU,U)_{L^2}=\sum_{i=1}^n (a_iU_i,U_i)_{L^2}+\Vert U_{n+1}\Vert^2_{L^2},
\eeq
where $L^2=L^2(\R^n_x)$. For the sake of simplicity we keep writing $L^2$ even when we are considering its $n$-product like in $(QU,U)_{L^2}$. It follows immediately from Proposition \ref{prop_sym_wave}(ii) that 
\beq
\label{bfb}
\Vert U_{n+1}\Vert_{L^2}^2\le E(t)\le \max_{i=1,\dots,n}\{\Vert a_i\Vert_{L^\infty}\}\sum_{i=1}^n\Vert U_i\Vert_{L^2}^2+\Vert U_{n+1}\Vert^2_{L^2}.
\eeq
Note that since the equation generating our system is weakly hyperbolic we cannot bound the energy from below with the $L^2$-norm of $U$ but with the $L^2$-norm of its last component.
Our plan is to estimate the energy $E(t)$ and prove in this way that our Cauchy problem is $C^\infty$ well-posed (or equivalently well-posed in every Sobolev space). We start by proving $L^2$-estimate for $U$ and we then pass to any Sobolev order.

\subsection{$L^2$-estimates of $U$.}
We will first focus on the component $U_{n+1}$ and we will then pass to consider $U_i$ with $i=1,\dots, n$.

\subsubsection{$L^2$-estimates of $U_{n+1}$}
In the sequel we use the fact that our system is of dif\-fe\-rential operators rather than pseudo-differential operators and we apply integration by parts. We assume that $g_0$, $g_1$ and $f(t,\cdot)$ are compactly supported so by finite speed propagation we can assume that $U=U(t,\cdot)$ is compactly supported as well. Hence,
 \[
 \begin{split}
 &\frac{dE(t)}{dt}=(\partial_t(QU),U)_{L^2}+(QU,\partial_tU)_{L^2}\\
 &=(Q\partial_tU,U)_{L^2}+(QU, \sum_{k=1}^nA_k\partial_{x_k}U)_{L^2}+(QU,F)_{L^2}\\
 &=(Q\sum_{k=1}^nA_k\partial_{x_k}U, U)_{L^2}+(QU, \sum_{k=1}^nA_k\partial_{x_k}U)_{L^2}+2(QU,F)_{L^2}\\
 &= \sum_{k=1}^n \biggl((QA_k\partial_{x_k}U,U)_{L^2}+(A_k^\ast QU,\partial_{x_k}U)_{L^2}\biggr)+2(QU,F)_{L^2}\\
 &=\sum_{k=1}^n \biggl((QA_k\partial_{x_k}U,U)_{L^2}+(QA_kU,\partial_{x_k}U)_{L^2}\biggr)+2(QU,F)_{L^2}\\
 &=\sum_{k=1}^n \biggl((QA_k\partial_{x_k}U,U)_{L^2}-(\partial_{x_k}(QA_kU),U)_{L^2}\biggr)+2(QU,F)_{L^2}\\
 &=-\sum_{k=1}^n(\partial_{x_k}(QA_k)U,U)_{L^2}+2(QU,F)_{L^2}.
\end{split}
 \]
 Our aim is to estimate $-\sum_{k=1}^n(\partial_{x_k}(QA_k)U,U)_{L^2}+2(QU,F)_{L^2}$ with the energy $E(t)$. This is possible thanks to the Glaeser's inequality (Proposition \ref{prop_Glaeser}) and the fact that, by direct computations,
\beq
\label{formula_E}
\begin{split}
(\partial_{x_k}(QA_k)U,U)_{L^2}&=2(\partial_{x_k}a_k U_k, U_{n+1})_{L^2},\\
 (QU,F)_{L^2}&=(U_{n+1},f)_{L^2}.
 \end{split}
\eeq
\begin{proposition}
\label{prop_Energy_1}
Let 
\[
E(t)=(QU,U)_{L^2}=\sum_{i=1}^n (a_iU_i,U_i)_{L^2}+\Vert U_{n+1}\Vert^2_{L^2}
\]
be the Energy of the system \eqref{CP1_syst_inh}. Assume that
\begin{itemize}
\item[(H1)] the coefficients $a_i$ are bounded and $a_i\ge 0$ for all $i=1,\dots,n$;
\item[(H2)] the coefficients $a_i$ are of class $C^2$ with bounded second order derivatives, i.e., $\exists M>0$ such that
\[
\sum_{j=1}^n\Vert \partial^2_{x_j}a_i\Vert_{L^\infty}\le M,
\]
for all $x\in\R^n$, for all $i=1,\dots,n$.
\end{itemize}
Then, 
\begin{itemize}
\item[(i)] there exists a constant $c=c(n,M)>0$ such that 
\beq
\label{est_dE}
\frac{dE(t)}{dt}\le cE(t)+\Vert f(t)\Vert_{L^2}^2,
\eeq
for all $t\in[0,T]$;
\item[(ii)] There exists a constant $C=C_{n+1}(n,M,\max_{i=1,\dots,n}\Vert a_i\Vert_{\infty},T)>0$ such that 
\beq
\label{est_U_n+1}
\Vert U_{n+1}(t)\Vert_{L^2}^2\le C_{n+1}\biggl(\Vert g_0\Vert_{H^1}^2+\Vert g_1\Vert_{L^2}^2+\int_{0}^t\Vert f(s)\Vert_{L^2}^2\, ds\biggr)
\eeq
for all $t\in[0,T]$.  
\end{itemize}
\end{proposition}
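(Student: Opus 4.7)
\medskip

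\textbf{Proof proposal.} The plan is to start from the identity for $\frac{dE(t)}{dt}$ already derived just before the statement, namely
\[
\frac{dE(t)}{dt}=-\sum_{k=1}^n(\partial_{x_k}(QA_k)U,U)_{L^2}+2(QU,F)_{L^2},
\]
and substitute the explicit formulas \eqref{formula_E}. This reduces the entire right-hand side to two scalar pairings:
\[
\frac{dE(t)}{dt}=-2\sum_{k=1}^n(\partial_{x_k}a_k\,U_k,U_{n+1})_{L^2}+2(U_{n+1},f)_{L^2}.
\]
I then want to absorb each of these terms into a multiple of $E(t)$ plus the forcing term.

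For part (i), I treat each summand via Cauchy--Schwarz and the elementary AM--GM estimate $2|ab|\le a^2+b^2$:
\[
2|(\partial_{x_k}a_k\,U_k,U_{n+1})_{L^2}|\le \|\partial_{x_k}a_k\,U_k\|_{L^2}^2+\|U_{n+1}\|_{L^2}^2.
\]
The crucial step is Glaeser's inequality (Proposition \ref{prop_Glaeser}): under hypothesis (H2) it yields $|\partial_{x_k}a_k(x)|^2\le 2Ma_k(x)$ pointwise, and hence
\[
\|\partial_{x_k}a_k\,U_k\|_{L^2}^2\le 2M(a_k U_k,U_k)_{L^2}.
\]
Summing over $k$ bounds the first block by $2M\sum_{k=1}^n(a_k U_k,U_k)_{L^2}+n\|U_{n+1}\|_{L^2}^2\le \max(2M,n)E(t)$ thanks to the definition \eqref{EU1}. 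The second block, $2(U_{n+1},f)_{L^2}$, is handled in the same way to produce $\|U_{n+1}\|_{L^2}^2+\|f(t)\|_{L^2}^2\le E(t)+\|f(t)\|_{L^2}^2$. Adding these gives \eqref{est_dE} with $c=\max(2M,n)+1$, depending only on $n$ and $M$.

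For part (ii), I apply Gr\"onwall's lemma to the differential inequality \eqref{est_dE}, obtaining
\[
E(t)\le\Bigl(E(0)+\int_0^t\|f(s)\|_{L^2}^2\,ds\Bigr)\mathrm{e}^{cT},\qquad t\in[0,T].
\]
The lower bound $\|U_{n+1}(t)\|_{L^2}^2\le E(t)$ from \eqref{bfb} on the left, together with the upper bound
\[
E(0)\le \max_{i}\|a_i\|_{L^\infty}\sum_{i=1}^n\|\partial_{x_i}g_0\|_{L^2}^2+\|g_1\|_{L^2}^2\le \max\bigl(\max_i\|a_i\|_{L^\infty},1\bigr)\bigl(\|g_0\|_{H^1}^2+\|g_1\|_{L^2}^2\bigr)
\]
on the right, yields \eqref{est_U_n+1} with $C_{n+1}=\mathrm{e}^{cT}\max(\max_i\|a_i\|_{L^\infty},1)$.

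The only genuinely nontrivial ingredient is the application of Glaeser's inequality; everything else is Cauchy--Schwarz, AM--GM, and Gr\"onwall. One preliminary remark: the computation of $\frac{dE(t)}{dt}$ in the text used integration by parts, so I should first invoke finite speed of propagation for the smooth strictly hyperbolic approximations, or work under the assumption that $g_0,g_1,f(t,\cdot)$ are compactly supported, so that $U(t,\cdot)$ has compact support in $x$ and the boundary terms vanish. This is exactly the setup already adopted in the $n=1$ discussion of Section \ref{sec_1}, and no new obstacle arises here.
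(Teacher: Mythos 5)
Your proposal is correct and follows essentially the same route as the paper: the identity for $\frac{dE}{dt}$ from integration by parts, the formulas \eqref{formula_E}, Cauchy--Schwarz plus $2ab\le a^2+b^2$, Glaeser's inequality to absorb $\Vert\partial_{x_k}a_k U_k\Vert_{L^2}^2$ into $2M(a_kU_k,U_k)_{L^2}$, and then Gr\"onwall together with the lower bound \eqref{bfb} for part (ii). The only difference is a cosmetic one in the constant ($\max(2M,n)+1$ versus the paper's $\max(2M,n+1)$), and your preliminary remark on compact support matches the assumption the paper makes explicitly before differentiating the energy.
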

\begin{proof}
From \eqref{formula_E} and Cauchy-Schwarz inequality we have that 
\[
\begin{split}
\frac{dE(t)}{dt}&\le  2\sum_{k=1}^n\Vert \partial_{x_k}a_kU_k\Vert_{L^2}\Vert U_{n+1}\Vert_{L^2}+2\Vert f\Vert_{L^2}\Vert U_{n+1}\Vert_{L^2}\\
&\le\sum_{k=1}^n\Vert \partial_{x_k}a_kU_k\Vert^2_{L^2}+n\Vert U_{n+1}\Vert^2_{L^2}+\Vert f\Vert^2_{L^2}+\Vert U_{n+1}\Vert^2_{L^2}\\
&=\sum_{k=1}^n\Vert \partial_{x_k}a_kU_k\Vert^2_{L^2}+(n+1)\Vert U_{n+1}\Vert^2_{L^2}+\Vert f\Vert^2_{L^2}.
\end{split}
\]
We now write $\Vert \partial_{x_k}a_kU_k\Vert^2_{L^2}$ as
\[
(\partial_{x_k}a_kU_k,\partial_{x_k}a_kU_k)_{L^2}=((\partial_{x_k}a_k)^2U_k,U_k)_{L^2}.
\]
By Glaeser's inequality  ($|\partial_{x_k}a_k(x)|^2\le 2Ma_k(x)$) we obtain the estimate
\[
 \Vert \partial_{x_k}a_kU_k\Vert^2_{L^2}\le 2M(a_kU_k,U_k)_{L^2}. 
\]
Thus
\[
\begin{split}
\frac{dE(t)}{dt}&\le 2M\sum_{k=1}^n(a_kU_k,U_k)_{L^2}+(n+1)\Vert U_{n+1}\Vert^2_{L^2}+\Vert f\Vert^2_{L^2}\\
&\le \max\{2M, n+1\} E(t)+\Vert f\Vert^2_{L^2}.
\end{split}
\]
This proves assertion (i) with $c=\max\{2M, n+1\}$. By now combining the bound from below \eqref{bfb} with Gr\"onwall's lemma we get  
\beq
\label{est_U_n+1}
\begin{split}
\Vert U_{n+1}(t)\Vert_{L^2}^2\le E(t)&\le \biggl(E(0)+\int_{0}^t\Vert f(s)\Vert_{L^2}^2\, ds\biggr){\rm e}^{ct}\\
&\le {\rm e}^{cT}\max_{i=1,\dots,n}\Vert a_i\Vert_{\infty}\sum_{i=1}^{n+1}\Vert U_i(0)\Vert_{L^2}^2+{\rm e}^{cT}\int_{0}^t\Vert f(s)\Vert_{L^2}^2\, ds\\
&\le C_{n+1}\biggl(\Vert g_0\Vert_{H^1}^2+\Vert g_1\Vert_{L^2}^2+\int_{0}^t\Vert f(s)\Vert_{L^2}^2\, ds\biggr)\
\end{split}
\eeq
for all $t\in[0,T]$, where the existence of $C_{n+1}=C_{n+1}(n,M,\max_{i=1,\dots,n}\Vert a_i\Vert_{\infty},T)>0$ is clear from the estimates above.
\end{proof}
\subsubsection{$L^2$-estimates of $U_i$ when $i\neq n+1$} To be able to estimate the entries $U_i$ of $U$ when $i\neq n+1$ we need to transform the system \eqref{CP1_syst_inh}. We begin by noting that if $U$ solves 
\[
\partial_t U=\sum_{k=1}^nA_k(x)\partial_{x_k} U+F
\]
then, for all $i=1,\dots,n$
\[
\partial_t\partial_{x_i}U=\sum_{k=1}^n A_k(x)\partial_{x_k}\partial_{x_i}U+\sum_{k=1}^n\partial_{x_i}A_k(x)\partial_{x_k} U+\partial_{x_i}F.
\]
Let $V=(\partial_{x_1}U, \partial_{x_2}U,\dots,\partial_{x_n}U)^T$. This is a $n(n+1)$-column vector. We immediately see that if $U$ solves \eqref{CP1_syst_inh} then $V$ solves the Cauchy problem

\beq
\label{CP1_V}
\begin{split}
\partial_t V&=\sum_{k=1}^n\wt{A_k}(x)\partial_{x_k} V+\wt{B}V+\wt{F},\\
V(0,x)&=(\partial_{x_1}U(0,x),\partial_{x_2}U(0,x),\dots,\partial_{x_n}U(0,x))^T,
\end{split}
\eeq
where the matrices involved have size $n(n+1)\times n(n+1)$ and the following structure:
\[
\wt{A_k}=\left(
	\begin{array}{cccc}
	A_k & 0 & \cdots & 0\\
	0 & A_k & \cdots & 0\\
	\vdots & \vdots & \vdots & \vdots\\
	0 & 0 & \cdots & A_k
	\end{array}
	\right),
\]
for $k=1,\dots,n$,
\[
\wt{B}=\left(
	\begin{array}{ccccc}
	\partial_{x_1}A_1 & \partial_{x_1}A_2 & \cdots & \cdots & \partial_{x_1}A_n\\
	\partial_{x_2}A_1& \partial_{x_2}A_2 & \cdots & \cdots & \partial_{x_2}A_n\\
	\vdots & \vdots & \vdots & \vdots & \vdots\\
	\partial_{x_k}A_1&  \cdots &\partial_{x_k}A_k & \cdots & \partial_{x_k}A_n\\
	\vdots & \vdots & \vdots & \vdots & \vdots\\
	\partial_{x_n}A_1& \partial_{x_n}A_2 & \cdots & \cdots & \partial_{x_n}A_n\\
	 
	\end{array}
	\right)
\]
and
\[
\wt{F}=\left(
	\begin{array}{c}
	\partial_{x_1}F\\
	\partial_{x_2}F\\
	\vdots\\
	\partial_{x_n}F\\
	\end{array}
	\right).
\]
Let 
\[
\wt{Q}=\left(
	\begin{array}{ccccc}
	Q & 0 &\cdots & 0 & 0\\
	0 & Q & \cdots & 0 & 0\\
	\vdots & \vdots & \vdots & \vdots & \vdots\\
	0 & 0 & \cdots & Q & 0\\
	0 & 0 &\cdots & \cdots & Q
	\end{array}
	\right),
\]
be a block diagonal matrix with $n$ identical blocks equal to the symmetriser 
\[
Q=\left(
	\begin{array}{cccc}
	a_1& 0 & \cdots & 0\\
	0& a_2 & \cdots & 0\\
	\vdots& \vdots & \vdots & \vdots\\
	\cdots& \cdots & a_n & 0\\
	0 & \cdots &0 & 1
	\end{array}
	\right).
\]
By construction $\wt{Q}$ is a symmetriser of $\wt{A_k}$ for every $k=1,\dots, n$, i.e., $\wt{Q}\wt{A_k}={\wt{A_k}}^\ast \wt{Q}$. We now define the energy
\beq
\label{Energy_V}
\begin{split}
E(t)&=(\wt{Q}V,V)_{L^2}\\
&=\sum_{i=1}^n (a_iV_i,V_i)_{L^2}+\Vert V_{n+1}\Vert_{L^2}^2+\sum_{k=1}^{n-1}\biggl(\sum_{i=1}^n (a_i V_{k(n+1)+i}, V_{k(n+1)+i})_{L^2}+\Vert V_{(k+1)(n+1)}\Vert_{L^2}^2\biggr)\\
&=\sum_{k=0}^{n-1}\biggl(\sum_{i=1}^n (a_i V_{k(n+1)+i}, V_{k(n+1)+i})_{L^2}+\Vert V_{(k+1)(n+1)}\Vert_{L^2}^2\biggr)
\end{split}
\eeq
Moreover,
\beq
\label{Energy_V_est}
\sum_{k=0}^{n-1}\Vert V_{(k+1)(n+1)}\Vert_{L^2}^2\le E(t)\le \sum_{k=0}^{n-1}\biggl(\sum_{i=1}^n \Vert a_i\Vert_{L^\infty}\Vert V_{k(n+1)+i}\Vert_{L^2}^2+\Vert V_{(k+1)(n+1)}\Vert_{L^2}^2\biggr).
\eeq
Note that \eqref{Energy_V_est} implies
\beq
\label{Energy_V_est_2}
\sum_{k=0}^{n-1}\Vert V_{(k+1)(n+1)}\Vert_{L^2}^2\le E(t)\le \max_{i=1,\dots,n}\Vert a_i\Vert_{L^\infty}\sum_{k=0}^{n-1}\sum_{i=1}^n \Vert V_{k(n+1)+i}\Vert_{L^2}^2+\sum_{k=0}^{n-1}\Vert V_{(k+1)(n+1)}\Vert_{L^2}^2.
\eeq
which is analogous to the estimate \eqref{bfb}. 

\begin{example}
An an explanatory example we write down the transformation from the system in $U$ into the system in $V$ when $n=2$. We start from 
\[
\begin{split}
\partial_tU&=A_1(x)\partial_{x_1} U+A_2(x)\partial_{x_2} U+F\\
U(0,x)&=(\partial_{x_1}g_0,\partial_{x_2}g_0,g_1)^{T},
\end{split}
\]
where  
\[
A_1=\left(
	\begin{array}{ccc}
	0& 0 & 1\\
	0& 0 & 0\\ 
	a_1 & 0 & 0
	\end{array}
	\right),
\]
\[
A_2=\left(
	\begin{array}{ccc}
	0& 0 & 0\\
	0& 0 & 1\\ 
	0 & a_2 & 0
	\end{array}
	\right)
\]
and 
\[
F=\left(
	\begin{array}{c}
	0\\
	0\\
	f
	\end{array}
	\right).
\]
Thus, $V=(\partial_{x_1}U,\partial_{x_2}U)^T$ is an element of $\R^6$ which transforms the Cauchy problem above into    
\[
 \partial_t V=\left(
	\begin{array}{cc}
	A_1 & 0\\
	0 & A_1
	\end{array}
	\right)\partial_{x_1}V+\left(
	\begin{array}{cc}
	A_2 & 0\\
	0 & A_2
	\end{array}
	\right)\partial_{x_2}V+ \left(
	\begin{array}{cc}
	\partial_{x_1}A_1 & \partial_{x_1}A_2\\
	\partial_{x_2}A_1 & \partial_{x_2}A_2
	\end{array}
	\right)V+\left(
	\begin{array}{c}
	\partial_{x_1}F\\
	\partial_{x_2}F
	\end{array}
	\right),
\]
with initial data $V(0)=(\partial_{x_1}U(0), \partial_{x_2}U(0))^T$. The matrices involved have size $6\times 6$. Finally, the symmetriser
\[
\wt{Q}=\left(
	\begin{array}{cc}
	Q & 0\\
	0 & Q
	\end{array}
	\right),
\]
generates the energy
\beq
\label{E_V}
\begin{split}
 E(t)&=(\wt{Q}V,V)_{L^2}\\
 &=(a_1V_1,V_1)_{L^2}+(a_2V_2,V_2)_{L^2}+\Vert V_3\Vert_{L^2}^2+(a_1V_4,V_4)_{L^2}+(a_2V_5,V_5)_{L^2}+\Vert V_6\Vert_{L^2}^2.
 \end{split}
 \eeq
 \end{example}
 We now go back to the Cauchy Problem \eqref{CP1_V} and we estimate the $L^2$-norm of the components $V_{(k+1)(n+1)}$ of the vector $V$ for $k=0,\dots,n-1$. The first step is to differentiate and estimate the energy. Arguing as before for the system in $U$ and the corresponding energy, we obtain the following:  
 \beq
\label{energy_V_dt}
\begin{split}
 &\frac{dE(t)}{dt}=(\partial_t \wt{Q}V,V)_{L^2}+(\wt{Q}V,\partial_tV)_{L^2}\\
 &=(\wt{Q}\partial_tV,V)_{L^2}+(\wt{Q}V, \sum_{k=1}^n\wt{A_k}(x)\partial_{x_k} V+\wt{B}V+\wt{F})_{L^2}\\
 &=(\wt{Q}\sum_{k=1}^n\wt{A_k}\partial_{x_k}V,V)_{L^2}+(\wt{Q}V, \sum_{k=1}^n\wt{A_k}(x)\partial_{x_k} V)_{L^2}+(\wt{Q}\wt{B}V,V)_{L^2}+(\wt{Q}V,\wt{B}V)_{L^2}\\
 &+(\wt{Q}\wt{F},V)_{L^2}+(\wt{Q}V,\wt{F})_{L^2}\\
 &=\sum_{k=1}^n (\wt{Q}\wt{A_k}\partial_{x_k}V,V)_{L^2}+\sum_{k=1}^n (\wt{A_k}^\ast\wt{Q}V,\partial_{x_k}V)_{L^2}+((\wt{Q}\wt{B}+\wt{B}^\ast\wt{Q})V,V)_{L^2}+2(\wt{Q}V,\wt{F})_{L^2}\\
 &=-\sum_{k=1}^n(\partial_{x_k}(\wt{A_k}^\ast\wt{Q})V,V)_{L^2}+((\wt{Q}\wt{B}+\wt{B}^\ast\wt{Q})V,V)_{L^2}+2(\wt{Q}V,\wt{F})_{L^2}\\
 &=-\sum_{k=1}^n(\partial_{x_k}(\wt{Q}\wt{A_k})V,V)_{L^2}+((\wt{Q}\wt{B}+\wt{B}^\ast\wt{Q})V,V)_{L^2}+2(\wt{Q}V,\wt{F})_{L^2}
\end{split}
\eeq
Note that formally we have the same kind of mathematical expression obtained for the energy of $U$ but here the matrices involved are different in term of size and entries and lower order terms appear as well.
\begin{proposition}
\label{prop_terms_en_V}
By definition of the matrices $\wt{Q}$, $\wt{A_k}$, $k=1,\dots,n$, $\wt{B}$ and $\wt{F}$ we have that 
\begin{itemize}
\item[(i)] $(\partial_{x_k}(\wt{Q}\wt{A_k})V,V)_{L^2}=2\sum_{j=0}^{n-1} (\partial_{x_k}a_kV_{k+j(n+1)}, V_{(j+1)(n+1)})_{L^2}$,
\item[(ii)] $((\wt{Q}\wt{B}+\wt{B}^\ast\wt{Q})V,V)_{L^2}=2\sum_{k=1}^n\sum_{j=1}^n(\partial_{x_k}a_jV_{j+(j-1)(n+1)},V_{k(n+1)})_{L^2}$,
\item[(iii)] $(\wt{Q}V,\wt{F})_{L^2}=\sum_{k=1}^n (V_{k(n+1)}, \partial_{x_k}f)_{L^2}$.
\end{itemize}
\end{proposition}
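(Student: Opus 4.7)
My plan is to prove the three identities by direct matrix computation, systematically exploiting the block-diagonal structure of $\wt{Q}$ and $\wt{A_k}$, the very sparse structure of $A_k$ and $\partial_{x_k}A_j$, and the fact that $F$ has only its last entry non-zero. Throughout, I would decompose $V\in\R^{n(n+1)}$ into $n$ blocks of length $n+1$; under this decomposition $V_{j+(j-1)(n+1)}$ is the $j$-th entry of the $j$-th block and $V_{k(n+1)}$ is the last entry of the $k$-th block.

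For (i), I would use that $\wt{Q}\wt{A_k}$ is block-diagonal with $QA_k$ on the diagonal, so $\partial_{x_k}(\wt{Q}\wt{A_k})$ is block-diagonal with $\partial_{x_k}(QA_k)$ on the diagonal. By Proposition \ref{prop_sym_wave}(i) the only non-zero entries of $\partial_{x_k}(QA_k)$ are $\partial_{x_k}a_k$ at positions $(k,n+1)$ and $(n+1,k)$. Expanding the quadratic form block by block and symmetrising the two off-diagonal contributions with the real $L^2$-pairing, each block contributes $2(\partial_{x_k}a_k V_{k+j(n+1)},V_{(j+1)(n+1)})_{L^2}$, and summation over $j=0,\dots,n-1$ yields the claim. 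Statement (iii) is analogous but easier: since $F=(0,\dots,0,f)^T$, each block of $\wt{F}$ has a single non-zero entry, namely $\partial_{x_k}f$ at position $n+1$, and the diagonal symmetriser $\wt{Q}$ has entry $1$ at every such position, so the inner product collapses to $\sum_{k=1}^n(V_{k(n+1)},\partial_{x_k}f)_{L^2}$.

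The main subtlety lies in (ii). The $(k,j)$-block of $\wt{Q}\wt{B}$ is $Q\,\partial_{x_k}A_j$, which must \emph{not} be confused with $\partial_{x_k}(QA_j)$: since $Q$ depends on $x$ through the $a_i$'s, Leibniz would produce a spurious extra term $(\partial_{x_k}Q)A_j$ that is absent from $\wt{Q}\wt{B}$. The saving observation is that $\partial_{x_k}A_j$ has a unique non-zero entry, namely $\partial_{x_k}a_j$ at position $(n+1,j)$, and since $Q_{n+1,n+1}=1$ one has $Q\,\partial_{x_k}A_j=\partial_{x_k}A_j$. Expanding $(\wt{Q}\wt{B}V,V)_{L^2}$ block by block, only the $(n+1)$-th entry of the $k$-th block of $\wt{Q}\wt{B}V$ is non-zero and equals $\sum_{j=1}^n\partial_{x_k}a_j V_{j+(j-1)(n+1)}$; pairing with $V$ in $L^2$ then isolates the component $V_{k(n+1)}$, giving
\[
(\wt{Q}\wt{B}V,V)_{L^2}=\sum_{k,j=1}^n(\partial_{x_k}a_j V_{j+(j-1)(n+1)},V_{k(n+1)})_{L^2}.
\]
Finally, since $\wt{Q}$ is symmetric and the real $L^2$-pairing is symmetric, $(\wt{B}^\ast\wt{Q}V,V)_{L^2}=(V,\wt{Q}\wt{B}V)_{L^2}=(\wt{Q}\wt{B}V,V)_{L^2}$, which accounts for the factor of $2$ in the statement. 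This last observation, together with the Leibniz subtlety above, is the single conceptual point in an otherwise purely computational argument.
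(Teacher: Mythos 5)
Your proposal is correct and follows essentially the same route as the paper: block-level computation exploiting the block-diagonal form of $\wt{Q}$ and $\wt{A_k}$ (reducing (i) and (iii) to the single-block identities \eqref{formula_E}), and for (ii) the observation that each block $Q\,\partial_{x_k}A_j$ of $\wt{Q}\wt{B}$ has a single non-zero entry $\partial_{x_k}a_j$ in position $(n+1,j)$, together with $((\wt{Q}\wt{B}+\wt{B}^\ast\wt{Q})V,V)_{L^2}=2(\wt{Q}\wt{B}V,V)_{L^2}$. Your explicit flagging of the distinction between $Q\,\partial_{x_k}A_j$ and $\partial_{x_k}(QA_j)$ is a helpful remark the paper leaves implicit, but it does not change the argument.
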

\begin{proof}
Since the matrices $\wt{Q}$ and $\wt{A_k}$ are block-diagonal we can argue at the block level. By \eqref{formula_E} we get
\[
\begin{split}
(\partial_{x_k}(\wt{Q}\wt{A_k})V,V)_{L^2}&=2\sum_{j=0}^{n-1}(\partial_{x_k}a_kV_{k+j(n+1)}, V_{(j+1)(n+1)})_{L^2},\\
(\wt{Q}V,\wt{F})_{L^2}&=\sum_{k=1}^n (V_{k(n+1)}, \partial_{x_k}f)_{L^2},
\end{split}
\]
which proves assertions (i) and (iii). By direct computations we have that 
\[
\wt{Q}\wt{B}=\left(
	\begin{array}{cccc}
	Q\partial_{x_1}A_1 & Q\partial_{x_1}A_2 & \cdots & Q\partial_{x_1}A_n \\
	Q\partial_{x_2}A_1& Q\partial_{x_2}A_2 & \cdots & Q\partial_{x_2}A_n \\
	\vdots & \vdots & \vdots & \vdots \\
	Q\partial_{x_k}A_1&  \cdots &Q\partial_{x_k}A_k & \cdots \\
	\vdots & \vdots & \vdots & \vdots \\
	Q\partial_{x_n}A_1&Q \partial_{x_n}A_2 & \cdots & Q\partial_{x_n}A_n\\
	 \end{array}
	\right)
\]
and $((\wt{Q}\wt{B}+\wt{B}^\ast\wt{Q})V,V)_{L^2}=2(\wt{Q}\wt{B}V,V)_{L^2}$. Note that the matrix $Q\partial_{x_k}A_j$ has only the entry $(n+1)j$ different from $0$ and equal to $\partial_{x_k}a_j$. It follows that 
\[
 ((\wt{Q}\wt{B}+\wt{B}^\ast\wt{Q})V,V)_{L^2}\\
=2\sum_{k=1}^n\sum_{j=1}^n(\partial_{x_k}a_jV_{j+(j-1)(n+1)}, V_{k(n+1)})_{L^2}.
\]
\end{proof}
\begin{remark}
Note that when $n=1$ then (i) and (ii) in the previous proposition give $2(a'V_1,V_2)_{L^2}$ as already observed in Section \ref{sec_1} and therefore 
\[
\frac{dE(t)}{dt}= (V_{2}, \partial_xf)_{L^2}.
\]
More in general, in any space dimension, we have that some summands in 
\[
-\sum_{k=1}^n(\partial_{x_k}(\wt{Q}\wt{A_k})V,V)_{L^2}
\]
will be cancelled by the terms in $((\wt{Q}\wt{B}+\wt{B}^\ast\wt{Q})V,V)_{L^2}$ where the derivatives $\partial_{x_k}a_k$ appear, for $k=1,\dots,n$. Indeed,
\begin{multline*}
-\sum_{k=1}^n(\partial_{x_k}(\wt{Q}\wt{A_k})V,V)_{L^2}=-2\sum_{k=1}^n\sum_{j=0}^{n-1} (\partial_{x_k}a_kV_{k+j(n+1)}, V_{(j+1)(n+1)})_{L^2}\\
=-2\sum_{0\le j\neq k-1\le n}(\partial_{x_k}a_kV_{k+j(n+1)}, V_{(j+1)(n+1)})_{L^2}-2\sum_{k=1}^n(\partial_{x_k}a_kV_{k+(k-1)(n+1)}, V_{k(n+1)})_{L^2}
\end{multline*}
and
\begin{multline*}
((\wt{Q}\wt{B}+\wt{B}^\ast\wt{Q})V,V)_{L^2}=2\sum_{1\le j\neq k\le n}(\partial_{x_k}a_jV_{j+(j-1)(n+1)},V_{k(n+1)})_{L^2}\\
+2\sum_{k=1}^n(\partial_{x_k}a_kV_{k+(k-1)(n+1)},V_{k(n+1)})_{L^2}.
\end{multline*}

\end{remark}
We can now estimates the terms in Proposition \ref{prop_terms_en_V}  above by means of the energy $E(t)$ defined by $\wt{Q}$ in \eqref{Energy_V}.
\begin{proposition}
\label{estimates_final}
Under the hypotheses (H1) and (H2),
\begin{itemize}
\item[(i)] there exists a constant $c_1(M,n)>0$ such that 
\[
\sum_{k=1}^n(\partial_{x_k}(\wt{Q}\wt{A_k})V,V)_{L^2}\le c_1E(t),
\]
for all $t\in[0,T]$;
\item[(ii)] there exists a constant $c_2(M,n)>0$ such that
\[
((\wt{Q}\wt{B}+\wt{B}^\ast\wt{Q})V,V)_{L^2}\le c_2 E(t).
\]
\end{itemize}
\end{proposition}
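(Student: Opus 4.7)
The plan is to reduce both (i) and (ii) to the bound $\lara{\wt Q V,V}\ge \sum_{k=0}^{n-1}\Vert V_{(k+1)(n+1)}\Vert_{L^2}^2$ together with the identification of the diagonal ``$a_i$-weighted'' terms in $E(t)$. The key input is Glaeser's inequality applied uniformly to each coefficient $a_j$: since hypothesis (H2) gives $\sum_{i=1}^n\Vert\partial_{x_i}^2 a_j\Vert_{L^\infty}\le M$ for every $j$, Proposition \ref{prop_Glaeser} yields $|\partial_{x_k}a_j(x)|^2\le 2M\,a_j(x)$ for all pairs $k,j$ and all $x\in\R^n$. This is what will convert a ``bad'' term $\Vert\partial_{x_k}a_j\,V_\ell\Vert_{L^2}^2$ into the ``good'' weighted term $2M(a_j V_\ell,V_\ell)_{L^2}$, which is exactly a summand of $E(t)$.

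For (i), I would start from the expression provided by Proposition \ref{prop_terms_en_V}(i) and apply Cauchy--Schwarz followed by the elementary inequality $2|(u,v)|\le \Vert u\Vert^2+\Vert v\Vert^2$ to obtain
\[
\sum_{k=1}^n (\partial_{x_k}(\wt Q\wt{A_k})V,V)_{L^2}\le \sum_{k=1}^n\sum_{j=0}^{n-1}\Big(\Vert\partial_{x_k}a_k\,V_{k+j(n+1)}\Vert_{L^2}^2+\Vert V_{(j+1)(n+1)}\Vert_{L^2}^2\Big).
\]
Glaeser's inequality turns each $\Vert\partial_{x_k}a_k\,V_{k+j(n+1)}\Vert_{L^2}^2$ into $2M(a_k V_{k+j(n+1)},V_{k+j(n+1)})_{L^2}$, and as $k$ runs from $1$ to $n$ and $j$ from $0$ to $n-1$ this sums precisely to the weighted part of $E(t)$, hence is $\le 2M\,E(t)$. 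The second piece contributes $n\sum_{j=0}^{n-1}\Vert V_{(j+1)(n+1)}\Vert_{L^2}^2\le nE(t)$ by the bound from below in \eqref{Energy_V_est_2}. This gives $c_1=2M+n$.

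For (ii) I would follow exactly the same template starting from Proposition \ref{prop_terms_en_V}(ii). Cauchy--Schwarz plus AM--GM gives
\[
((\wt Q\wt B+\wt B^\ast\wt Q)V,V)_{L^2}\le \sum_{k=1}^n\sum_{j=1}^n\Big(\Vert\partial_{x_k}a_j\,V_{j+(j-1)(n+1)}\Vert_{L^2}^2+\Vert V_{k(n+1)}\Vert_{L^2}^2\Big).
\]
Here the crucial point is that Glaeser's inequality applies to every $a_j$ independently of which variable $x_k$ we differentiate in, so $\Vert\partial_{x_k}a_j\,V_{j+(j-1)(n+1)}\Vert_{L^2}^2\le 2M(a_j V_{j+(j-1)(n+1)},V_{j+(j-1)(n+1)})_{L^2}$; summed over $k$ this produces a factor of $n$, and the resulting term $(a_j V_{j+(j-1)(n+1)},V_{j+(j-1)(n+1)})_{L^2}$ is again one of the diagonal weighted summands of $E(t)$. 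The second piece sums to $n\sum_{k=0}^{n-1}\Vert V_{(k+1)(n+1)}\Vert_{L^2}^2\le nE(t)$. Altogether one obtains $c_2=n(2M+1)$.

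The step that requires real care is purely bookkeeping: verifying that the index pairs $(j,\, k+j(n+1))$ in (i) and $(j,\, j+(j-1)(n+1))$ in (ii) land on the correct ``$a$-weighted'' slots of $E(t)$, so that Glaeser's inequality actually produces summands of $E(t)$ rather than extraneous terms. Once the indexing is sorted out, the argument is a direct transcription of the one-dimensional estimate performed in Section \ref{sec_1}; no new analytic input beyond Glaeser's inequality is required.
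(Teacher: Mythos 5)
Your proposal is correct and follows essentially the same route as the paper's proof: both start from the expressions in Proposition \ref{prop_terms_en_V}, apply Cauchy--Schwarz together with $2|(u,v)|\le\Vert u\Vert^2+\Vert v\Vert^2$, and use Glaeser's inequality $|\partial_{x_k}a_j|^2\le 2M a_j$ to convert the resulting terms into weighted summands of $E(t)$, finishing with the lower bound \eqref{Energy_V_est_2}. Your bookkeeping of the factors of $n$ in part (ii) is in fact slightly more explicit than the paper's, but this only affects the (unimportant) value of $c_2(M,n)$.
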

\begin{proof}
We begin by observing that the energy $E(t)$ can be rewritten as
\[
E(t)=\sum_{j=0}^{n-1}\sum_{k=1}^n(a_kV_{j(n+1)+k}, V_{j(n+1)+k})_{L^2}+\Vert V_{(j+1)(n+1)}\Vert_{L^2}^2.
\]
\begin{itemize}
\item[(i)]
By Proposition \ref{prop_terms_en_V} we have that
\[
\sum_{k=1}^n (\partial_{x_k}(\wt{Q}\wt{A_k})V,V)_{L^2}=2\sum_{k=1}^n\sum_{j=0}^n (\partial_{x_k}a_kV_{k+j(n+1)}, V_{(j+1)(n+1)})_{L^2}.
\]
By applying Cauchy-Schwarz and Glaeser's inequality we immediately obtain that
\begin{multline*}
2\sum_{j=0}^{n-1}\sum_{k=1}^n (\partial_{x_k}a_kV_{k+j(n+1)}, V_{(j+1)(n+1)})_{L^2}\\
\le \sum_{j=0}^{n-1}\sum_{k=1}^n 2M(a_kV_{k+j(n+1)},V_{k+j(n+1)})_{L^2}+\Vert V_{(j+1)(n+1)}\Vert_{L^2}^2\le c_1E(t).
\end{multline*}
\item[(ii)] Note that 
\[
\begin{split}
((\wt{Q}\wt{B}+\wt{B}^\ast\wt{Q})V,V)_{L^2}&=2\sum_{k=1}^n\sum_{j=1}^n(\partial_{x_k}a_jV_{j+(j-1)(n+1)}, V_{k(n+1)})_{L^2}\\
&=2\sum_{j=1}^n\sum_{k=1}^n(\partial_{x_j}a_kV_{k+(k-1)(n+1)}, V_{j(n+1)})_{L^2}
\end{split}
\]
and by Cauchy-Schwarz and Glaeser's inequality 
\begin{multline*}
2\sum_{j=1}^n\sum_{k=1}^n(\partial_{x_j}a_kV_{k+(k-1)(n+1)}, V_{j(n+1)})_{L^2}\\
\le 2M\sum_{k=1}^n(a_kV_{k+(k-1)(n+1)}, V_{k+(k-1)(n+1)})_{L^2}+\sum_{j=1}^n\Vert V_{j(n+1)}\Vert^2_{L^2}.
\end{multline*}
Since
\[
E(t)=\sum_{j=0}^{n-1}\sum_{k=1}^n(a_kV_{j(n+1)+k}, V_{j(n+1)+k})_{L^2}+\Vert V_{(j+1)(n+1)}\Vert_{L^2}^2.
\]
we easily see that 
\[
\sum_{k=1}^n(a_kV_{k+(k-1)(n+1)}, V_{k+(k-1)(n+1)})_{L^2}\le E(t)
\]
and
\[
\sum_{j=1}^n\Vert V_{j(n+1)}\Vert^2_{L^2}=\sum_{j=0}^{n-1}\Vert V_{(j+1)(n+1)}\Vert^2_{L^2}\le E(t).
\]
Thus, there exists a constant $c_2=c_2(M,n)>0$ such that 
\[
((\wt{Q}\wt{B}+\wt{B}^\ast\wt{Q})V,V)_{L^2}=2\sum_{k=1}^n\sum_{j=1}^n(\partial_{x_k}a_jV_{j+(j-1)(n+1)}, V_{k(n+1)})_{L^2}\le c_2E(t).
\]
\end{itemize}
\end{proof}
By combining Proposition \ref{prop_terms_en_V} with Proposition \ref{estimates_final} we obtain the following estimates on $E(t)$ and the components $V_{(j+1)(n+1)}$ of $V$ with $j=0,\dots, n-1$.
\begin{proposition}
\label{prop_LC_V}
Let 
\[
E(t)=(\wt{Q}V,V)_{L^2}=\sum_{j=0}^{n-1}\sum_{k=1}^n(a_kV_{j(n+1)+k}, V_{j(n+1)+k})_{L^2}+\Vert V_{(j+1)(n+1)}\Vert_{L^2}^2
\]
be the Energy of the system \eqref{CP1_V}. Under the assumptions (H1) and (H2) 
\begin{itemize}
\item[(i)] there exists a constant $c'=c'(n,M)>0$ such that 
\[
\frac{dE(t)}{dt}\le c'E(t)+\Vert f(t)\Vert_{H^1}^2,
\]
for all $t\in[0,T]$;
\item[(ii)] there exists a constant $C'=C'(n,M,\max_{i=1,\dots,n}\Vert a_i\Vert_{\infty},T)>0$ such that 
\[
\sum_{j=0}^{n-1}\Vert V_{(j+1)(n+1)}\Vert_{L^2}^2\le C'\biggl(\sum_{i=1}^{n(n+1)}\Vert V_i(0)\Vert_{L^2}^2+\int_{0}^t\Vert f(s)\Vert_{H^1}^2\, ds\biggr),
\]
for all $t\in[0,T]$.
\end{itemize}
\end{proposition}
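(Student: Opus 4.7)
The plan is to insert the energy identity \eqref{energy_V_dt} into the estimates of Propositions~\ref{prop_terms_en_V} and \ref{estimates_final}, derive a differential inequality for $E(t)$, and then apply Gr\"onwall's lemma together with the two-sided bounds \eqref{Energy_V_est}--\eqref{Energy_V_est_2}. This mirrors the argument carried out in Proposition~\ref{prop_Energy_1} for the system in $U$, the only difference being the presence of the zero-order term $\wt{B}V$ and the fact that the forcing $\wt{F}$ now carries first derivatives of $f$.

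First I would start from the identity
\[
\frac{dE(t)}{dt}=-\sum_{k=1}^n(\partial_{x_k}(\wt{Q}\wt{A_k})V,V)_{L^2}+((\wt{Q}\wt{B}+\wt{B}^\ast\wt{Q})V,V)_{L^2}+2(\wt{Q}V,\wt{F})_{L^2}
\]
established in \eqref{energy_V_dt}. The first two summands are already controlled by $E(t)$ thanks to Proposition~\ref{estimates_final}; each produces a constant depending only on $M$ and $n$. For the forcing term, Proposition~\ref{prop_terms_en_V}(iii) gives
\[
2(\wt{Q}V,\wt{F})_{L^2}=2\sum_{k=1}^n(V_{k(n+1)},\partial_{x_k}f)_{L^2},
\]
and Cauchy--Schwarz plus the elementary inequality $2ab\le a^2+b^2$ yields
\[
2(\wt{Q}V,\wt{F})_{L^2}\le \sum_{k=1}^n\Vert V_{k(n+1)}\Vert_{L^2}^2+\sum_{k=1}^n\Vert\partial_{x_k}f\Vert_{L^2}^2\le E(t)+\Vert f(t)\Vert_{H^1}^2,
\]
where in the last step I use the lower bound in \eqref{Energy_V_est}. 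Collecting the three contributions proves part (i) with $c'=c_1+c_2+1$.

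For part (ii) I apply Gr\"onwall's lemma to the inequality from (i), obtaining
\[
E(t)\le\biggl(E(0)+\int_0^t\Vert f(s)\Vert_{H^1}^2\, ds\biggr){\rm e}^{c'T},\qquad t\in[0,T].
\]
The lower bound in \eqref{Energy_V_est} gives $\sum_{j=0}^{n-1}\Vert V_{(j+1)(n+1)}(t)\Vert_{L^2}^2\le E(t)$, while the upper bound in \eqref{Energy_V_est_2} lets me estimate
\[
E(0)\le \max_{i=1,\dots,n}\Vert a_i\Vert_{L^\infty}\sum_{i=1}^{n(n+1)}\Vert V_i(0)\Vert_{L^2}^2+\sum_{j=0}^{n-1}\Vert V_{(j+1)(n+1)}(0)\Vert_{L^2}^2\le \max\bigl\{\max_i\Vert a_i\Vert_{L^\infty},1\bigr\}\sum_{i=1}^{n(n+1)}\Vert V_i(0)\Vert_{L^2}^2.
\]
Combining these with the Gr\"onwall bound produces the desired $C'=C'(n,M,\max_i\Vert a_i\Vert_\infty,T)$ in assertion (ii).

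There is no real obstacle here: all the substantive work (the Glaeser-based control of $(\partial_{x_k}(\wt{Q}\wt{A_k})V,V)$ and of the commutator term $(\wt{Q}\wt{B}+\wt{B}^\ast\wt{Q})$) has already been packaged in Proposition~\ref{estimates_final}. The only point that requires a little care is bookkeeping: one must check that the indices $k(n+1)$ appearing in $2(\wt Q V,\wt F)_{L^2}$ are precisely the indices of the ``last component of each block,'' so that their $L^2$-norms sum to something bounded by $E(t)$ via \eqref{Energy_V_est}, rather than by $E(t)$ up to a factor involving $\max_i\Vert a_i\Vert_\infty$. This is exactly what the lower bound in \eqref{Energy_V_est} asserts, so the estimate in (i) indeed holds with a constant depending only on $n$ and $M$ as claimed.
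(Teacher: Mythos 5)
Your proposal is correct and follows essentially the same route as the paper: both insert the identity \eqref{energy_V_dt} into Propositions \ref{prop_terms_en_V} and \ref{estimates_final}, bound the forcing term by $E(t)+\Vert f\Vert_{H^1}^2$ via Cauchy--Schwarz and the lower bound in \eqref{Energy_V_est}, and then conclude with Gr\"onwall's lemma and \eqref{Energy_V_est_2}. Your explicit remark that the indices $k(n+1)$ coincide with the block-final indices $(j+1)(n+1)$, and your explicit estimate of $E(0)$, are just slightly more detailed versions of steps the paper leaves implicit.
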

\begin{proof}
By applying the results of Propositions \ref{prop_terms_en_V} and \ref{estimates_final} to \eqref{energy_V_dt} we have 
\[
\begin{split}
\frac{dE(t)}{dt}&\le c_1E(t)+c_2E(t)+2\sum_{k=1}^n (V_{k(n+1)}, \partial_{x_k}f)_{L^2}\\
&\le c_1E(t)+c_2E(t)+\Vert f\Vert_{H^1}^2+\sum_{k=1}^n \Vert V_{k(n+1)}\Vert_{L^2}^2\\
&\le (c_1+c_2+1)E(t)+\Vert f\Vert_{H^1}^2.
\end{split}
\]
Hence, by setting $c'=c_1+c_2+1$ we get the first assertion of this proposition. A straightforward application of  Gr\"onwall's lemma to the inequality above combined with the estimate \eqref{Energy_V_est_2} yields
\beq
\label{est_V_comp}
\begin{split}
\sum_{j=0}^{n-1}\Vert V_{(j+1)(n+1)}\Vert_{L^2}^2\le E(t)&\le \biggl(E(0)+\int_{0}^t\Vert f(s)\Vert_{H^1}^2\, ds\biggr){\rm e}^{c't}\\
&\le C'\biggl(\sum_{i=1}^{n(n+1)}\Vert V_i(0)\Vert_{L^2}^2+\int_{0}^t\Vert f(s)\Vert_{H^1}^2\, ds\biggr),
\end{split}
\eeq
for all $t\in[0,T]$, where $C'=C'(n,M,\max_{i=1,\dots,n}\Vert a_i\Vert_{\infty},T)>0$.
\end{proof}
We can now estimate the entries $U_i$ with $i\neq n+1$. Indeed, by construction, $\partial_t U_i=V_{(n+1)i}$ for $i=1,\dots,n$.
\begin{proposition}
\label{prop_est_U}
Let $i=1,\dots,n$. Under the assumptions (H1) and (H2) there exists a constant $C_i=C_i(n,M,N,\max_{i=1,\dots,n}\Vert a_i\Vert_{\infty},T)>0$ such that 
\beq
\label{est_U_i}
\Vert U_i(t)\Vert_{L^2}^2\le 
C_i\biggl(\Vert g_0\Vert_{H^2}^2+\Vert g_1\Vert_{H^1}^2+\int_{0}^t\Vert f(s)\Vert_{H^1}^2\, ds\biggr)\
\eeq
for all $t\in[0,T]$.
\end{proposition}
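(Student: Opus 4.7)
The strategy is exactly the one used in the one-dimensional case in Section \ref{sec_1} to bootstrap the estimate on $V_2$ into an estimate on $U_1$: namely, the fundamental theorem of calculus together with the already established energy bound on the components $V_{(j+1)(n+1)}$ of the derived system.

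First I would note the key identity: since $U_i=\partial_{x_i}u$ and $U_{n+1}=\partial_t u$, we have
\[
\partial_tU_i=\partial_t\partial_{x_i}u=\partial_{x_i}\partial_tu=\partial_{x_i}U_{n+1},
\]
and by the block structure of $V=(\partial_{x_1}U,\dots,\partial_{x_n}U)^T$ this quantity is precisely the $(n+1)$-th entry of the $i$-th block, i.e.\ $V_{i(n+1)}$. Hence, by the fundamental theorem of calculus,
\[
U_i(t,x)=U_i(0,x)+\int_0^tV_{i(n+1)}(s,x)\,ds=\partial_{x_i}g_0(x)+\int_0^tV_{i(n+1)}(s,x)\,ds.
\]

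Second, I would take $L^2$-norms, apply the elementary $\|a+b\|^2\le 2\|a\|^2+2\|b\|^2$, and then use Minkowski's integral inequality followed by Cauchy-Schwarz in $s$:
\[
\Vert U_i(t)\Vert_{L^2}^2\le 2\Vert\partial_{x_i}g_0\Vert_{L^2}^2+2\Bigl(\int_0^t\Vert V_{i(n+1)}(s)\Vert_{L^2}\,ds\Bigr)^{\!2}\le 2\Vert g_0\Vert_{H^1}^2+2t\int_0^t\Vert V_{i(n+1)}(s)\Vert_{L^2}^2\,ds.
\]

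Third, I would invoke Proposition \ref{prop_LC_V}(ii): since $\Vert V_{i(n+1)}(s)\Vert_{L^2}^2\le \sum_{j=0}^{n-1}\Vert V_{(j+1)(n+1)}(s)\Vert_{L^2}^2$, the bound \eqref{est_V_comp} gives
\[
\Vert V_{i(n+1)}(s)\Vert_{L^2}^2\le C'\biggl(\sum_{k=1}^{n(n+1)}\Vert V_k(0)\Vert_{L^2}^2+\int_0^s\Vert f(\sigma)\Vert_{H^1}^2\,d\sigma\biggr).
\]
The initial data of $V$ satisfy $V_k(0)=\partial_{x_{\lceil k/(n+1)\rceil}}U(0)$, and since $U(0)=(\partial_{x_1}g_0,\dots,\partial_{x_n}g_0,g_1)^T$ we get
\[
\sum_{k=1}^{n(n+1)}\Vert V_k(0)\Vert_{L^2}^2\le c_n\bigl(\Vert g_0\Vert_{H^2}^2+\Vert g_1\Vert_{H^1}^2\bigr)
\]
for a purely dimensional constant $c_n$.

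Finally, I would combine the last two displays with the first $L^2$-estimate, noting that $t\le T$ and that $\int_0^t\int_0^s\Vert f(\sigma)\Vert_{H^1}^2\,d\sigma\,ds\le T\int_0^t\Vert f(s)\Vert_{H^1}^2\,ds$, to conclude that
\[
\Vert U_i(t)\Vert_{L^2}^2\le C_i\biggl(\Vert g_0\Vert_{H^2}^2+\Vert g_1\Vert_{H^1}^2+\int_0^t\Vert f(s)\Vert_{H^1}^2\,ds\biggr)
\]
for a constant $C_i$ depending on the same parameters as $C'$ together with $T$. There is no real obstacle here; the proof is a direct bootstrap from Proposition \ref{prop_LC_V}, and the only care required is bookkeeping the dependence of the constants on $T$, $M$, $n$ and $\max_i\Vert a_i\Vert_\infty$.
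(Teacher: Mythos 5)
Your proposal is correct and follows essentially the same route as the paper: identify $\partial_tU_i$ with the component $V_{i(n+1)}$ of the derived system, apply the fundamental theorem of calculus and Minkowski's integral inequality, and then feed in the energy bound of Proposition \ref{prop_LC_V}(ii) together with the expression of $V(0)$ and $U(0)$ in terms of $g_0,g_1$. The only (immaterial) difference is that you control $\int_0^t\Vert V_{i(n+1)}(s)\Vert_{L^2}\,ds$ by Cauchy--Schwarz in $s$, whereas the paper uses the cruder bound $t\sup_{s\in[0,t]}\Vert V_{i(n+1)}(s)\Vert_{L^2}$; both yield the stated constant dependence.
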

\begin{proof}
Let us write $\partial_tU_i$ as $V_{(n+1)i}$. By the fundamental theorem of calculus we have
\[
  \Vert U_i(t)\Vert^2_{L^2}\le 2\Vert U_i(t)-U_i(0)\Vert_{L^2}^2+2\Vert U_i(0)\Vert_{L^2}^2
 =2\Big\Vert \int_{0}^tV_{(n+1)i}\, ds\Big\Vert_{L^2}^2+2\Vert U_i(0)\Vert_{L^2}^2\\
 \]
  By Minkowski's integral inequality
  \[
  \Big\Vert \int_{0}^tV_{(n+1)i}\, ds\Big\Vert_{L^2}\le \int_{0}^t \Vert V_{(n+1)i}(s)\Vert_{L^2}ds
  \]
  and therefore
  \[
    \Vert U_i(t)\Vert^2_{L^2}\le 2\biggl(\int_{0}^t \Vert V_{(n+1)i}(s)\Vert_{L^2}ds\biggr)^2+2\Vert U_i(0)\Vert_{L^2}^2\le 2t^2 \sup_{s\in[0,t]} \Vert V_{(n+1)i}(s)\Vert^2_{L^2}+2\Vert U_i(0)\Vert_{L^2}^2.
  \]
  By making use of the estimate \eqref{est_V_comp} we conclude that 
  \[
    \Vert U_i(t)\Vert^2_{L^2}\le 2t^2C'\biggl(\sum_{i=1}^{n(n+1)}\Vert V_i(0)\Vert_{L^2}^2+\int_{0}^t\Vert f(s)\Vert_{H^1}^2\, ds\biggr)+2\Vert U_i(0)\Vert_{L^2}^2,
  \]
  where $C'=C'(n,M,\max_{i=1,\dots,n}\Vert a_i\Vert_{\infty},T)>0$, for all $t\in[0,T]$. Recalling that 
  \[
 V(0,x)=(\partial_{x_1}U(0,x),\partial_{x_2}U(0,x),\dots,\partial_{x_n}U(0,x), U(0,x))^T
 \]
 and 
 \[
 U(0,x)=(\partial_{x_1}g_0,\partial_{x_2}g_0,\cdots, \partial_{x_n}g_0, g_1)^{T}
 \]
 we easily see that
 \[
 \sum_{i=1}^{n(n+1)}\Vert V_i(0)\Vert_{L^2}^2\le \Vert g_0\Vert_{H^2}^2+\Vert g_1\Vert_{H^1}^2
 \]
 and
 \[
\Vert U_i(0)\Vert_{L^2}^2\le \Vert g_0\Vert^2_{H^1}.
 \]
 Hence, we have proven that there exists a new constant $C_i>0$ such that \eqref{est_U_i} holds.
  
\end{proof}
Summarising, we have proven so far that, under the assumptions (H1) and (H2),  if $U$ is a solution of the Cauchy problem 
\[
\begin{split}
\partial_tU&=\sum_{k=1}^nA_k(x)\partial_{x_k} U+F,\\
U(0,x)&=(\partial_{x_1}g_0,\partial_{x_2}g_0,\cdots, \partial_{x_n}g_0, g_1)^{T},
\end{split}
\]
then there exist constants $C_i=C_i(n,M,\max_{i=1,\dots,n}\Vert a_i\Vert_{\infty},T)>0$, for $i=1,\dots,n$ and $C_{n+1}=C_{n+1}(n,M,\max_{i=1,\dots,n}\Vert a_i\Vert_{\infty},T)>0$ such that 
\[
\begin{split}
\Vert U_i(t)\Vert_{L^2}^2&\le 
C_i\biggl(\Vert g_0\Vert_{H^2}^2+\Vert g_1\Vert_{H^1}^2+\int_{0}^t\Vert f(s)\Vert_{H^1}^2\, ds\biggr),\\
\Vert U_{n+1}(t)\Vert_{L^2}^2&\le C_{n+1}\biggl(\Vert g_0\Vert_{H^1}^2+\Vert g_1\Vert_{L^2}^2+\int_{0}^t\Vert f(s)\Vert_{L^2}^2\, ds\biggr),
\end{split}
\]
for all $t\in[0,T]$. It follows that there exists a constant $C_0(n,M,\max_{i=1,\dots,n}\Vert a_i\Vert_{\infty},T)>0$ such that
\[
\Vert U(t)\Vert_{L^2}^2\le 
C_0\biggl(\Vert g_0\Vert_{H^2}^2+\Vert g_1\Vert_{H^1}^2+\int_{0}^t\Vert f(s)\Vert_{H^1}^2\, ds\biggr).
\]
We now want to show that similar estimates can be obtained for all Sobolev norms.
 
\subsection{Sobolev estimates of $U$}
We begin by recalling that, in order to get $L^2$-estimates on $U$, we have transformed the $n+1\times n+1$ system 
\[
\begin{split}
\partial_tU&=\sum_{k=1}^nA_k(x)\partial_{x_k} U+F,\\
U(0,x)&=(\partial_{x_1}g_0,\partial_{x_2}g_0,\cdots, \partial_{x_n}g_0, g_1)^{T},
\end{split}
\]
into the $n(n+1)\times n(n+1)$  system
\beq
\label{system_V}
\begin{split}
\partial_t V&=\sum_{k=1}^n\wt{A_k}(x)\partial_{x_k} V+\wt{B}V+\wt{F},\\
V(0,x)&=(\partial_{x_1}U(0,x),\partial_{x_2}U(0,x),\dots,\partial_{x_n}U(0,x))^T,
\end{split}
\eeq
where, $V=\nabla_x U$, 
\[
\wt{A_k}=\left(
	\begin{array}{cccc}
	A_k & 0 & \cdots & 0\\
	0 & A_k & \cdots & 0\\
	\vdots & \vdots & \vdots & \vdots\\
	0 & 0 & \cdots & A_k
	\end{array}
	\right),
\]
for $k=1,\dots,n$,
\[
\wt{B}=\left(
	\begin{array}{ccccc}
	\partial_{x_1}A_1 & \partial_{x_1}A_2 & \cdots & \cdots & \partial_{x_1}A_n\\
	\partial_{x_2}A_1& \partial_{x_2}A_2 & \cdots & \cdots & \partial_{x_2}A_n\\
	\vdots & \vdots & \vdots & \vdots & \vdots\\
	\partial_{x_k}A_1&  \cdots &\partial_{x_k}A_k & \cdots & \partial_{x_k}A_n\\
	\vdots & \vdots & \vdots & \vdots & \vdots\\
	\partial_{x_n}A_1& \partial_{x_n}A_2 & \cdots & \cdots & \partial_{x_n}A_n\\
	 \end{array}
	\right)
\]
and $\wt{F}=\nabla_x F$. We now iterate this transformation. Each iteration will allow us to get Sobolev estimates on $U$ of one order higher.

Let $W=\nabla_x V$. If $V$ solves \eqref{system_V} then we obtain the $n^2(n+1)\times n^2(n+1)$ system
\beq
\label{system_W}
\begin{split}
\partial_t W&=\sum_{k=1}^n\wt{\wt{A_k}}(x)\partial_{x_k} W+\wt{\wt{B}}W+\wt{\wt{F}},\\
W(0,x)&=(\partial_{x_1}V(0,x),\partial_{x_2}V(0,x),\dots,\partial_{x_n}V(0,x))^T,
\end{split}
\eeq
where
\[
\wt{\wt{A_k}}=\left(
	\begin{array}{cccc}
	\wt{A_k} & 0 & \cdots & 0\\
	0 & \wt{A_k} & \cdots & 0\\
	\vdots & \vdots & \vdots & \vdots\\
	0 & 0 & \cdots & \wt{A_k}
	\end{array}
	\right),
\]
for $k=1,\dots,n$,
\[
\wt{\wt{B}}=\left(
	\begin{array}{ccccc}
	\partial_{x_1}\wt{A_1}+\wt{B} & \partial_{x_1}\wt{A_2} & \cdots & \cdots & \partial_{x_1}\wt{A_n}\\
	\partial_{x_2}\wt{A_1}& \partial_{x_2}\wt{A_2}+\wt{B} & \cdots & \cdots & \partial_{x_2}\wt{A_n}\\
	\vdots & \vdots & \vdots & \vdots & \vdots\\
	\partial_{x_k}\wt{A_1}&  \cdots &\partial_{x_k}\wt{A_k}+\wt{B} & \cdots & \partial_{x_k}\wt{A_n}\\
	\vdots & \vdots & \vdots & \vdots & \vdots\\
	\partial_{x_n}\wt{A_1}& \partial_{x_n}\wt{A_2} & \cdots & \cdots & \partial_{x_n}\wt{A_n}+\wt{B}\\
	 
	\end{array}
	\right)
\]
and 
\[
\wt{\wt{F}}=\nabla_x \wt{F}+\left(
	\begin{array}{c}
	(\partial_{x_1}\wt{B})V\\ 
	(\partial_{x_2}\wt{B})V\\ 
	\vdots \\
	(\partial_{x_k}\wt{B})V\\
	\vdots\\
	(\partial_{x_n}\wt{B})V\\ 
	\end{array}
	\right).
\]
This system has a structure similar to the one of \eqref{system_V}, so we define the corresponding energy $E(t)=(\wt{\wt{Q}}W,W)_{L^2}$, where $\wt{\wt{Q}}$ is a block-diagonal matrix with $n^2$ identical blocks equal to $Q$. Arguing as in \eqref{energy_V_dt} we get 
\[
 \frac{dE(t)}{dt} =-\sum_{k=1}^n(\partial_{x_k}(\wt{\wt{Q}}\wt{\wt{A_k}})W,W)_{L^2}+((\wt{\wt{Q}}\wt{\wt{B}}+\wt{\wt{B}}^\ast\wt{\wt{Q}})W,W)_{L^2}+2(\wt{\wt{Q}}W,\wt{\wt{F}})_{L^2}.
\]
\begin{proposition}
\label{estimates_final_W}
Under the hypotheses (H1) and (H2),
\begin{itemize}
\item[(i)] there exists a constant $c_1(M,n)>0$ such that 
\[
\sum_{k=1}^n(\partial_{x_k}(\wt{\wt{Q}}\wt{\wt{A_k}})W,W)_{L^2}\le c_1E(t),
\]
for all $t\in[0,T]$;
\item[(ii)] there exists a constant $c_2(M,n)>0$ such that
\[
((\wt{\wt{Q}}\wt{\wt{B}}+\wt{\wt{B}}^\ast\wt{\wt{Q}})W,W)_{L^2}\le c_2 E(t).
\]
\item[(iii)] there exist 
\[
c_3(n, \max_{i=1,\dots,n, |\alpha|=2}\Vert \partial^\alpha a_i\Vert_{\infty}^2,T)>0
\]
such that 
\[
2(\wt{\wt{Q}}W,\wt{\wt{F}})_{L^2}\le 2E(t)+\Vert f\Vert_{H^2}^2+c_3\biggl(\int_0^t E(s)\, ds+\Vert g_0\Vert_{H^2}^2\biggr).
\]
\end{itemize}
\end{proposition}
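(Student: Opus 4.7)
The plan is to exploit the block structure of $\wt{\wt{Q}}$ and $\wt{\wt{A_k}}$ to reduce parts (i) and (ii) to the $V$-level estimates already carried out in Proposition \ref{estimates_final}, and then to treat the genuinely new term $(\partial_{x_k}\wt{B})V$ appearing inside $\wt{\wt{F}}$ in (iii) by bounding the specific components of $V$ that it involves via a fundamental-theorem-of-calculus argument, rewriting $\partial_t V$ through commutation of partial derivatives as a component of $W$.

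For (i), observe that $\wt{\wt{Q}}\wt{\wt{A_k}}$ is block-diagonal consisting of $n^{2}$ identical copies of $QA_{k}$. Hence $(\partial_{x_k}(\wt{\wt{Q}}\wt{\wt{A_k}})W,W)_{L^2}$ splits into $n^{2}$ summands of the form $(\partial_{x_k}(QA_{k})W^{(b)},W^{(b)})_{L^2}$, each bounded via \eqref{formula_E} and Glaeser's inequality by a constant multiple of $(QW^{(b)},W^{(b)})_{L^2}$; summing over blocks yields $c_1E(t)$ with $c_1=c_1(M,n)$. For (ii), every scalar entry of $\wt{\wt{B}}$ is a first-order derivative of some $a_i$, since $\partial_{x_k}\wt{A_{\ell}}$ is block-diagonal with scalar entries $\partial_{x_k}a_{\ell}$ and the extra $\wt{B}$ on the block diagonal has entries of the form $\partial_{x_j}a_{\ell}$. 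The diagonal form of $\wt{\wt{Q}}$ then reduces $((\wt{\wt{Q}}\wt{\wt{B}}+\wt{\wt{B}}^{\ast}\wt{\wt{Q}})W,W)_{L^2}$ to a sum of bilinear expressions of the type already controlled in Proposition \ref{estimates_final}(ii); applying Cauchy--Schwarz together with Glaeser's inequality exactly as there yields the bound $c_2E(t)$.

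For (iii), I split $\wt{\wt{F}}=\nabla_x\wt{F}+R$ with $R=((\partial_{x_k}\wt{B})V)_{k=1,\dots,n}$. The contribution of $\nabla_x\wt{F}$ is immediate: its only non-zero entries are the second-order derivatives $\partial_{x_k}\partial_{x_j}f$, so Cauchy--Schwarz produces a bound of the form $E(t)+\Vert f\Vert_{H^2}^{2}$. For $R$, a block-by-block inspection shows that the only components of $V$ that survive the multiplication are $V_{(\ell-1)(n+1)+\ell}=\partial_{x_{\ell}}^{2}u$ for $\ell=1,\dots,n$, multiplied by coefficients $\partial_{x_k}\partial_{x_j}a_{\ell}$ bounded by $M$ thanks to (H2); Cauchy--Schwarz then gives $2(\wt{\wt{Q}}W,R)_{L^2}\le E(t)+c(M,n)\sum_{\ell=1}^{n}\Vert V_{(\ell-1)(n+1)+\ell}\Vert_{L^2}^{2}$. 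The key observation is that $\partial_t V_{(\ell-1)(n+1)+\ell}=\partial_{x_{\ell}}^{2}\partial_{t}u=\partial_{x_{\ell}}V_{\ell(n+1)}$ is itself a component of $W=\nabla_x V$ sitting at the $(n+1)$-th position of a sub-block, so it enters $E(t)$ with weight $1$ and satisfies $\Vert\partial_t V_{(\ell-1)(n+1)+\ell}(s)\Vert_{L^2}^{2}\le E(s)$. The fundamental theorem of calculus combined with Minkowski's integral inequality and Cauchy--Schwarz in time then delivers $\Vert V_{(\ell-1)(n+1)+\ell}(t)\Vert_{L^2}^{2}\le 2\Vert g_0\Vert_{H^2}^{2}+2T\int_{0}^{t}E(s)\,ds$, and assembling the pieces produces the claimed inequality with $c_3$ depending on $n$, $M$, $\max_{i,|\alpha|=2}\Vert\partial^{\alpha}a_{i}\Vert_{\infty}^{2}$ and $T$.

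The main obstacle is precisely the structural identification in (iii): pinning down which components of $V$ are actually acted upon by $(\partial_{x_k}\wt{B})V$ (only the ``diagonal'' ones $\partial_{x_{\ell}}^{2}u$) and verifying that their time derivatives close up inside the $W$-system via commutation of partial derivatives. Once this is in place, parts (i) and (ii) become bookkeeping reducing to Proposition \ref{estimates_final}, and the closing Gr\"onwall-type manipulation in (iii) is standard.
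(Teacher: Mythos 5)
Your proposal is correct and follows essentially the same route as the paper: block-diagonal reduction of (i) to the $V$-level computation, the split of $\wt{\wt{Q}}\wt{\wt{B}}$ into the $\partial_{x_k}\wt{A_\ell}$ part plus the block-diagonal $\wt{B}$ part handled by Cauchy--Schwarz and Glaeser for (ii), and for (iii) the decomposition $\wt{\wt{F}}=\nabla_x\wt{F}+((\partial_{x_k}\wt{B})V)_k$ followed by the fundamental-theorem-of-calculus step converting $\Vert V_{(\ell-1)(n+1)+\ell}\Vert_{L^2}^2$ into $\int_0^t E(s)\,ds+\Vert g_0\Vert_{H^2}^2$. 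Your identification that only the ``diagonal'' components $V_{(\ell-1)(n+1)+\ell}=\partial_{x_\ell}^2u$ survive the multiplication by $\partial_{x_k}\wt{B}$ is in fact slightly sharper than the paper's bound by the full double sum, but the argument is the same.
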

\begin{proof}
\leavevmode
\begin{itemize}
\item[(i)] Since the matrices $\wt{\wt{Q}}$ and $\wt{\wt{A_k}}$ are block diagonal we can argue on any block as for $(\partial_{x_k}({\wt{Q}}{\wt{A_k}})V,V)_{L^2}$ and we obtain the desired estimate.
\item[(ii)] Note that $((\wt{\wt{Q}}\wt{\wt{B}}+\wt{\wt{B}}^\ast\wt{\wt{Q}})W,W)_{L^2}=2(\wt{\wt{Q}}\wt{\wt{B}}W,W)_{L^2}$. We have that
\begin{multline*}
\wt{\wt{Q}}\wt{\wt{B}}W=\\
\left(
	\begin{array}{ccccc}
	\wt{Q}\partial_{x_1}\wt{A_1} & \wt{Q}\partial_{x_1}\wt{A_2} & \cdots & \cdots & \wt{Q}\partial_{x_1}\wt{A_n}\\
	\wt{Q}\partial_{x_2}\wt{A_1}& \wt{Q}\partial_{x_2}\wt{A_2}& \cdots & \cdots &\wt{Q}\partial_{x_2}\wt{A_n}\\
	\vdots & \vdots & \vdots & \vdots & \vdots\\
	\wt{Q}\partial_{x_k}\wt{A_1}&  \cdots &\wt{Q}\partial_{x_k}\wt{A_k} & \cdots & \wt{Q}\partial_{x_k}\wt{A_n}\\
	\vdots & \vdots & \vdots & \vdots & \vdots\\
	\wt{Q}\partial_{x_n}\wt{A_1}& \wt{Q}\partial_{x_n}\wt{A_2} & \cdots & \cdots & \wt{Q}\partial_{x_n}\wt{A_n}\\
	 \end{array}
	\right)W+\left(
	\begin{array}{cccc}
	\wt{Q}\wt{B} & 0 & \cdots & 0\\
	0 & \wt{Q}\wt{B}  & \cdots & 0\\
	\vdots & \vdots & \vdots & \vdots\\
	0 & 0 & \cdots & \wt{Q}\wt{B} 
	\end{array}
	\right)W\\
	=S_1W+S_2W
\end{multline*}
Since the second summand $S_2$ is block diagonal and our energy is block diagonal as well we can argue on it as for $(\wt{Q}\wt{B}V,V)$ obtaining immediately the estimate we want, i.e., $(S_2W,W)_{L^2}\le c_2 E(t)$. By direct computations on the first summand $S_1$ we have that 
\[
(S_1W,W)_{L^2}=\sum_{k=1}^n\sum_{j=1}^n\sum_{h=1}^n(\partial_{x_j}a_h W_{h+n(n+1)(h-1)+(k-1)(n+1)}, W_{(n+1)k+n(n+1)(j-1)}).
\]
Since by Cauchy-Schwarz and Glaeser's inequality 
\[
\begin{split}
&2(\partial_{x_j}a_h W_{h+n(n+1)(h-1)+(k-1)(n+1)}, W_{(n+1)k+n(n+1)(j-1)})\\
&\le 2M(a_h W_{h+n(n+1)(h-1)+(k-1)(n+1)}, W_{h+n(n+1)(h-1)+(k-1)(n+1)})_{L^2}\\
&+\Vert W_{(n+1)k+n(n+1)(j-1)})\Vert_{L^2}^2
\end{split}
\]
we deduce that there exists a constant $c_2(M,n)$ such that 
\[
\begin{split}
&2(S_1W,W)\\
&\le \sum_{k=1}^n\sum_{j=1}^n\sum_{h=1}^n \biggl(2M(a_h W_{h+n(n+1)(h-1)+(k-1)(n+1)}, W_{h+n(n+1)(h-1)+(k-1)(n+1)})_{L^2}\\
&+\Vert W_{(n+1)k+n(n+1)(j-1)})\Vert_{L^2}^2\biggr)\le c_2(\wt{\wt{Q}}W,W)_{L^2}=c_2 E(t),
\end{split}
\]
for all $t\in[0,T]$. This proves assertion (ii).
\item[(iii)] We begin by writing $(\wt{\wt{Q}}W,\wt{\wt{F}})_{L^2}$ as 
\[
(\wt{\wt{Q}}W,\nabla_x \wt{F})_{L^2} +(\wt{\wt{Q}}W, \left(
	\begin{array}{c}
	(\partial_{x_1}\wt{B})V\\ 
	(\partial_{x_2}\wt{B})V\\ 
	\vdots \\
	(\partial_{x_k}\wt{B})V\\
	\vdots\\
	(\partial_{x_n}\wt{B})V\\ 
	\end{array}
	\right))_{L^2}=(\wt{\wt{Q}}W, T_1)_{L^2}+(\wt{\wt{Q}}W, T_2)_{L^2}.
\]
By definition of $\wt{F}$ and $\nabla_x\wt{F}$ we easily see that 
\[
(\wt{\wt{Q}}W, T_1)_{L^2}=\sum_{k=1}^n\sum_{j=1}^n  (W_{(n+1)k+n(n+1)(j-1)}, \partial_{x_k}\partial_{x_j}f)_{L^2}.
\]
Since the coefficients $a_i$ are positive for $i=1, \dots, n$ we have that the energy $E(t)$ can be bounded from below as follows
\[
\sum_{k=1}^n\sum_{j=1}^n  \Vert W_{(n+1)k+n(n+1)(j-1)}\Vert_{L^2}^2\le E(t),
\]
thus
\beq
\label{est_T_1}
2(\wt{\wt{Q}}W, T_1)_{L^2}\le \sum_{k=1}^n\sum_{j=1}^n  \Vert W_{(n+1)k+n(n+1)(j-1)}\Vert_{L^2}^2 +\Vert f\Vert_{H^2}^2\le E(t)+\Vert f\Vert_{H^2}^2.
\eeq
We now want to estimate $2(\wt{\wt{Q}}W, T_2)_{L^2}$. Note that the matrices $\partial_{x_i}\wt{B}$ are defined by the second order derivatives of the coefficients and have only the rows with index $(j+1)(n+1)$, $j=0,\dots,n-1$, not identically zero. Hence, by direct computations and by arguing as in one-dimensional case we get that there exists a constant $c(n)>0$ such that 
\begin{multline}
\label{est_T_2}
2(\wt{\wt{Q}}W, T_2)_{L^2}\le  \sum_{k=1}^n\sum_{j=1}^n  \Vert W_{(n+1)k+n(n+1)(j-1)}\Vert_{L^2}^2\\
+c\max_{i=1,\dots,n, |\alpha|=2}\Vert \partial^\alpha a_i\Vert_{\infty}^2\sum_{k=1}^n\sum_{j=1}^n\Vert V_{k+(n+1)(j-1)}\Vert_{L^2}^2.
\end{multline}
Since by definition of $U$, $V$ and $W$,
\[
W_{(n+1)k+n(n+1)(j-1)}=\partial_t V_{k+(n+1)(j-1)},
\]
for $1\le k,j\le n$, by the fundamental theorem of calculus combined with Cauchy-Schwarz and the Minkowski's inequality in integral form, arguing as in \eqref{est_E_W_3} in the one-dimensional case, we can deduce from \eqref{est_T_2} the following inequality:
\begin{multline*}
2(\wt{\wt{Q}}W, T_2)_{L^2}\le E(t)+c\max_{i=1,\dots,n, |\alpha|=2}\Vert \partial^\alpha a_i\Vert_{\infty}^2\\
\sum_{k=1}^n\sum_{j=1}^n\biggl(2t^2\int_0^t\Vert W_{(n+1)k+n(n+1)(j-1)}(s)\Vert^2_{L^2}\, ds+2\Vert V_{k+(n+1)(j-1)}(0)\Vert_{L^2}^2\biggr)\\
\le E(t)+c(n,T)\max_{i=1,\dots,n, |\alpha|=2}\Vert \partial^\alpha a_i\Vert_{\infty}^2\biggl(\int_{0}^t E(s)\, ds+\sum_{k=1}^n\sum_{j=1}^n\Vert V_{k+(n+1)(j-1)}(0)\Vert_{L^2}^2\biggr)\\
\le E(t)+c(n,T)\max_{i=1,\dots,n, |\alpha|=2}\Vert \partial^\alpha a_i\Vert_{\infty}^2\biggl(\int_{0}^t E(s)\, ds+\Vert g_0\Vert_{H^2}^2\biggr).
\end{multline*}
Combining \eqref{est_T_1} with the estimate above we obtain that  
\begin{multline*}
2(\wt{\wt{Q}}W,\wt{\wt{F}})_{L^2}\le 2E(t)+\Vert f\Vert_{H^2}^2\\+ c_3(n,T, \max_{i=1,\dots,n, |\alpha|=2}\Vert \partial^\alpha a_i\Vert_{\infty}^2)\biggl(\int_0^t E(s)\, ds+\Vert g_0\Vert_{H^2}^2\biggr).
\end{multline*}
This proves assertion (iii).
\end{itemize}
\end{proof}
We can now apply the estimates of the previous proposition to 
\[
 \frac{dE(t)}{dt} =-\sum_{k=1}^n(\partial_{x_k}(\wt{\wt{Q}}\wt{\wt{A_k}})W,W)_{L^2}+((\wt{\wt{Q}}\wt{\wt{B}}+\wt{\wt{B}}^\ast\wt{\wt{Q}})W,W)_{L^2}+2(\wt{\wt{Q}}W,\wt{\wt{F}})_{L^2}.
\]
We get that there exists a constant $c'>0$ depending on $M,n,T$ and $\max_{i=1,\dots,n, |\alpha|=2}\Vert \partial^\alpha a_i\Vert_{\infty}^2$ such that 
\beq
\label{E_V_fin}
 \frac{dE(t)}{dt} \le c'\biggl(E(t)+\int_0^t E(s)\, ds+\Vert f(t)\Vert_{H^2}^2+\Vert g_0\Vert_{H^2}^2\biggl). 
\eeq
For the sake of the reader we now recall a Gr\"onwall's type lemma (Lemma 6.2 in \cite{ST}) that will be applied to the inequality \eqref{E_V_fin} in order to estimate the energy $E(t)$.  
\begin{lemma}
\label{lem_ST_G}
Let $\varphi\in C^1([0,T])$ and $\psi\in C([0,T])$ two positive functions such that 
\[
\varphi'(t)\le C_1\varphi(t)+C_2\int_0^t\varphi(s)\, ds+\psi(t),\qquad t\in[0,T],
\]
for some constants $C_1,C_2>0$. Then, there exists a constant $C=C(C_1,C_2,T)>0$ such that 
\[
\varphi(t)\le C\biggl(\varphi(0)+\int_0^t\psi(s)\, ds\biggl),
\]
for all $t\in[0,T]$.
\end{lemma}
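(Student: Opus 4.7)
The plan is to reduce the mixed inequality (which involves both $\varphi(t)$ and $\int_0^t\varphi$ on the right-hand side) to a single standard integral Gronwall inequality by integrating the given differential inequality once in $t$ and swapping the order of integration in the resulting double integral.

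First, I would integrate the hypothesis from $0$ to $t$. By the fundamental theorem of calculus applied to $\varphi'$ on the left, this yields
\[
\varphi(t)\le \varphi(0)+C_1\int_0^t\varphi(s)\,ds+C_2\int_0^t\!\!\int_0^s\varphi(r)\,dr\,ds+\int_0^t\psi(s)\,ds.
\]
Swapping the order of integration (Fubini) in the double integral gives $\int_0^t(t-r)\varphi(r)\,dr$, which is bounded above by $T\int_0^t\varphi(r)\,dr$. Introducing the shorthand $\widetilde{C}=C_1+C_2T$ and
\[
A(t)=\varphi(0)+\int_0^t\psi(s)\,ds,
\]
the inequality collapses to the classical form
\[
\varphi(t)\le A(t)+\widetilde{C}\int_0^t\varphi(s)\,ds,\qquad t\in[0,T].
\]

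Next, since $\psi\ge 0$ the function $A$ is non-decreasing, so the standard integral Gronwall lemma applies and gives $\varphi(t)\le A(t)\,e^{\widetilde{C}t}$. Taking $C:=e^{\widetilde{C}T}=e^{(C_1+C_2T)T}$, which depends only on $C_1,C_2,T$, we obtain
\[
\varphi(t)\le C\Bigl(\varphi(0)+\int_0^t\psi(s)\,ds\Bigr)
\]
for every $t\in[0,T]$, as required.

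There is no substantive obstacle here: the only conceptual point is recognizing that integrating the differential inequality once in $t$ and applying Fubini absorbs the $C_2\int_0^t\varphi(s)\,ds$ term into a constant multiple of $\int_0^t\varphi$, after which the classical Gronwall lemma closes the argument. Should one wish to avoid citing the classical Gronwall lemma, the same conclusion follows directly by setting $\Phi(t)=\int_0^t\varphi(s)\,ds$, observing that $\Phi'(t)-\widetilde{C}\Phi(t)\le A(t)$, and multiplying by the integrating factor $e^{-\widetilde{C}t}$.
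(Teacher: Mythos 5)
Your proof is correct. Note that the paper itself does not prove this lemma at all: it simply recalls it as Lemma 6.2 of Spagnolo--Taglialatela \cite{ST} and uses it as a black box, so there is no in-paper argument to compare against. Your route --- integrate the differential inequality once, use Fubini and $\varphi\ge 0$ to bound the resulting double integral $\int_0^t(t-r)\varphi(r)\,dr$ by $T\int_0^t\varphi(r)\,dr$, and then apply the classical integral Gr\"onwall inequality with the non-decreasing majorant $A(t)=\varphi(0)+\int_0^t\psi(s)\,ds$ --- is a clean, self-contained derivation, and the resulting constant $C=e^{(C_1+C_2T)T}$ has exactly the dependence on $C_1,C_2,T$ (exponential) that the paper later relies on when it remarks that ``the constant $C$ depends exponentially on $C_1, C_2$ and $T$.'' The only cosmetic point is that positivity of $\varphi$ is used twice (to bound $t-r$ by $T$ inside the double integral and to ensure $A$ is non-decreasing via $\varphi(0)\ge 0$); both uses are legitimate under the stated hypotheses.
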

Note that the constant $C$ depends exponentially on $C_1, C_2$ and $T$, so there exists a constant $C'>0$ depending exponentially on $M,n,T$ and $\max_{i=1,\dots,n, |\alpha|=2}\Vert \partial^\alpha a_i\Vert_{\infty}^2$ such that 
\beq
\label{est_E_W}
E(t)\le C'\biggl(E(0)+\int_0^t \Vert f(s)\Vert_{H^2}^2\, ds+\Vert g_0\Vert_{H^2}^2\biggr).
\eeq
We recall that by construction of the energy $E(t)=(\wt{\wt{Q}}W,W)$ we have the bound from below
\beq
\label{bfb_W}
\sum_{k=1}^n\sum_{j=1}^n\Vert W_{(n+1)k+n(n+1)(j-1)})\Vert_{L^2}^2\le E(t).
\eeq
Combining \eqref{bfb_W} with \eqref{est_E_W} we can write
\[
\sum_{k=1}^n\sum_{j=1}^n\Vert W_{(n+1)k+n(n+1)(j-1)}(t)\Vert_{L^2}^2\le C'\biggl(E(0)+\int_0^t \Vert f(s)\Vert_{H^2}^2\, ds+\Vert g_0\Vert_{H^2}^2 \biggr).
\]
Hence, there exists a constant $C''(n,M,\max_{i=1,\dots,n}\Vert a_i\Vert_{\infty}, \max_{i=1,\dots,n, |\alpha|=2}\Vert \partial^\alpha a_i\Vert_{\infty}^2,T)>0$ such that 
\beq
\label{est_W_1}
\begin{split}
&\sum_{k=1}^n\sum_{j=1}^n\Vert W_{(n+1)k+n(n+1)(j-1)}(t)\Vert_{L^2}^2\le C'' \biggl(\sum_{k=1}^{n^2(n+1)}\Vert W_k(0)\Vert_{L^2}^2+\int_0^t \Vert f(s)\Vert_{H^2}^2\, ds+\Vert g_0\Vert_{H^2}^2\biggr)\\
&\le C''\biggl(\Vert g_0\Vert_{H^3}^2+\Vert g_1\Vert_{H^2}^2+\int_0^t \Vert f(s)\Vert_{H^2}^2\, ds).
\end{split}
\eeq
Note that in the estimate above we have used the definition of the initial data $W(0)$. 
Since by definition of $U$, $V$ and $W$,
\[
W_{(n+1)k+n(n+1)(j-1)}=\partial_t V_{k+(n+1)(j-1)},
\]
for $1\le k,j\le n$, arguing as in the proof of Proposition \ref{prop_est_U} we can transfer the estimate \eqref{est_W_1} from $W$ to $V$. More precisely,for a suitable constant 
\[
C''=C''(n,M,\max_{i=1,\dots,n}\Vert a_i\Vert_{\infty}, \max_{i=1,\dots,n, |\alpha|=2}\Vert \partial^\alpha a_i\Vert_{\infty}^2,T)>0,
\]
we have that
\beq
\label{est_V_complete}
\sum_{k=1}^n\sum_{j=1}^n\Vert V_{k+(n+1)(j-1)}(t)\Vert_{L^2}^2\le C''\biggl(\Vert g_0\Vert_{H^3}^2+\Vert g_1\Vert_{H^2}^2+\int_0^t \Vert f(s)\Vert_{H^2}^2\, ds\biggl),
\eeq
for all $t\in[0,T]$. Combining \eqref{est_V_comp} with \eqref{est_V_complete} we have found an estimate for $\Vert V\Vert^2_{L^2}$ and recalling that $V=\nabla U$ we conclude that there exists a constant
\[
C_1=C_1(n,M,\max_{i=1,\dots,n}\Vert a_i\Vert_{\infty}, \max_{i=1,\dots,n, |\alpha|=2}\Vert \partial^\alpha a_i\Vert_{\infty}^2,T)>0
\]
such that 
\beq
\label{est_U_H_1}
\Vert U(t)\Vert_{H^1}^2\le C_1\biggl(\Vert g_0\Vert_{H^3}^2+\Vert g_1\Vert_{H^2}^2+\int_0^t \Vert f(s)\Vert_{H^2}^2\, ds\biggl),
\eeq
 for all $t\in[0,T]$. The constant $C_1$ depends linearly on $\max_{i=1,\dots,n}\Vert a_i\Vert_{\infty}$ and exponentially on all the rest. As in the one-dimensional case (see \cite{ST}), this argument can be easily iterated taking an extra derivative with respect to $x$ at every step. One gets systems with the same structure where the right-hand side depends on higher order derivatives of the coefficients $a_j$. More precisely, if we want to estimate the $H^k$-norm of $U(t)$ then we will derive the coefficients $a_j$ up to order $k+1$. We therefore have the following proposition.
 \begin{proposition}
 \label{prop_est_Sob_k}
 Assume that the coefficients $a_i\ge 0$ are smooth and bounded with bounded derivatives of any order. Then, for all $k\in\N$ there exists a constant $C_k$ depending on $T$, $M$ and the $L^\infty$-norms of the derivatives of the coefficients up to order $k+1$ such that
 \beq
\label{est_U_H_k}
\Vert U(t)\Vert_{H^k}^2\le C_k\biggl(\Vert g_0\Vert_{H^{k+2}}^2+\Vert g_1\Vert_{H^{k+1}}^2+\int_0^t \Vert f(s)\Vert_{H^{k+1}}^2\, ds\biggl),
\eeq
for all $t\in[0,T]$.
 \end{proposition}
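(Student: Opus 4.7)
The plan is to argue by induction on $k$. The cases $k=0$ and $k=1$ are already covered by the $L^2$-estimate on $U$ derived from the system in $U$ and by \eqref{est_U_H_1} derived from the analysis of the system in $W$. For the inductive step, set $V^{(0)}=U$, $V^{(1)}=V$, $V^{(2)}=W$, and in general $V^{(j)}=\nabla_x V^{(j-1)}$. The first task is to verify that $V^{(j)}$ satisfies a first-order system of the same structural type as \eqref{system_V} and \eqref{system_W}, namely
\[
\partial_t V^{(j)}=\sum_{k=1}^{n}\wt{A_k}^{(j)}(x)\partial_{x_k}V^{(j)}+\wt{B}^{(j)}(x)V^{(j)}+\wt{F}^{(j)},
\]
where $\wt{A_k}^{(j)}$ is block-diagonal with $n^{j-1}$ copies of $A_k$, $\wt{B}^{(j)}$ is built out of first-order derivatives of the $A_k$ plus the lower-order matrix $\wt{B}^{(j-1)}$ along the diagonal, and $\wt{F}^{(j)}$ contains $\nabla_x^{\,j}F$ together with contributions of the form $(\partial^{\alpha}A_k)V^{(j-\ell)}$ with $\ell\ge 1$ and $|\alpha|\le\ell+1$. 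This is an immediate computation: differentiate the system for $V^{(j-1)}$ once in each $x_i$ and stack the resulting vectors.

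Next, I would introduce the block-diagonal symmetriser $\wt{Q}^{(j)}$ with $n^{j-1}$ copies of $Q$ and the energy
\[
E^{(j)}(t)=(\wt{Q}^{(j)}V^{(j)},V^{(j)})_{L^2},
\]
which inherits a lower bound analogous to \eqref{bfb_W} on the sum of the $L^2$-norms of the components of $V^{(j)}$ corresponding to the $(n+1)$-th slot in each block. Differentiating $E^{(j)}$ in $t$ and integrating by parts (as in \eqref{energy_V_dt}) produces three terms: the principal commutator $-\sum_k(\partial_{x_k}(\wt{Q}^{(j)}\wt{A_k}^{(j)})V^{(j)},V^{(j)})_{L^2}$ and the lower-order commutator $((\wt{Q}^{(j)}\wt{B}^{(j)}+(\wt{B}^{(j)})^{\ast}\wt{Q}^{(j)})V^{(j)},V^{(j)})_{L^2}$ are both bounded by $c\,E^{(j)}(t)$ via Glaeser's inequality, exactly as in Propositions \ref{estimates_final} and \ref{estimates_final_W}. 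The forcing term $2(\wt{Q}^{(j)}V^{(j)},\wt{F}^{(j)})_{L^2}$ splits into a $\nabla_x^{\,j}F$ contribution, which produces $\|f(t)\|_{H^{j+1}}^2$ plus a multiple of $E^{(j)}(t)$, and into lower-order contributions $(\partial^{\alpha}A_k)V^{(j-\ell)}$ that must be reduced to quantities already under control.

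To handle those lower-order contributions, I would use the fundamental theorem of calculus, writing components of $V^{(j-\ell)}(t)$ as $V^{(j-\ell)}(0)+\int_0^t\partial_tV^{(j-\ell)}(s)\,ds$ and recognising that every $\partial_tV^{(j-\ell)}_i$ is, by the system for $V^{(j-\ell)}$, a linear combination of derivatives of lower-order $V^{(j-\ell')}$ with $\ell'>\ell$ whose $L^2$-norms are controlled (by induction) by the energies $E^{(j-\ell')}(s)$ appearing with coefficients of the form $\Vert \partial^{\alpha}a_i\Vert_\infty$ for $|\alpha|\le k+1$. Combined with Minkowski's integral inequality, this leads to a differential inequality of the form
\[
\frac{dE^{(j)}(t)}{dt}\le C_1E^{(j)}(t)+C_2\int_0^tE^{(j)}(s)\,ds+\psi(t),
\]
where $\psi(t)$ depends only on $\|f(t)\|_{H^{k+1}}^2$, $\|g_0\|_{H^{k+2}}^2$, $\|g_1\|_{H^{k+1}}^2$ and the already-controlled lower-order energies. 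Applying Lemma \ref{lem_ST_G} gives the desired bound on $E^{(j)}(t)$, and then (as in Proposition \ref{prop_est_U}) the fundamental theorem of calculus transfers this bound from the $(n+1)$-slots of $V^{(j)}$ to all the components, yielding finally the estimate \eqref{est_U_H_k} on $\|U(t)\|_{H^k}$.

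The main obstacle is not the core energy computation, which is an essentially mechanical iteration of the $H^1$ argument, but the combinatorial bookkeeping: one must verify inductively that $\wt{B}^{(j)}$, $\wt{F}^{(j)}$ and the various lower-order remainders involve derivatives of the coefficients $a_i$ only up to order $k+1$, and that every lower-order quantity appearing in the forcing can be traced back through the FTC chain to an earlier $E^{(j-\ell)}(s)$ already controlled by the inductive hypothesis, so that Lemma \ref{lem_ST_G} applies with constants of the claimed dependence.
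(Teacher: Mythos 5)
Your proposal follows essentially the same route as the paper, which itself only carries out the cases $k=0,1$ explicitly (the systems in $U$, $V$, $W$) and then asserts that the iteration $V^{(j)}=\nabla_x V^{(j-1)}$ with block-diagonal symmetrisers, Glaeser's inequality, the fundamental theorem of calculus for the lower-order remainders, and Lemma \ref{lem_ST_G} goes through; your write-up is a correct and in fact more detailed account of that iteration. The only slip is in the direction of the FTC chain: the paper bounds a component of $V^{(j-\ell)}(t)$ by $V^{(j-\ell)}(0)$ plus $\int_0^t$ of a component of the \emph{higher}-order vector (using $\partial_t V^{(j-1)}_{i}=V^{(j)}_{(n+1)\text{-slot}}$), which is controlled by the \emph{current} energy $E^{(j)}(s)$ and hence yields the $\int_0^t E^{(j)}(s)\,ds$ term fed into Lemma \ref{lem_ST_G}, rather than by lower-order energies as you state.
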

 Note that this is an extension to any space dimension of the estimate (1.32) in \cite{ST} in the case $m=2$ for global well-posedness.

\subsection{Existence and uniqueness result}
We conclude this section by proving that the Cauchy problem \eqref{CP_wave}
\[
\begin{split}
\partial_t^2u-\sum_{i=1}^na_i(x)\partial^2_{x_i} u&=f(t,x),\quad t\in[0,T], x\in\R^n,\\
u(0,x)&=g_0,\\
\partial_tu(0,x)&=g_1,
\end{split}
\]
is well-posed in every Sobolev space and therefore in $C^\infty$. It will be crucial to employ the estimate \eqref{est_U_H_k} which can be re-written in terms of $u$ as
\beq
\label{est_u_H_k}
\Vert u(t)\Vert_{H^{k+1}}^2\le C_k\biggl(\Vert g_0\Vert_{H^{k+2}}^2+\Vert g_1\Vert_{H^{k+1}}^2+\int_0^t \Vert f(s)\Vert_{H^{k+1}}^2\, ds\biggl),
\eeq
\begin{theorem}
\label{theo_main}
Let the coefficients $a_i\ge 0$ be smooth and bounded with bounded derivatives of any order. Assume that $f\in C([0,T], H^{\infty})$. Then, the Cauchy problem \eqref{CP_wave} is well-posed in every Sobolev space $H^k$, with $k\in\N$, and 
or all $k\in\N$ there exists a constant $C_k$ depending on $T$, $M$ and the $L^\infty$-norms of the derivatives of the coefficients up to order $k+1$ such that
 \beq
\label{est_u_H_k_1}
\Vert u(t)\Vert_{H^{k+1}}^2\le C_k\biggl(\Vert g_0\Vert_{H^{k+2}}^2+\Vert g_1\Vert_{H^{k+1}}^2+\int_0^t \Vert f(s)\Vert_{H^{k+1}}^2\, ds\biggl),
\eeq
for all $t\in[0,T]$.
\end{theorem}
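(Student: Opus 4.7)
\medskip

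\textbf{Proof plan for Theorem \ref{theo_main}.}

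The energy estimate \eqref{est_u_H_k_1} has essentially already been established: it is exactly the content of Proposition \ref{prop_est_Sob_k} rewritten in terms of $u$ via $U=(\partial_{x_1}u,\dots,\partial_{x_n}u,\partial_t u)^T$ (where $\|u(t)\|_{H^{k+1}}$ is controlled by $\|U(t)\|_{H^k}$ together with $\|u(t)\|_{L^2}$, the latter being obtained by the fundamental theorem of calculus applied to $\partial_t u=U_{n+1}$ and absorbed into the constant $C_k$). Thus the a priori estimate is granted as soon as a sufficiently regular solution exists. My plan is therefore to prove (a) uniqueness directly from \eqref{est_u_H_k_1}, and (b) existence by a vanishing-viscosity/strict-hyperbolicity perturbation, passing to the limit via the same estimate applied \emph{uniformly} in the perturbation parameter.

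\emph{Uniqueness.} Suppose $u_1,u_2$ are two $C([0,T],H^{k+2})\cap C^1([0,T],H^{k+1})$ solutions of \eqref{CP_wave} with the same data $g_0,g_1$ and right-hand side $f$. Then $w=u_1-u_2$ solves the homogeneous problem with zero Cauchy data and $f\equiv 0$. Applying \eqref{est_u_H_k_1} to $w$ forces $\|w(t)\|_{H^{k+1}}=0$ for all $t\in[0,T]$, giving $u_1=u_2$. Since $k$ was arbitrary this yields uniqueness in $C^\infty$.

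\emph{Existence.} I will perturb the coefficients to make the problem strictly hyperbolic. Fix $\delta>0$ and set $a_i^\delta(x)=a_i(x)+\delta$. Each $a_i^\delta$ is smooth, strictly positive, bounded, and has the \emph{same} derivatives of order $\geq 1$ as $a_i$; in particular the Glaeser constant $M$ and all the quantities $\max_i\|\partial^\alpha a_i\|_\infty$ (for $|\alpha|\geq 1$) appearing on the right of \eqref{est_u_H_k_1} are unchanged, and $\max_i\|a_i^\delta\|_\infty$ is bounded uniformly for $\delta\in(0,1]$. The perturbed Cauchy problem is strictly hyperbolic with smooth coefficients, so it admits a unique smooth solution $u^\delta\in C^\infty([0,T]\times\R^n)$ by the classical theory of strictly hyperbolic second-order equations (existence here is standard, e.g.\ via reduction to a symmetric hyperbolic first-order system with the symmetriser $Q^\delta=\mathrm{diag}(a_1^\delta,\dots,a_n^\delta,1)$ which is now \emph{uniformly positive definite}). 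Crucially, because the constants $C_k$ in \eqref{est_u_H_k_1} depend only on $T$, $M$, and $\sup_{\delta,i,|\alpha|\leq k+1}\|\partial^\alpha a_i^\delta\|_\infty$, the family $(u^\delta)_{\delta\in(0,1]}$ is uniformly bounded in $C([0,T],H^{k+1})$ for every $k$. Differentiating the equation in $t$ and using $\partial_t^2u^\delta=\sum_ia_i^\delta\partial_{x_i}^2u^\delta+f$ I also get uniform bounds on $\partial_t u^\delta$ and $\partial_t^2u^\delta$ in the corresponding Sobolev scales.

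\emph{Passage to the limit.} From the uniform bounds and the Banach--Alaoglu theorem, a subsequence $u^{\delta_j}$ converges weakly-$*$ in $L^\infty([0,T],H^{k+1})$; combined with the uniform bound on $\partial_t u^{\delta_j}$ and Aubin--Lions, one obtains strong convergence in $C([0,T],H^{k+1}_{\rm loc})$. Since $a_i^{\delta_j}\to a_i$ uniformly on compacts and $u^{\delta_j}\to u$ strongly with its two space-derivatives converging in $L^2_{\rm loc}$, the limit $u$ satisfies \eqref{CP_wave} in the sense of distributions, and by the uniform $H^{k+1}$-bounds $u\in C([0,T],H^{k+1})$ for every $k$, hence $u\in C^\infty$. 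The estimate \eqref{est_u_H_k_1} for $u$ follows by passing to the liminf in the corresponding estimate for $u^{\delta_j}$. The main (and only mildly delicate) obstacle is this passage to the limit: I need the constant $C_k$ to be genuinely $\delta$-independent, which is why the proof of Proposition \ref{prop_est_Sob_k} was set up to depend only on $\|a_i\|_\infty$, $M$, and derivatives of the $a_i$ of order $\geq 1$, none of which deteriorate under the shift $a_i\mapsto a_i+\delta$. Everything else is standard compactness.
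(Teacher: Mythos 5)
Your proposal is correct and follows essentially the same route as the paper: uniqueness from the a priori estimate \eqref{est_u_H_k_1}, and existence via the strictly hyperbolic perturbation $a_i\mapsto a_i+\delta$, exploiting that the constants in the energy estimate are $\delta$-independent, followed by a compactness argument. Your treatment of the limit (weak-$*$ bounds plus Aubin--Lions to get strong convergence) is in fact somewhat more careful than the paper's, which extracts a ``convergent subsequence'' directly from $L^2$-boundedness.
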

\begin{proof}

\leavevmode
\begin{itemize}
 \item[(i)] {\bf Existence}. Let 
 \[
 P(u)=\partial^2_tu-\sum_{i=1}^na_i(x)\partial^2_{x_i} u.
 \]
 Assume that $f\in C([0,T], H^1(\R^n))$. The strictly hyperbolic Cauchy problem 
 \[
 \begin{split}
 P_\delta(u)&=\partial^2_tu-\sum_{i=1}^n(a_i(x)+\delta)\partial^2_{x_i} u=f\\
 u(0,x)&=g_0(x)\in H^2(\R^n),\\
\partial_t u(0,x)&=g_1(x)\in H^1(\R^n),
\end{split}
 \]
 has a unique solution $(u_\delta)_\delta$ defined via the corresponding vector $(U_\delta)_\delta$. Since, the constant $C=C(t, n, \Vert a_1+\delta\Vert_{L^\infty}, \Vert a_2+\delta\Vert_{L^\infty}, M)>0$ can be chosen independent of $\delta\in(0,1)$ we have that, given $g_0(x)\in H^2(\R^n)$ and $g_1(x)\in H^1(\R^n)$, the net 
 \[
 U_\delta=(U_{1,\delta}, U_{2,\delta}, \cdots, U_{n,\delta}) 
 \]
 is bounded in $(L^2(\R^n))^{n+1}$. Therefore there exists a convergent subsequent in $(L^2(\R^n))^{n+1}$ with limit $U\in (L^2(\R^n))^{n+1}$ solving the system 
\[
\begin{split}
\partial_tU&=\sum_{k=1}^nA_k(x)\partial_{x_k} U+F,\\
U(0,x)&=(\partial_{x_1}g_0,\partial_{x_2}g_0,\cdots, \partial_{x_n}g_0, g_1)^{T},
\end{split}
\]
in the sense of distributions. $U$ gives a solution $u\in C^2([0,T], H^1(\R^n))$ of the original Cauchy problem \eqref{CP_wave}. The same argument can be iterated (as for the systems in $V$ and $W$) to show that if $g_0\in H^{k+2}(\R^n)$, $g_1\in H^{k+1}(\R^n)$ and $f\in C([0,1], H^{k+1}(\R^n))$ then \eqref{CP_wave} has a solution  $u\in C^2([0,T], H^{k+1}(\R^n))$.

 \item[(ii)] {\bf Uniqueness}. The uniqueness of the solution $u$ follows immediately from the estimate \eqref{est_u_H_k_1}. 
 \end{itemize}

\end{proof}
The following result of $C^\infty$ well-posedness is a straightforward corollary. It is consistent with Oleinik's result \cite{O70} and it is an extension to any space dimension of the one well-posedness result obtained in \cite{ST}. Note that since we take here bounded coefficients we have global well-posedness rather than local well-posedness.
\begin{corollary}
\label{cor_C_infty}
Let the coefficients $a_i\ge 0$ be smooth and bounded with bounded derivatives of any order and let $f\in C([0,T], C^\infty_c(\R^n))$. Then the Cauchy problem \eqref{CP_wave} is $C^\infty$ well-posed, i.e., given $g_0,g_1\in C^\infty_c(\R^n)$ there exists a unique solution $C^2([0,1], C^\infty(\R^n))$ of 
\[
\begin{split}
\partial_t^2u-\sum_{i=1}^na_i(x)\partial^2_{x_i} u&=f(t,x),\quad t\in[0,T], x\in\R^n,\\
u(0,x)&=g_0,\\
\partial_tu(0,x)&=g_1.
\end{split}
\]
Moreover, the estimate \eqref{est_u_H_k_1} holds for every $k\in\N$.
\end{corollary}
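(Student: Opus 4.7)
The plan is to deduce the corollary from Theorem \ref{theo_main} by exploiting the fact that $C^\infty_c(\R^n)\subset H^s(\R^n)$ for every $s\in\N$, so that all Sobolev hypotheses are simultaneously satisfied. First I would fix $g_0,g_1\in C^\infty_c(\R^n)$ and $f\in C([0,T],C^\infty_c(\R^n))$, and observe that both the initial data and the right-hand side belong to every Sobolev class we might need: $g_0\in H^{k+2}(\R^n)$, $g_1\in H^{k+1}(\R^n)$, and $f\in C([0,T],H^{k+1}(\R^n))$ for every $k\in\N$.

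Next, for each fixed $k\in\N$, I would apply Theorem \ref{theo_main} to obtain a solution $u^{(k)}\in C^2([0,T],H^{k+1}(\R^n))$ together with the energy estimate \eqref{est_u_H_k_1}. The key point is then uniqueness: since Theorem \ref{theo_main} gives uniqueness in each Sobolev class, any two solutions $u^{(k)}$ and $u^{(k')}$ with $k<k'$ both belong to $C^2([0,T],H^{k+1}(\R^n))$ and solve the same Cauchy problem, hence they coincide. Consequently there is a single function $u$ lying in $\bigcap_{k\in\N}C^2([0,T],H^{k+1}(\R^n))$.

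Having this, I would invoke the Sobolev embedding theorem: the intersection $\bigcap_{k\in\N}H^{k}(\R^n)$ is contained in $C^\infty(\R^n)$ (and locally $H^{k+1}\hookrightarrow C^{k+1-\lfloor n/2\rfloor-1}$), so $u(t,\cdot)\in C^\infty(\R^n)$ for every $t\in[0,T]$ and, by the continuity up to order two in $t$ for every Sobolev level, $u\in C^2([0,T],C^\infty(\R^n))$. The estimate \eqref{est_u_H_k_1} is inherited immediately since it was established for each $k$ separately in Theorem \ref{theo_main}. The only mildly subtle point — and the closest thing to an obstacle here — is ensuring that the constants $C_k$ and the energy argument do not require compact support of the solution itself; since we already assumed compact support of $g_0,g_1,f(t,\cdot)$, finite speed of propagation (which was used to justify the integration by parts in Sections \ref{sec_CP_smooth}) guarantees that $u(t,\cdot)$ has compact support for each $t\in[0,T]$, so the Sobolev estimates apply globally and the proof is complete.
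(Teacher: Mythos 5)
Your argument is correct and is precisely the route the paper intends: the paper presents this as a ``straightforward corollary'' of Theorem \ref{theo_main}, obtained by noting that $C^\infty_c(\R^n)\subset H^s(\R^n)$ for every $s$, applying the theorem at each Sobolev level, using uniqueness to identify the solutions across levels, and concluding via Sobolev embedding, with finite speed of propagation justifying the compact-support/integration-by-parts issues exactly as you note. Nothing further is needed.
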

Note that by finite speed of propagation it is possible to remove the assumption of compact support on $f$ and the initial data.

\section{Very weak well-posedness}
\label{sec_CP_vw}
The final part of the paper is devoted to the case of non-regular coefficients. More precisely, we assume that 
\begin{itemize}
\item $a_i\in\E'(\R^n)$ with $a_i\ge 0$ for all $i=1,\dots ,n$,
\item $f\in C^\infty([0,T],\E'(\R^n))$,
\item $g_0,g_1\in\E'(\R^n)$.
\end{itemize}
Making use of the well-posedness result obtained in the previous section and of the regularisation techniques introduced at the beginning of the paper we will prove that the Cauchy problem
\[
\begin{split}
\partial_t^2u-\sum_{i=1}^na_i(x)\partial^2_{x_i} u&=f(t,x),\quad t\in[0,T], x\in\R^n,\\
u(0,x)&=g_0,\\
\partial_tu(0,x)&=g_1.
\end{split}
\]
is very weakly well-posed. 

Let $\varphi$ be a mollifier, i.e., $\varphi\in\Cinfc(\R^n)$ with $\int \varphi(x)\, dx=1$. Assume in addition that $\varphi\ge 0$. Let $\omega(\eps)$ be a positive scale (see Remark \ref{rem_net}). In the sequel, we will use the brief notations
\[
\begin{split}
a_{i,\eps}(x)&=(a_i\ast\varphi_{\omega(\eps)})(x),\qquad i=1,\dots,n,\\
f_\eps(t,x)&=(f(t,\cdot)\ast\varphi_{\eps})(x),\\
g_{0,\eps}(x)&=(g_0\ast\varphi_{\eps})(x),\\
g_{1,\eps}(x)&=(g_1\ast\varphi_{\eps})(x).\\
\end{split}
\]
By combining Proposition \ref{prop_reg_nets} and Proposition \ref{prop_nets_asymp_2} we obtain the following moderateness result.
\begin{proposition}
\label{prop_mod_Cauchy}
Let $\varphi$ be a mollifier with $\varphi\ge 0$. Let $\omega(\eps)$ be a positive scale. Then,  for all $i=1,\dot,n$ 
\begin{itemize}
\item[(i)]  $a_{i,\eps}\ge 0$ and there exists $N\in\N$ and for all $\alpha\in\N^n$ there exists $c>0$ such that 
\[
|\partial^\alpha a_{i,\eps}(x)|\le c\,\omega(\eps)^{-N-|\alpha|},
\]
for all $x\in\R^n$ and $\eps\in(0,1]$;
\item[(ii)] there exists $N'\in\N$ and for all $k\in\N$ and $\alpha\in\N^n$ there exists $c'>0$ such that 
\[
|\partial^k_t\partial_x^\alpha f_\eps(t,x)|\le c'\,\omega(\eps)^{-N'-|\alpha|},
\]
for all $t\in[0,T]$, $x\in\R^n$ and $\eps\in(0,1]$;

\item[(iii)] for all $j=0,1$ there exists $N_j\in\N$ and for all $\alpha\in\N^n$ there exists $c_j>0$ such that 
\[
|\partial^\alpha g_j(x)|\le c_j\,\omega(\eps)^{-N_j-|\alpha|},
\]
for all $x\in\R^n$ and $\eps\in(0,1]$.
\end{itemize}
\end{proposition}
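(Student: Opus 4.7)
The plan is to reduce all three statements to the compactly supported moderateness bound of Proposition~\ref{prop_reg_nets}(i), combined with the observation that convolution in the spatial variable commutes with $t$-differentiation and preserves non-negativity.

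First I would dispatch (i) and (iii) together, since both $a_i$ and the initial data $g_0,g_1$ lie in $\E'(\R^n)$ by assumption. A direct application of Proposition~\ref{prop_reg_nets}(i) (with $\omega(\eps)$ in place of the generic scale, as justified by Remark~\ref{rem_net}) yields immediately the bounds $|\partial^\alpha a_{i,\eps}(x)|\le c\,\omega(\eps)^{-N-|\alpha|}$ and $|\partial^\alpha g_{j,\eps}(x)|\le c_j\,\omega(\eps)^{-N_j-|\alpha|}$. The only extra ingredient for (i) is the sign condition $a_{i,\eps}\ge 0$, which follows because the mollifier $\varphi$ is chosen non-negative: the convolution of the positive distribution $a_i\in\E'(\R^n)$ with the non-negative test function $\varphi_{\omega(\eps)}$ produces a non-negative smooth function, as one checks by pairing against an arbitrary non-negative test function and invoking Fubini.

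For (ii), the key point is the interplay between the $t$-smoothness and the compact support in $x$. Since $f\in C^\infty([0,T],\E'(\R^n))$, each $\partial_t^k f(t,\cdot)$ is again a compactly supported distribution depending smoothly on $t$, and differentiation in $t$ commutes with convolution in $x$; hence
\[
\partial_t^k\partial_x^\alpha f_\eps(t,x)=\bigl((\partial_t^k f)(t,\cdot)\ast \partial^\alpha\varphi_{\omega(\eps)}\bigr)(x).
\]
Applying Proposition~\ref{prop_reg_nets}(i) pointwise in $t$ would give a bound of the form $c(t,k)\,\omega(\eps)^{-N(t,k)-|\alpha|}$; the work is to upgrade this to uniformity in $t\in[0,T]$. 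This is the main (and only) technical obstacle.

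To handle that uniformity I would argue that, for each fixed $k$, the map $t\mapsto \partial_t^k f(t,\cdot)$ is continuous from the compact interval $[0,T]$ into $\E'(\R^n)$, so its image is a bounded subset of $\E'(\R^n)$. By standard structure theory, such a bounded set is supported in a common compact $K\Subset\R^n$ and has uniformly bounded order $N'=N'(k)$; writing each element as a finite sum of derivatives of continuous functions with constants bounded uniformly in $t$ then gives the estimate $|\partial_t^k\partial_x^\alpha f_\eps(t,x)|\le c'\,\omega(\eps)^{-N'-|\alpha|}$ uniformly in $t\in[0,T]$ and $x\in\R^n$. This is essentially the parameterised version of the argument underlying Proposition~\ref{prop_nets_asymp_2} and the localisation procedure recalled in Remark~\ref{rem_non_compact}, and with it in hand all three claims of the proposition are established.
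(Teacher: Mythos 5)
Your proposal is correct and follows exactly the route the paper intends: the paper states this proposition without proof as an immediate combination of Proposition \ref{prop_reg_nets}(i) (with Remark \ref{rem_net}) and the parametrised regularisation of Proposition \ref{prop_nets_asymp_2}, which is precisely what you carry out, including the non-negativity of $a_{i,\eps}$ from $\varphi\ge 0$ and the uniformity in $t$ via equicontinuity of the bounded image of $[0,T]$ in $\E'(\R^n)$. The only remark is that the pointwise sign of $a_{i,\eps}(x)=\langle a_i,\varphi_{\omega(\eps)}(x-\cdot)\rangle$ follows directly from evaluating the positive distribution on the non-negative function $\varphi_{\omega(\eps)}(x-\cdot)$, with no need for the Fubini detour.
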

By applying Proposition \ref{prop_Glaeser} to $a_{i,\eps}$ we obtain the following Glaeser's inequality: there exists $N\in\N$ and a constant $c>0$ such that 
\beq
\label{Glaeser_eps}
|\partial_{x_j}a_{i,\eps}(x)|^2\le 2c\,\omega(\eps)^{-N-2}a_{i,\eps}(x)
\eeq
for all $i,j=1,\dots,n$, $x\in\R^n$ and $\eps\in(0,1]$. In other words, the constant $M>0$ appearing in the classical Glaeser's inequality is here depending on $\eps$ and equal to 
\beq
\label{M_eps}
c\,\omega(\eps)^{-N-2}.
\eeq

We can now apply Theorem \ref{theo_main} to the regularised Cauchy problem 
\beq
\label{CP_eps_last}
\begin{split}
\partial_t^2u-\sum_{i=1}^na_{i,\eps}(x)\partial^2_{x_i} u&=f_\eps(t,x),\quad t\in[0,T], x\in\R^n,\\
u(0,x)&=g_{0,\eps}\\
\partial_tu(0,x)&=g_{1,\eps}.
\end{split}
\eeq
and obtain immediately very weak well-posedness provided that we choose the net $\omega$ suitably. More precisely, we get the following result at the net-level.
\begin{proposition}
\label{prop_net_vww}
Under the assumptions above, the Cauchy problem \eqref{CP_eps_last} has a unique solution $(u_\eps)_\eps\in C^2([0,1], C^\infty(\R^n))^{(0,1]}$ such that 
for all $k\in\N$ there exists a net $C_{\eps,k}>0$ depending on $T$, $\omega(\eps)^{-N-2}$ as in \eqref{M_eps} and the $L^\infty$-norms of the derivatives of the coefficients $a_{i,\eps}$ up to order $k+1$ such that
 \beq
\label{est_u_H_k}
\Vert u_\eps(t)\Vert_{H^{k+1}}^2\le C_{k,\eps}\biggl(\Vert g_{0,\eps}\Vert_{H^{k+2}}^2+\Vert g_{1,\eps}\Vert_{H^{k+1}}^2+\int_0^t \Vert f_\eps(s)\Vert_{H^{k+1}}^2\, ds\biggl),
\eeq
for all $t\in[0,T]$ and $\eps\in(0,1]$.

\end{proposition}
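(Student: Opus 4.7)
The plan is to deduce Proposition \ref{prop_net_vww} by applying the smooth well-posedness result Theorem \ref{theo_main} pointwise in $\eps\in(0,1]$ to the regularised Cauchy problem \eqref{CP_eps_last}, and then to read off the dependence of the resulting constant on $\eps$ from the moderateness estimates of Proposition \ref{prop_mod_Cauchy} and the $\eps$-dependent Glaeser inequality \eqref{Glaeser_eps}.

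First I would check that the hypotheses of Theorem \ref{theo_main} are met at each fixed $\eps$. By Proposition \ref{prop_mod_Cauchy}(i), the regularised coefficients $a_{i,\eps}$ belong to $C^\infty(\R^n)$, satisfy $a_{i,\eps}\geq 0$ (this uses $\varphi\geq 0$), and have bounded derivatives of every order; similarly Proposition \ref{prop_mod_Cauchy}(ii)--(iii) guarantee $f_\eps\in C^\infty([0,T],H^\infty(\R^n))$ and $g_{j,\eps}\in H^\infty(\R^n)$ for $j=0,1$ (after localising via cut-offs if needed, as in Remark \ref{rem_non_compact}). Therefore Theorem \ref{theo_main} applies to \eqref{CP_eps_last} for each fixed $\eps\in(0,1]$, yielding a unique solution $u_\eps\in C^2([0,T],H^{k+1}(\R^n))$ for every $k\in\N$; a standard Sobolev embedding argument then places $u_\eps\in C^2([0,T],C^\infty(\R^n))$.

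Next I would track the constant. Theorem \ref{theo_main} provides, for every $k$, an estimate of the form \eqref{est_u_H_k_1} with constant $C_k$ depending on $T$, on the Glaeser bound $M$ for the second derivatives of the coefficients, and on the $L^\infty$-norms of the derivatives of the coefficients up to order $k+1$. In the regularised setting, all these quantities are now $\eps$-dependent: the Glaeser constant becomes $M_\eps=c\,\omega(\eps)^{-N-2}$ by \eqref{Glaeser_eps} and \eqref{M_eps}, and the $L^\infty$-norms of $\partial^\alpha a_{i,\eps}$ are controlled by Proposition \ref{prop_mod_Cauchy}(i). Substituting these into the expression for $C_k$ produces an $\eps$-dependent constant $C_{k,\eps}>0$ with precisely the dependence asserted in the statement, and \eqref{est_u_H_k} follows.

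Uniqueness at the net-level is immediate: for each fixed $\eps$, Theorem \ref{theo_main} already provides uniqueness of $u_\eps$ in $C^2([0,T],H^{k+1}(\R^n))$ via the energy estimate applied to the difference of two solutions (whose initial data and right-hand side vanish). The only mildly delicate point, and what I would expect to be the main technical obstacle, is bookkeeping: making sure that the polynomial dependence of $C_{k,\eps}$ on $\omega(\eps)^{-1}$ (via $M_\eps$ and the derivative bounds) enters the constant only through the exponential in Gr\"onwall's lemma \ref{lem_ST_G} in a way that still yields a \emph{moderate} net $C_{k,\eps}$, i.e., one bounded by a negative power of $\eps$. Since $\omega(\eps)\geq c_2\eps^r$ by Remark \ref{rem_net} and the exponent of $\omega(\eps)^{-1}$ entering $C_{k,\eps}$ is finite for each fixed $k$, this moderateness property holds, and the net of solutions $(u_\eps)_\eps$ therefore qualifies as a very weak solution in the sense of Definition \ref{def_vws}.
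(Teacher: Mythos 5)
Your main argument is exactly the paper's: Proposition \ref{prop_net_vww} is obtained by applying Theorem \ref{theo_main} to the regularised problem \eqref{CP_eps_last} for each fixed $\eps$, using Proposition \ref{prop_mod_Cauchy} to verify the hypotheses and the $\eps$-dependent Glaeser inequality \eqref{Glaeser_eps} to track the constant. That part, together with the uniqueness argument, is correct and is all the proposition itself asserts.

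Your final paragraph, however, contains a genuine error. You claim that because $\omega(\eps)\ge c_2\eps^r$ and the exponent of $\omega(\eps)^{-1}$ entering $C_{k,\eps}$ is finite, the net $C_{k,\eps}$ is moderate, i.e.\ bounded by a negative power of $\eps$. This is false for a general positive scale: the Glaeser constant $M_\eps=c\,\omega(\eps)^{-N-2}$ and the $L^\infty$-norms of $\partial^\alpha a_{i,\eps}$ enter the Gr\"onwall constant \emph{in the exponent}, so $C_{k,\eps}={\rm e}^{c_k\omega(\eps)^{-N_k}}$. With $\omega(\eps)\sim\eps^r$ this is ${\rm e}^{c\eps^{-rN_k}}$, which grows faster than any negative power of $\eps$ and is therefore \emph{not} moderate. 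This is precisely why the paper, immediately after the proposition, restricts the scale to $\omega^{-1}(\eps)=\ln(\ln(\eps^{-1}))$, so that $\omega(\eps)^{-N_k}\le c'_{N_k}\ln(\eps^{-1})$ and hence $C_{k,\eps}\le\eps^{-c_kc'_{N_k}}$. Since the proposition's statement only asserts the existence of the net $C_{k,\eps}$ with the indicated dependence (not its moderateness), your proof of the proposition proper stands; but the concluding claim that $(u_\eps)_\eps$ qualifies as a very weak solution for an arbitrary positive scale does not, and needs the logarithmic choice of $\omega$.
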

Note that the net $C_{k,\eps}$ depends on $\omega(\eps)^{-N-2}$ in exponential form, i.e., ${\rm e}^{c\omega(\eps)^{-N-2}}$ for some constant $c>0$ (see for instance \eqref{est_U_n+1}). The same is true for the $L^\infty$-norms of the derivatives of the coefficients $a_{i,\eps}$ up to order $k+1$ (see \eqref{est_E_W}). In other words, we have that there exists $c_k>0$ and $N_k\in\N$ such that 
\[
C_{k,\eps}={\rm e}^{c_k\omega(\eps)^{-N_k}}.
\]
It follows that we need to choose $\omega(\eps)$ of logarithmic type to be sure that $(u_\eps)_\eps$ is moderate given moderate initial data and right-hand side. More precisely, let us choose $\omega(\eps)$ with 
\[
\omega^{-1}(\eps)=\ln(\ln(\eps^{-1})),
\]
for $\eps\in(0,1/2]$ and $\omega(\eps)=1$ for $\eps\in(1/2,1)$. By direct computations we have that for every $r>0$ there exists $c_r>0$ such that 
\[
\ln(\ln(\eps^{-1}))\le c_r\ln^r(\eps^{-1}).
\]
Hence,
\[
\omega(\eps)^{-N_k}\le (c_{N_k})^{N_k}(\ln^{\frac{1}{N_k}}(\eps^{-1}))^{N_k}=c'_{N_k}\ln(\eps^{-1}) 
\]
and
\[
C_{k,\eps}={\rm e}^{c_k\omega(\eps)^{-N_k}}\le{\rm e}^{c_kc'_{N_k}\ln(\eps^{-1}) }=\eps^{-c_kc'_{N_k}}.
\]
So, from \eqref{est_u_H_k} and Sobolev embedding inequalities, choosing $\omega(\eps)$ as above we have that if the initial data and the right-hand side are moderate then the solution net $(u_\eps)_\eps$ is moderate. Analogously, the same will be true replacing moderate with negligible. Note that one could choose different mollifiers for regularising coefficients and initial data. The argument above will still work provided that the mollifier we use in regularising $a$ is positive.  We can therefore state the following result of very weak well-posedness. 

\begin{theorem}
\label{theo_vw_well-posed}
The Cauchy problem
\[
\begin{split}
\partial_t^2u-\sum_{i=1}^na_i(x)\partial^2_{x_i} u&=f(t,x),\quad t\in[0,T], x\in\R^n,\\
u(0,x)&=g_0,\\
\partial_tu(0,x)&=g_1,
\end{split}
\]
where  $a_i\in\E'(\R^n)$ with $a_i\ge 0$ for all $i=1,\dots ,n$, $f\in C^\infty([0,T],\E'(\R^n))$ and $g_0,g_1\in\E'(\R^n)$, is very weakly well-posed, i.e., a very weak solution exists and it is unique modulo negligible nets (negligible changes in the regularisation of the equation coefficients and initial data lead to negligible changes in the corresponding very weak solution).
\end{theorem}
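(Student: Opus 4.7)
My approach follows the two-part strategy required by Definitions \ref{def_vws} and \ref{vw_well_posed}: first I construct a moderate solution net using Proposition \ref{prop_net_vww} and the log-logarithmic scale, then I show that the construction is stable under negligible perturbations of the regularised data via the same energy estimate applied to the difference.

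For \textbf{existence}, the plan is to fix a non-negative mollifier $\varphi\ge 0$ and take $\omega(\eps)$ of double-logarithmic type, $\omega(\eps)^{-1}=\ln(\ln(\eps^{-1}))$ on $(0,1/2]$ and $\omega(\eps)=1$ otherwise, as in the discussion preceding the theorem. Proposition \ref{prop_mod_Cauchy} ensures that $(a_{i,\eps})_\eps, (f_\eps)_\eps, (g_{0,\eps})_\eps, (g_{1,\eps})_\eps$ are moderate with $a_{i,\eps}\ge 0$, so Theorem \ref{theo_main} yields a unique smooth solution $(u_\eps)_\eps$ of \eqref{CP_eps_last}. Proposition \ref{prop_net_vww} gives
\[
\Vert u_\eps(t)\Vert_{H^{k+1}}^2\le C_{k,\eps}\Big(\Vert g_{0,\eps}\Vert_{H^{k+2}}^2+\Vert g_{1,\eps}\Vert_{H^{k+1}}^2+\int_0^t \Vert f_\eps(s)\Vert_{H^{k+1}}^2\, ds\Big),
\]
with $C_{k,\eps}={\rm e}^{c_k\omega(\eps)^{-N_k}}\le \eps^{-c'_k}$ by the chosen scale. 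Combined with the moderateness of the data, this gives polynomial-in-$\eps^{-1}$ bounds on $\Vert u_\eps(t)\Vert_{H^{k+1}}$ uniformly in $t\in[0,T]$, which via Sobolev embedding $H^s\hookrightarrow C^\ell$ for $s>\ell+n/2$ upgrade to $C^\infty$-moderateness in the sense of Definition \ref{def_mod_neg_u}.

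For \textbf{uniqueness modulo negligible nets}, let $(\tilde a_{i,\eps})_\eps,(\tilde f_\eps)_\eps,(\tilde g_{0,\eps})_\eps,(\tilde g_{1,\eps})_\eps$ be a second admissible regularisation of the same data, differing from the first by negligible nets, and let $(\tilde u_\eps)_\eps$ be the corresponding moderate solution furnished by the existence part applied to the tilded family. Setting $w_\eps=u_\eps-\tilde u_\eps$ and subtracting the two regularised equations produces
\[
\partial_t^2 w_\eps-\sum_{i=1}^n a_{i,\eps}(x)\partial_{x_i}^2 w_\eps = (f_\eps-\tilde f_\eps)+\sum_{i=1}^n (a_{i,\eps}-\tilde a_{i,\eps})\partial_{x_i}^2 \tilde u_\eps =: R_\eps,
\]
with initial data $w_\eps(0)=g_{0,\eps}-\tilde g_{0,\eps}$ and $\partial_t w_\eps(0)=g_{1,\eps}-\tilde g_{1,\eps}$, both negligible. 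Since the left-hand side is governed by the \emph{same} non-negative moderate coefficients $(a_{i,\eps})_\eps$, Proposition \ref{prop_net_vww} applies verbatim to this linear problem in $w_\eps$ with source $R_\eps$, giving the same estimate with the same constant $C_{k,\eps}\le\eps^{-c'_k}$. The key observation is then that $R_\eps$ is $H^{k+1}$-negligible: $f_\eps-\tilde f_\eps$ is negligible by assumption, while each summand $(a_{i,\eps}-\tilde a_{i,\eps})\partial_{x_i}^2\tilde u_\eps$ is a product of a negligible net with a moderate one (moderateness of $\partial_{x_i}^2\tilde u_\eps$ in every Sobolev order follows from the existence part applied to the tilded data), and negligible nets form an ideal inside the moderate ones. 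Thus the polynomial blow-up of $C_{k,\eps}$ is absorbed by the super-polynomial decay of the negligible nets in $R_\eps$ and in the initial data, yielding $\eps^q$ bounds for every $q$ on $\Vert w_\eps(t)\Vert_{H^{k+1}}$, hence $C^\infty$-negligibility of $(w_\eps)_\eps$ via Sobolev embedding, which is exactly Definition \ref{vw_well_posed}.

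The \textbf{main obstacle} I anticipate is the uniform-in-regularisation control of the constants $C_{k,\eps}$: one must check that $c_k,N_k$ depend only on the moderateness indices of the coefficients (and on $T$, $n$, $k$), not on the particular regularising net, so that the same double-logarithmic scale $\omega(\eps)$ is simultaneously compatible with both the original and the perturbed regularisations. This requires tracing through the energy and Gr\"onwall arguments of Section \ref{sec_CP_smooth} and observing that the only coefficient-dependence enters via $\Vert a_{i,\eps}\Vert_\infty$ and via the $\eps$-dependent Glaeser constant \eqref{M_eps}, both of which are bounded by quantities of the form $c\,\omega(\eps)^{-N}$ with $c,N$ determined purely by the moderateness of $a_i$ as a compactly supported distribution — and hence shared by any other moderate regularisation differing from the first by a negligible net.
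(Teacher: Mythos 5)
Your proposal is correct and follows essentially the same route as the paper: existence via the moderate regularisations, the estimate of Proposition \ref{prop_net_vww} and the double-logarithmic scale taming $C_{k,\eps}$ to a power of $\eps^{-1}$, and uniqueness by subtracting the two regularised problems and applying the same energy estimate to the difference, whose source term $\sum_i(a_{i,\eps}-\tilde a_{i,\eps})\partial_{x_i}^2\tilde u_\eps+(f_\eps-\tilde f_\eps)$ is negligible as a product of negligible and moderate nets. Your closing remark on tracing the dependence of $C_{k,\eps}$ only on the moderateness indices of the coefficients is a sensible explicit check of a point the paper leaves implicit.
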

\begin{proof}
The existence proof is given above. Concerning uniqueness (in the very weak sense), let $(u_\eps)_\eps$ be the solution constructed above and let $(u'_\eps)_\eps$ be the solution of the Cauchy problem where coefficients and initial data have been perturbed by negligible nets, i.e., 
\[
\begin{split}
\partial_t^2u-\sum_{i=1}^na'_{i,\eps}(x)\partial^2_{x_i} u&=f'_\eps(t,x),\quad t\in[0,T], x\in\R^n,\\
u(0,x)&=g'_{0,\eps}\\
\partial_tu(0,x)&=g'_{1,\eps},
\end{split}
\]
where all the nets $(a_{i,\eps}-a'_{i,\eps})_\eps$, $(f_\eps-f'_\eps)_\eps$, $(g_{0,\eps}-g'_{0,\eps})_\eps$ and $(g_{1,\eps}-g'_{1,\eps})_\eps$ are negligible (in the appropriate function spaces). Hence, for $v_\eps=u_\eps-u'_\eps$ we have 
\[
\begin{split}
\partial_t^2v_\eps-\sum_{i=1}^na_{i,\eps}(x)\partial^2_{x_i} v_\eps&=\sum_{i=1}^n(a_{i,\eps}-a'_{i,\eps})(x)\partial_{x_i}^2u'_\eps+( f_\eps-f'_\eps)(t,x),\\
v_\eps(0,x)&=g_{0,\eps}-g'_{0,\eps}\\
\partial_tv_\eps(0,x)&=g_{1,\eps}-g'_{1,\eps},
\end{split}
\]
and therefore from \eqref{est_u_H_k}
\[
\Vert v_\eps(t)\Vert_{H^{k+1}}^2\le C_{k,\eps}\biggl(\Vert g_{0,\eps}-g'_{0,\eps}\Vert_{H^{k+2}}^2+\Vert g_{1,\eps}-g_{1,\eps}\Vert_{H^{k+1}}^2+\int_0^t \Vert F_\eps(s)\Vert_{H^{k+1}}^2\, ds\biggl),
\]
with
\[
F_\eps(s,x)=\sum_{i=1}^n(a_{i,\eps}-a'_{i,\eps})(x)\partial_{x_i}^2u'_\eps(s,x)+( f_\eps-f'_\eps)(s,x).
\]
Since $(h_\eps)_\eps$ is negligible as well as $(g_{0,\eps}-g'_{0,\eps})_\eps$ and $(g_{1,\eps}-g_{1,\eps})_\eps$ we conclude that $(v_\eps)_\eps$ is negligible as desired.

\end{proof}



\section{Consistency and applications}
\label{sec_CP_appl}

We conclude the paper by proving a consistency result and by discussing some explanatory examples. We begin by observing that, by arguing on real and imaginary part, we can easily remove the assumption, stated at the beginning of the paper, that the right-hand side $f$ and the initial data $g_0$ and $g_1$ are real valued, and we can therefore state Theorem \ref{theo_main} and Theorem \ref{theo_vw_well-posed} for complex-valued functions. This also means that when regularising we can allow complex valued mollifiers. 

\subsection{Consistency with the classical theory}

Our first task is to show that when the Cauchy problem is $C^\infty$ well-posed, i.e., a classical solution $u\in C^\infty([0,T]\times\R^n)$ exists given $g_0,g_1\in C^\infty_c(\R^n)$ and $f\in C^\infty([0,T]\times\R^n)$ with compact support with respect to $x$, then every very weak solution $u_\eps$ converges to the classical solution $u$. To see this we need to be more specific in the choice of the mollifiers. In particular we will distinguish between mollifiers (i.e. $\varphi\in C^\infty_c(\R^n)$ with $\int\varphi(x)\, dx=1$) and mollifiers with all the moments vanishing (i.e. $\psi\in \S(\R^n)$ with $\int\psi(x)\, dx=1$ and $\int x^\alpha \psi(x)\, dx=0$ for all $\alpha\in\N^n$). A mollifier with all the moments vanishing it is easily obtained as the inverse Fourier transform of a compactly supported function identically equal to $1$ around $0$. So, it does not have compact support and it is complex valued.
\begin{theorem}
\label{theo_consistency}
Let
\[
\begin{split}
\partial_t^2u-\sum_{i=1}^na_i(x)\partial^2_{x_i} u&=f(t,x),\quad t\in[0,T], x\in\R^n,\\
u(0,x)&=g_0,\\
\partial_tu(0,x)&=g_1,
\end{split}
\]
where $a\in B^\infty([0,T]\times\R^n)$, $a\ge 0$, $f\in C^\infty([0,T]\times\R^n)$ with compact support with respect to $x$ and $g_0,g_1\in C^\infty_c(\R^n)$. Hence,
\begin{itemize}
\item[(i)] the Cauchy problem above has a unique solution $u\in C^\infty([0,T]\times\R^n)$;
\item[(ii)] for every positive scale $\omega(\eps)$, for every mollifier $\varphi\ge 0$ and every mollifier $\psi$ with all the moments vanishing the Cauchy problem has a very weak solution, i.e., there exists a moderate net  $(u_\eps)_\eps$ such that 
\[
\begin{split}
\partial_t^2u_\eps-\sum_{i=1}^na_{i,\eps}(x)\partial^2_{x_i} u_\eps&=f_\eps(t,x),\quad t\in[0,T], x\in\R^n,\\
u_\eps(0,x)&=g_{0,\eps}\\
\partial_tu_\eps(0,x)&=g_{1,\eps},
\end{split}
\]
where  
\[
\begin{split}
a_{i,\eps}(x)&=(a_i\ast\varphi_{\omega(\eps)})(x),\\
f_\eps(t,x)&=(f(t,\cdot)\ast\psi_{\eps})(x),\\
g_{0,\eps}(x)&=(g_0\ast\psi_{\eps})(x),\\
g_{1,\eps}(x)&=(g_1\ast\psi_{\eps})(x).
\end{split}
\]
Furthermore, $(u_\eps)_\eps$ is bounded with respect to $\eps$ in all Sobolev norms, i.e., for all $k\in\N$ there exists a constant $c>0$ such that 
\[
\Vert u_\eps(t)\Vert_{H^{k+1}}\le c
\]
for all $t\in[0,T]$ and $\eps\in(0,1]$;
\item[(iii)] the net $u_\eps(t,\cdot)$ converges to $u(t,\cdot)$ in every Sobolev space uniformly with respect to $t\in[0,T]$.
\end{itemize}

\end{theorem}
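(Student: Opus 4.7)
Claim (i) is already done: apply Corollary \ref{cor_C_infty} with the smooth bounded coefficients to obtain the unique classical solution $u\in C^\infty([0,T]\times\R^n)$. Parts (ii) and (iii) both boil down to applying the Sobolev energy estimate \eqref{est_u_H_k_1} of Theorem \ref{theo_main} to the regularised problem, first to $u_\eps$ itself (for uniform bounds) and then to the difference $v_\eps := u_\eps - u$ (for convergence). The crucial preliminary step is to establish that the constants arising from \eqref{est_u_H_k_1} are bounded \emph{uniformly in $\eps$}: since $\varphi\ge 0$ and $\int\varphi=1$, Young's convolution inequality gives $\Vert\partial^\alpha a_{i,\eps}\Vert_{L^\infty}\le\Vert\partial^\alpha a_i\Vert_{L^\infty}$ for every $\alpha$, and likewise the Glaeser constant $M_\eps$ for $a_{i,\eps}$ is bounded by the one for $a_i$. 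Hence the constants $C_{k,\eps}$ produced by \eqref{est_u_H_k_1} depend only on $T$ and on the $B^\infty$-data of the original $a_i$, and are uniformly bounded in $\eps$.

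For (ii), I would plug this uniform constant into \eqref{est_u_H_k_1} together with the trivial uniform Sobolev bounds $\Vert g_{j,\eps}\Vert_{H^k}\le C\Vert g_j\Vert_{H^k}$ and $\sup_t\Vert f_\eps(t)\Vert_{H^k} \le C\sup_t\Vert f(t)\Vert_{H^k}$ (valid because $g_j$ and $f(t,\cdot)$ are compactly supported smooth and $\psi\in\S$); the desired uniform Sobolev bound for $u_\eps$ follows.

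For (iii), subtracting the two equations gives
\[
\partial_t^2 v_\eps - \sum_{i=1}^n a_{i,\eps}\,\partial_{x_i}^2 v_\eps = (f_\eps - f) + \sum_{i=1}^n (a_{i,\eps} - a_i)\,\partial_{x_i}^2 u,
\]
with initial data $v_\eps(0)=g_{0,\eps}-g_0$ and $\partial_t v_\eps(0)=g_{1,\eps}-g_1$. I would then apply \eqref{est_u_H_k_1} with the uniform constant from (ii) and show that each error term tends to $0$ in the relevant Sobolev norm. The data errors and the forcing error are negligible by Proposition \ref{prop_nets_asymp}(iv) (applied pointwise in $t$, using that $\psi$ has all moments vanishing). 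For the coefficient error, the $B^\infty$ hypothesis forces each $\partial^\alpha a_i$ to be Lipschitz, hence uniformly continuous, so $\partial^\alpha a_{i,\eps}\to\partial^\alpha a_i$ in $L^\infty(\R^n)$ at rate $O(\omega(\eps))$ for every $\alpha$; combined with the uniform-in-$t$ compact support of $u(t,\cdot)$ from finite speed of propagation, a Leibniz expansion yields $\Vert (a_{i,\eps}-a_i)\partial_{x_i}^2 u\Vert_{C([0,T],H^{k+1})}\to 0$ for every $k$. Plugging back into the energy estimate produces $\Vert v_\eps(t)\Vert_{H^{k+1}}\to 0$ uniformly on $[0,T]$.

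The main technical obstacle is the $\eps$-uniformity of $C_{k,\eps}$ in (ii); nothing deeper is happening, but this is precisely where the assumption $\varphi\ge 0$ is used — without positivity one would only get moderate (rather than uniform) bounds on $\Vert\partial^\alpha a_{i,\eps}\Vert_{L^\infty}$, and the exponential dependence of $C_{k,\eps}$ on these quantities would blow up. Once uniformity is secured, (iii) is a textbook perturbation/energy argument, and the asymmetric choice of mollifiers ($\varphi\ge 0$ for $a_i$, $\psi$ with vanishing moments for $f,g_j$) only matters to the extent that convergence of the coefficient-regularisation is qualitative (rate $\omega(\eps)$) while convergence of the data-regularisation is negligible — both rates are amply sufficient to close the estimate.
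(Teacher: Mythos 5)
Your proposal is correct and follows essentially the same route as the paper: (i) via the classical well-posedness result, (ii) via the energy estimate \eqref{est_u_H_k_1} with constants made uniform in $\eps$ through Young's inequality and the $\eps$-independent Glaeser constant, and (iii) via the difference equation for $u_\eps-u$ and the same energy estimate applied to the error terms (your observation that the coefficient error only converges at rate $O(\omega(\eps))$ rather than negligibly is in fact more accurate than the paper's own wording, and it suffices for the claimed convergence). One small correction to your closing remark: Young's inequality gives $\Vert\partial^\alpha a_{i,\eps}\Vert_{L^\infty}\le\Vert\varphi\Vert_{L^1}\Vert\partial^\alpha a_i\Vert_{L^\infty}$ uniformly in $\eps$ for \emph{any} mollifier, so the positivity $\varphi\ge 0$ is not what secures uniformity of the $L^\infty$ bounds — it is what guarantees $a_{i,\eps}\ge 0$, without which the symmetriser and Glaeser's inequality could not be applied to the regularised problem at all.
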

\begin{proof}
\leavevmode
\begin{itemize}
\item[(i)] The $C^\infty$ well-posedness comes directly from Oleink's result.
\item[(ii)] Repeating the arguments of the previous section and, in particular, referring to Proposition \ref{prop_net_vww} we easily see that there exists a solution net $(u_\eps)_\eps$ such that
\[
\Vert u_\eps(t)\Vert_{H^{k+1}}^2\le C_{k,\eps}\biggl(\Vert g_{0,\eps}\Vert_{H^{k+2}}^2+\Vert g_{1,\eps}\Vert_{H^{k+1}}^2+\int_0^t \Vert f_\eps(s)\Vert_{H^{k+1}}^2\, ds\biggl),
\]
for all $k\in\N$. Since the coefficients $a_i$ belong to $B^\infty([0,T]\times\R^n)$ the estimate \eqref{Glaeser_eps} holds with a constant independent of $\eps$ and the $L^\infty$-norms of the derivatives of $a_{j,\eps}$ are as well bounded with respect to $\eps$. It follows that we can replace $C_{k,\eps}$ with a constant $C_k$, i.e., 
\[
\Vert u_\eps(t)\Vert_{H^{k+1}}^2\le C_{k}\biggl(\Vert g_{0,\eps}\Vert_{H^{k+2}}^2+\Vert g_{1,\eps}\Vert_{H^{k+1}}^2+\int_0^t \Vert f_\eps(s)\Vert_{H^{k+1}}^2\, ds\biggl),
\]
for all $k\in\N$ and $\eps\in(0,1]$. Note that from Proposition \ref{prop_reg_nets} the nets $g_{0,\eps}$ and $g_{1,\eps}$ are moderate and for all $k\in\N$ there exists a constant $c_k>0$ such that 
\[
\Vert g_{0,\eps}\Vert_{H^{k+2}}^2+\Vert g_{1,\eps}\Vert_{H^{k+1}}^2\le c_k
\]
for all $\eps\in(0,1]$. Analogously, the same holds for $\Vert f_\eps(s)\Vert_{H^{k+1}}^2$, uniformly in $t\in[0,T]$. By Sobolev's embedding theorem and iterated time-differentiation in the original equation, it follows that $u_\eps$ is moderate as a net of functions in $C^\infty([0,T]\times\R^n)$ and for all $k\in\N$ there exists a constant $c>0$ such that 
\[
\Vert u_\eps(t)\Vert_{H^{k+1}}\le c
\]
for all $t\in[0,T]$ and $\eps\in(0,1]$.
\item[(iii)] We now want to compare the solution net $(u_\eps)_\eps$ with the classical solution $u$. This means to compare the regularised Cauchy problem with solution $u_\eps$ with the classical one with solution $u$. We have that 
\[
\begin{split}
\partial_t^2(u_\eps-u)-\sum_{i=1}^na_{i,\eps}(x)\partial^2_{x_i} (u_\eps-u)&=\sum_{i=1}^n (a_{i,\eps}-a_i)(x)\partial_{x_i}^2u(t,x)+(f_\eps-f)(t,x),\\
(u_\eps-u)(0,x)&=g_{0,\eps}-g_0\\
(\partial_tu_\eps-\partial_t u)(0,x)&=g_{1,\eps}-g_1.
\end{split}
\]
By the estimate \eqref{est_u_H_k} and the arguments in (ii) we know that for all $k\in\N$ there exists $C_k>0$ such that
\begin{multline}
\label{est_H_k_cons}
\Vert (u_\eps-u)(t)\Vert_{H^{k+1}}^2\le C_{k}\biggl(\Vert g_{0,\eps}-g_0\Vert_{H^{k+2}}^2+\Vert g_{1,\eps}-g_1\Vert_{H^{k+1}}^2\\
+\int_0^t \Vert F_\eps(s)\Vert_{H^{k+1}}^2\, ds\biggl),
\end{multline}
where 
\[
F_\eps(t,x)=\sum_{i=1}^n (a_{i,\eps}-a_i)(x)\partial_{x_i}^2u(t,x)+(f_\eps-f)(t,x).
\]
Note that $g_0,g_1\in C^\infty_c(\R^n)\subseteq H^\infty(\R^n)$. Hence, by Proposition 2.7 in \cite{BO:92} we have that for all $k\in\N$ and for all $q\in\N$ there exists $c_{k,q}>0$ such that 
\beq
\label{est_neg_g}
\Vert g_{0,\eps}-g_0\Vert_{H^{k+2}}\le c_{k,q}\eps^q,\qquad \Vert g_{1,\eps}-g_1\Vert_{H^{k+1}}\le c_{k,q}\eps^q
\eeq
for all $\eps\in(0,1]$. Note that by the finite speed of propagation typical of hyperbolic equations $u$ is compactly supported with respect to $x$ and belongs to every Sobolev space. In addition, from Proposition \ref{prop_nets_asymp} we have that $(a_{i,\eps}-a_i)(x)\partial_{x_i}^2u(t,x)$ is negligible as a net of smooth functions in $x$ and more precisely, for all $k\in\N$ and for all $q\in\N$ there exists $c'_{k,q}>0$ such that 
\[
\Vert  (a_{i,\eps}-a_i)(\cdot)\partial_{x_i}^2u(t,\cdot)\Vert_{H^{k+1}}\le c'_{k,q}\eps^q
\]
for all $t\in[0,T]$ and $\eps\in(0,1]$. Analogously, the net $(f_\eps-f)_\eps$ is negligible as well and therefore for all $k\in\N$ and for all $q\in\N$ there exists $c'_{k,q}>0$ such that 
\beq
\label{est_neg_F}
\Vert( f_\eps-f)(t,\cdot)\Vert_{H^{k+1}}\le c'_{k,q}\eps^q
\eeq
for all $t\in[0,T]$ and $\eps\in(0,1]$. By inserting \eqref{est_neg_F} and \eqref{est_neg_g} into \eqref{est_H_k_cons} we conclude that for all $k\in\N$ and for all $q\in\N$ there exists a constant $c>0$ such that 
\[
\Vert (u_\eps-u)(t)\Vert_{H^{k+1}}\le c\eps^q,
\]
for all $t\in[0,T]$ and $\eps\in(0,1]$. It follows that $u_\eps(t,\cdot)$ converges to $u(t,\cdot)$ in every Sobolev space, uniformly with respect to time. 

\end{itemize}
\end{proof}

The rest of this section is devoted to some explanatory examples. For the sake of simplicity we work in one-space dimension.  
\subsection{Example 1}
Let us assume that $a$ is a $C^1$ function on $\R$ with discontinuous second order derivative and consider the Cauchy problem
\[
\begin{split}
\partial_t^2u-a(x)\partial^2_x u&=f(t,x),\quad t\in[0,T], x\in\R,\\
u(0,x)&=g_{0}\\
\partial_tu(0,x)&=g_{1}.
\end{split}
\]
For instance, let
\beq
\label{a_ex}
a(x)=\begin{cases}
\frac{1}{2}x^2\chi(x), & x>0,\\
0, & x\le 0,
\end{cases}
\eeq
where $\chi\in C^\infty_c(\R)$, $\chi\ge 0$, identically equal to 1 on a neighbourhood of $0$. It follows that $a\ge 0$ has compact support, belongs to $C^1$ and the second derivative is a jump-function. So, $a$ is a distribution in $\E'(\R)$ fulfilling the assumptions of Theorem \ref{theo_vw_well-posed}. Note that, even if we take right-hand side and initial data smooth we cannot in principle prove $C^\infty$ well-posedness. However, we can have very weak well-posedness with a suitable choice of mollifiers and regularising net. More precisely, if $\varphi$ is a positive mollifier then
\beq
\label{est_example}
\begin{split}
\Vert a\ast\varphi_{\omega(\eps)}\Vert_\infty&\le \Vert a\Vert_\infty \Vert \varphi_{\omega(\eps)}\Vert_1=\Vert a\Vert_\infty,\\
\Vert(a\ast\varphi_{\omega(\eps)})'\Vert_\infty&=\Vert a'\ast\varphi_{\omega(\eps)}\Vert_\infty\le \Vert a'\Vert_{\infty},\\
\Vert (a\ast\varphi_{\omega(\eps)})''\Vert_\infty&\le \Vert a''\Vert_\infty
\end{split}
\eeq
and 
\begin{multline}
\label{est_a_varphi}
|(a\ast \varphi_{\omega(\eps)})(x)-a(x)|=\biggl|\int_{\R}(a(x-\omega(\eps)y)-a(x))\varphi(y)\, dy\biggr|\\
\le c(a')\omega(\eps)\int_\R y\varphi(y)\, dy= c(a',\varphi)\omega(\eps),
\end{multline}
for all $x\in\R$ and $\eps\in(0,1]$. Let $(u_\eps)_\eps$ be the solution net of the Cauchy problem
\[
\begin{split}
\partial_t^2u_\eps-a_\eps(x)\partial^2_x u_\eps&=f_\eps(t,x),\quad t\in[0,T], x\in\R,\\
u_\eps(0,x)&=g_{0,\eps},\\
\partial_tu_\eps(0,x)&=g_{1,\eps},
\end{split}
\]
where $f\in C([0,T]),\E'(\R^n)$ and $g_0,g_1\in\E'(\R^n)$ and
\[
\begin{split}
a_{\eps}(x)&=(a\ast\varphi_{\omega(\eps)})(x),\\
f_\eps(t,x)&=(f(t,\cdot)\ast\varphi_{\eps})(x),\\
g_{0,\eps}(x)&=(g_0\ast\varphi_{\eps})(x),\\
g_{1,\eps}(x)&=(g_1\ast\varphi_{\eps})(x).\\
\end{split}
\]
In particular, from \eqref{est_u_H_k} it follows that
\[
\Vert u_\eps(t)\Vert_{H^{1}}^2\le C_{1,\eps}\biggl(\Vert g_{0,\eps}\Vert_{H^{2}}^2+\Vert g_{1,\eps}\Vert_{H^{1}}^2+\int_0^t \Vert f_\eps(s)\Vert_{H^{1}}^2\, ds\biggl),
\]
where the net $C_{1,\eps}$ depends on the estimates in \eqref{est_example} and therefore is bounded with respect to $\eps$. Hence, there exists $C_1>0$ such that 
\[
\Vert u_\eps(t)\Vert_{H^{1}}^2\le C_{1}\biggl(\Vert g_{0,\eps}\Vert_{H^{2}}^2+\Vert g_{1,\eps}\Vert_{H^{1}}^2+\int_0^t \Vert f_\eps(s)\Vert_{H^{1}}^2\, ds\biggl),
\]
for all $\eps\in(0,1]$. The same is true for the $H^2$-norm, i.e., there exists $C_2>0$ such that 
\[
\Vert u_\eps(t)\Vert_{H^{2}}^2\le C_{2}\biggl(\Vert g_{0,\eps}\Vert_{H^{3}}^2+\Vert g_{1,\eps}\Vert_{H^{2}}^2+\int_0^t \Vert f_\eps(s)\Vert_{H^{2}}^2\, ds\biggl),
\]
for all $\eps\in(0,1]$. Note that, from the estimate \eqref{est_U_n+1} and the fact that $U_{2}=\partial_t u$ we have that there exists a constant $C>0$ such that 
\beq
\label{est_u_ex}
\Vert u_\eps(t)\Vert_{L^2}^2\le C\biggl(\Vert g_{0,\eps}\Vert_{H^1}^2+\Vert g_{1,\eps}\Vert_{L^2}^2+\int_{0}^t\Vert f_\eps(s)\Vert_{L^2}^2\, ds\biggr)
\eeq
for all $t\in[0,T]$ and $\eps\in(0,1]$. Indeed, $C$ depends on $T$,  $\Vert a\ast\varphi_{\omega(\eps)}\Vert_\infty$ and $\Vert (a\ast\varphi_{\omega(\eps)})''\Vert_\infty$ and can therefore be chosen independently on $\eps$.

We now want to understand what happens to the very weak solution $(u_\eps)_\eps$ when we change the mollifier $\varphi$. Let us start by changing just the mollifier employed to regularise the coefficient $a$, i.e, let us compare
\[
\begin{split}
\partial_t^2u_\eps-a_\eps(x)\partial^2_x u_\eps&=f_\eps(t,x),\quad t\in[0,T], x\in\R,\\
u_\eps(0,x)&=g_{0,\eps},\\
\partial_tu_\eps(0,x)&=g_{1,\eps},
\end{split}
\]
with 
\[
\begin{split}
\partial_t^2\wt{u}_\eps-{\wt{a}}_\eps(x)\partial^2_x \wt{u}_\eps&=f_\eps(t,x),\quad t\in[0,T], x\in\R,\\
\wt{u}_\eps(0,x)&=g_{0,\eps},\\
\partial_t\wt{u}_\eps(0,x)&=g_{1,\eps},
\end{split}
\]
where
\[
\begin{split}
a_{\eps}(x)&=(a\ast\varphi_{\omega(\eps)})(x),\\
f_\eps(t,x)&=(f(t,\cdot)\ast\varphi_{\eps})(x),\\
g_{0,\eps}(x)&=(g_0\ast\varphi_{\eps})(x),\\
g_{1,\eps}(x)&=(g_1\ast\varphi_{\eps})(x).\\
\end{split}
\]
and 
\[
\wt{a}_{\eps}(x)=(a\ast\wt{\varphi}_{\omega(\eps)})(x).
\]
It follows that $v_\eps=u_\eps-\wt{u}_\eps$ solves the Cauchy problem
\[
\begin{split}
\partial_t^2 v_\eps-a_\eps(x)\partial^2_x v_\eps&=(a_\eps-\wt{a}_\eps)(x)\partial^2_x\wt{u}_\eps(t,x) \\
v_\eps(0,x)&=0,\\
\partial_tv_\eps(0,x)&=0.
\end{split}
\]
Arguing as above, from \eqref{est_u_ex} we have that 
\[
\Vert v_\eps(t)\Vert_{L^2}^2\le C_{1} \int_0^t \Vert (a_\eps-\wt{a}_\eps)(\cdot)\partial^2_x\wt{u}_\eps(s,\cdot)\Vert_{L^2}^2\, ds,
\]
for all $t\in[0,T]$ and $\eps\in(0,1]$.

We can now observe that, since $a$ as well as the mollifiers are compactly supported from \eqref{est_a_varphi} 
\[
|(a_\eps-\wt{a}_\eps)(x)|\le |a_\eps(x)-a(x)|+|\wt{a}_\eps(x)-a(x)|\le c\omega(\eps) 
\]
where $c=c(a',\varphi,\wt{\varphi})>0$. Hence
\[
\Vert a_\eps-\wt{a}_\eps\Vert_\infty=O(\omega(\eps)),
\]
for all $\eps\in(0,1]$. Let 
\beq
\label{est_lambda}
\Vert \partial^2_x\wt{u}_\eps(s,\cdot)\Vert_{L^2}=O(\lambda(\eps)),
\eeq
for $\eps\in(0,1]$. Note that by \eqref{est_u_ex} if 
\[
\biggl(\Vert g_{0,\eps}\Vert_{H^{3}}^2+\Vert g_{1,\eps}\Vert_{H^{2}}^2+\int_0^t \Vert f_\eps(s)\Vert_{H^{2}}^2\, ds\biggl)=O(\lambda(\eps))
\]
then \eqref{est_lambda} holds. It follows that,  
\[
\Vert u_\eps-\wt{u}_\eps\Vert_{L^2}^2=O(\omega^2(\eps)\lambda(\eps)), 
\]
where $\omega^2(\eps)\lambda(\eps)$ depends on the regularising scale for the coefficients $a$ and the regularisations of initial data and right-hand side. In particular,
\begin{itemize}
\item[(i)] If $\omega^2(\eps)\lambda(\eps)\to 0$ then $u_\eps-\wt{u}_\eps\to 0$ in $L^2$ as $\eps\to 0$;
\item[(ii)] If $\omega^2(\eps)\lambda(\eps)=O(1)$ then the net $u_\eps-\wt{u}_\eps$ has a subsequence which is weakly convergent to $0$.
\end{itemize}
In the next proposition we prove that, with a suitable choice of the regularising scale $\omega(\eps)$, we can guarantee that a change of mollifier will not have effect on the asymptotic properties of the solution $u_\eps$, in the sense that the difference of two solutions corresponding to different mollifiers will converge to $0$ in the $L^2$-norm. 
\begin{proposition}
\label{prop_ex_1}
Let 
\[
\begin{split}
\partial_t^2u-a(x)\partial^2_x u&=f(t,x),\quad t\in[0,T], x\in\R,\\
u(0,x)&=g_{0},\\
\partial_tu(0,x)&=g_{1},
\end{split}
\]
with $a$ as in \eqref{a_ex} and $g_0,g_1\in \E'(\R)$ and $f\in C([0,T],\E'(\R))$. Let $\varphi,\wt{\varphi}, \psi$ be mollifiers and let $u_\eps$ and $\wt{u_\eps}$ be the solutions nets of 
\[
\begin{split}
\partial_t^2u_\eps-a_\eps(x)\partial^2_x u&=f_\eps(t,x),\quad t\in[0,T], x\in\R,\\
u_\eps(0,x)&=g_{0,\eps},\\
\partial_tu_\eps(0,x)&=g_{1,\eps},
\end{split}
\]
and 
\[
\begin{split}
\partial_t^2{\wt u}_\eps-\wt{a}_\eps(x)\partial^2_x \wt{u}_\eps&=f_\eps(t,x),\quad t\in[0,T], x\in\R,\\
\wt{u}_\eps(0,x)&=g_{0,\eps},\\
\partial_t\wt{u}_\eps(0,x)&=g_{1,\eps},
\end{split}
\]
respectively, where 
\[
\begin{split}
a_{\eps}(x)&=(a\ast\varphi_{\omega(\eps)})(x),\\
f_\eps(t,x)&=(f(t,\cdot)\ast\psi_{\eps})(x),\\
g_{0,\eps}(x)&=(g_0\ast\psi_{\eps})(x),\\
g_{1,\eps}(x)&=(g_1\ast\psi_{\eps})(x).\\
\end{split}
\]
and 
\[
\wt{a}_{\eps}(x)=(a\ast\wt{\varphi}_{\omega(\eps)})(x).
\]
Hence, given $g_0$, $g_1$ and $f$ there exists a net $\omega(\eps)\to 0$ such that $\Vert u_\eps(t)-\wt{u}_\eps(t)\Vert_{L^2}\to 0$ uniformly with respect to $t\in[0,T]$.
\end{proposition}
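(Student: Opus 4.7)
The overall strategy is to subtract the two regularised problems, apply the energy estimate \eqref{est_u_ex} to the equation satisfied by $v_\eps:=u_\eps-\wt u_\eps$, and then balance the smallness of $a_\eps-\wt a_\eps$ (controlled by $\omega(\eps)$) against the $\eps$-growth of the Sobolev norm of $\partial_x^2\wt u_\eps$ (controlled by the distributional order of the data). The decisive observation is that for the particular $a$ defined in \eqref{a_ex}, the $L^\infty$-norms of $a_\eps$, $a_\eps'$ and $a_\eps''$ are all bounded independently of $\eps$ by \eqref{est_example}; this keeps the Glaeser constant $M$ and the resulting energy constants uniform in $\eps$, and therefore lets us work with a polynomial, rather than logarithmic, scale $\omega(\eps)$.

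First I would note that, since the initial data and the right-hand side coincide in the two problems, $v_\eps$ solves
\[
\partial_t^2 v_\eps - a_\eps(x)\partial_x^2 v_\eps = (a_\eps-\wt a_\eps)(x)\,\partial_x^2\wt u_\eps(t,x),\quad v_\eps(0,\cdot)=0,\ \partial_t v_\eps(0,\cdot)=0.
\]
Applying \eqref{est_u_ex} (whose constant is $\eps$-independent by the observation above) to $v_\eps$ with vanishing Cauchy data yields
\[
\Vert v_\eps(t)\Vert_{L^2}^2 \le C\int_0^t \Vert a_\eps-\wt a_\eps\Vert_\infty^2\,\Vert \partial_x^2\wt u_\eps(s,\cdot)\Vert_{L^2}^2\, ds,
\]
uniformly in $t\in[0,T]$ and $\eps\in(0,1]$. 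For the first factor, \eqref{est_a_varphi} applied to both $\varphi$ and $\wt\varphi$ gives $\Vert a_\eps-\wt a_\eps\Vert_\infty \le c(a',\varphi,\wt\varphi)\,\omega(\eps)$. For the second, I would invoke \eqref{est_u_H_k} with $k=1$ for $\wt u_\eps$; since only $\Vert \wt a_\eps\Vert_\infty$ and $\Vert \wt a_\eps''\Vert_\infty$ intervene in the corresponding constant, the latter may again be chosen independent of $\eps$, giving
\[
\Vert \wt u_\eps(s)\Vert_{H^2}^2 \le C_2\Bigl(\Vert g_{0,\eps}\Vert_{H^3}^2 + \Vert g_{1,\eps}\Vert_{H^2}^2 + \int_0^s \Vert f_\eps(\tau)\Vert_{H^2}^2\, d\tau\Bigr).
\]

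By Proposition \ref{prop_reg_nets} applied to $g_0,g_1\in\E'(\R)$ and to $f\in C([0,T],\E'(\R))$, there exist $N\in\N$ and $c'>0$, depending on $g_0,g_1,f$ but not on $\eps$, such that the right-hand side above is bounded by $c'\eps^{-2N}$ uniformly in $s\in[0,T]$. Combining,
\[
\Vert v_\eps(t)\Vert_{L^2}^2 \le C''\,\omega(\eps)^2\,\eps^{-2N},
\]
with $C''$ independent of $\eps$ and $t$. It now suffices to pick a positive scale $\omega(\eps)\to 0$ with $\omega(\eps)\,\eps^{-N}\to 0$, for instance $\omega(\eps):=\eps^{N+1}$, to conclude that $\Vert v_\eps(t)\Vert_{L^2}\to 0$ uniformly in $t\in[0,T]$. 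The only subtle step is the verification that the relevant energy constants are genuinely $\eps$-uniform (so that no exponential blow-up is inherited by $C''$); beyond this, the argument is a routine tracking of moderateness exponents and presents no serious obstacle, the exponent $N$ being simply determined a posteriori by the distributional orders of $g_0$, $g_1$ and $f$.
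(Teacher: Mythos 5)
Your argument is correct and follows essentially the same route as the paper: reduce to the Cauchy problem for $v_\eps=u_\eps-\wt u_\eps$ with zero data and source $(a_\eps-\wt a_\eps)\partial_x^2\wt u_\eps$, use the $\eps$-uniformity of the energy constants guaranteed by \eqref{est_example}, bound $\Vert a_\eps-\wt a_\eps\Vert_\infty$ by $\omega(\eps)$ via \eqref{est_a_varphi} and $\Vert\partial_x^2\wt u_\eps\Vert_{L^2}$ polynomially in $\eps^{-1}$ via the moderateness of the regularised data, and then choose $\omega(\eps)$ to be a sufficiently high power of $\eps$. The only cosmetic differences are the normalisation of the exponent $N$ (the paper sets $\omega^2(\eps)=\eps^{N+1}$ rather than $\omega(\eps)=\eps^{N+1}$) and that the paper cites the structure/support theorem for compactly supported distributions where you cite Proposition \ref{prop_reg_nets}; both yield the same polynomial bound.
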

\begin{proof}
By construction we have that 
\[
\Vert a_\eps-\wt{a}_\eps\Vert_\infty=O(\omega(\eps)),
\]
for all $\eps\in(0,1]$. We need to find $\lambda_\eps$ such that  
\[
\biggl(\Vert g_{0,\eps}\Vert_{H^{3}}^2+\Vert g_{1,\eps}\Vert_{H^{2}}^2+\int_0^t \Vert f_\eps(s)\Vert_{H^{2}}^2\, ds\biggl)=O(\lambda(\eps)).
\]
By the support theorem for compactly supported distributions we know that there exists $N$ large enough such that 
\[
\biggl(\Vert g_{0,\eps}\Vert_{H^{3}}^2+\Vert g_{1,\eps}\Vert_{H^{2}}^2+\int_0^t \Vert f_\eps(s)\Vert_{H^{2}}^2\, ds\biggl)=O(\eps^{-N}).
\]
This depends only on $g_0$, $g_1$ and $f$ and not on the mollifier $\psi$. Since
\[
\Vert u_\eps-\wt{u}_\eps\Vert_{L^2}^2=O(\omega^2(\eps)\lambda(\eps)), 
\]
 by choosing $\omega^2(\eps)=\eps^{N+1}$ we have that $u_\eps-\wt{u}_\eps$ tends to $0$ in $L^2(\R)$ uniformly with respect to $t$.  
\end{proof}

\subsection{Example 2}
Let us now study the Cauchy problem 
\[
\begin{split}
\partial_t^2u-H(x)\partial^2_x u&=f(t,x),\quad t\in[0,T], x\in\R,\\
u(0,x)&=g_{0},\\
\partial_tu(0,x)&=g_{1},
\end{split}
\]
where $H$ is the Heaviside function ($H(x)=0$ for $x<0$ and $H(x)=1$ for $x\ge 0$). Clearly $H(x)$ is not a $C^2$ function and its first and second derivatives are highly singular. As in the previous example we want to see what happens in our solution net $(u_\eps)_\eps$ when we change the mollifier employed in regularising the coefficient $H(x)$. In detail, let $g_0,g_1\in \E'(\R)$ and $f\in C([0,T],\E'(\R))$. Let $\varphi,\wt{\varphi}, \psi$ be mollifiers and let $u_\eps$ and $\wt{u_\eps}$ be the solutions nets of 
\[
\begin{split}
\partial_t^2u_\eps-H_\eps(x)\partial^2_x u&=f_\eps(t,x),\quad t\in[0,T], x\in\R,\\
u_\eps(0,x)&=g_{0,\eps},\\
\partial_tu_\eps(0,x)&=g_{1,\eps},
\end{split}
\]
and 
\[
\begin{split}
\partial_t^2{\wt u}_\eps-\wt{H}_\eps(x)\partial^2_x \wt{u}_\eps&=f_\eps(t,x),\quad t\in[0,T], x\in\R,\\
\wt{u}_\eps(0,x)&=g_{0,\eps},\\
\partial_t\wt{u}_\eps(0,x)&=g_{1,\eps},
\end{split}
\]
respectively, where 
\[
\begin{split}
H_{\eps}(x)&=(H\ast\varphi_{\omega(\eps)})(x),\\
f_\eps(t,x)&=(f(t,\cdot)\ast\psi_{\eps})(x),\\
g_{0,\eps}(x)&=(g_0\ast\psi_{\eps})(x),\\
g_{1,\eps}(x)&=(g_1\ast\psi_{\eps})(x).\\
\end{split}
\]
and 
\[
\wt{H}_{\eps}(x)=(H\ast\wt{\varphi}_{\omega(\eps)})(x).
\]
Because of the high singularity of the derivatives of the coefficient $H$ we cannot expect that the net $u_\eps-\wt{u}_\eps$ will converge to $0$ in some sense for arbitrary choice of mollifiers $\varphi$ and $\wt{\varphi}$  but we can at least provide a precise analysis of how the solution net depends on the mollifiers and the net $\omega(\eps)$. As observed at the beginning of the paper 
\[
\Vert u_\eps(t)\Vert_{L^2}^2\le C_{2,\eps}\biggl(\Vert g_{0,\eps}\Vert_{H^1}^2+\Vert g_{1,\eps}\Vert_{L^2}^2+\int_{0}^t\Vert f_\eps(s)\Vert_{L^2}^2\, ds\biggr),
\]
where 
\[
C_{2,\eps}={\rm e}^{\max(2\Vert H_\eps''\Vert_\infty,2)T}\max(\Vert H_\eps\Vert_{\infty},1).
\]
Now
\[
\Vert H_\eps''\Vert=\Vert \delta\ast (\varphi_{\omega(\eps)})'\Vert_\infty=\omega(\eps)^{-2}\Vert \varphi'\Vert_{\infty}
\]
and
\[
\Vert H_\eps\Vert_{\infty}\le 1
\]
for $\varphi\ge 0$ with $\int_\R\varphi\, dx=1$. It follows that 
\[
C_{2,\eps}={\rm e}^{2T\omega(\eps)^{-2}\Vert \varphi'\Vert_{\infty}}
\]
and to get moderate estimates we need to set
\[
\omega(\eps)^{-2}=\ln(\eps^{-1}).
\]
for $\eps$ small enough. Hence,
\[
\Vert u_\eps(t)\Vert_{L^2}^2\le \eps^{-2T\Vert \varphi'\Vert_{\infty}}\biggl(\Vert g_{0,\eps}\Vert_{H^1}^2+\Vert g_{1,\eps}\Vert_{L^2}^2+\int_{0}^t\Vert f_\eps(s)\Vert_{L^2}^2\, ds\biggr).
\]
This proves that the solution net $(u_\eps)_\eps$ depends on the $L^\infty$-norm of the first derivative of the mollifier $\varphi$. Therefore , if we choose $\varphi$ and $\wt{\varphi}$ with $\Vert \varphi'\Vert_{\infty}=\Vert \wt{\varphi}'\Vert_{\infty}$ and $\omega(\eps)$ as above the corresponding solution nets will have the same behaviour in $\epsilon$.


\begin{thebibliography}{CDGS79}

\bibitem[ART19]{ART:19}
 {A.  Altybay, M. Ruzhansky and N. Tokmagambetov,}
 Wave equation with distributional propagation speed and mass term: numerical simulations. 
 \newblock {\em Appl. Math. E-Notes}, 19, 552--562, 2019.
 

\bibitem[AdHSU08]{AdHSU:08}
{F. Andersoon, M. de Hoop, H. Smith and G. Uhlmann.}
\newblock A multi-scale approach to hyperbolic evolution equations with limited smoothness.
\newblock {\em Comm. Partial Differential Equations}, 33(4-6):988--1017, 2018.

\bibitem[BO92]{BO:92}
H.~Biagioni and M.~Oberguggenberger.
\newblock Generalized solutions to the {K}orteweg - de {V}ries and the
  regularized long-wave equations.
\newblock {\em SIAM J. Math Anal.}, 23(4):923--940, 1992.

 \bibitem[B]{B}
{\sc M.~D.~Bronshtein.}
The Cauchy problem for hyperbolic operators with characteristics of variable multiplicity.
(Russian)
\newblock {\em Trudy Moskov. Mat. Obshch.}, 41:83--99, 1980; Trans. Moscow Math.
Soc. 1:87--103, 1982.

\bibitem[BdHSU11]{BdHSU:11}
V. Brytik, M. de Hoop, H. Smith and G. Uhlmann.
\newblock Decoupling of modes for the elastic wave equation in media of limited smoothness.
\newblock {\em Comm. Partial Differential Equations}, 36(10):1683--1693, 2011.

\bibitem[CdHKU19]{CdHKU:19}
{P. Caday, M. de Hoop, V. Katsnelson and G. Uhlmann.}
\newblock Scattering control for the wave equation with unknown wave speed.
\newblock {\em Arch. Ration. Mech. Anal.}, 231(1):409--464, 2019.

\bibitem[CK]{ColKi:02}
{F.~Colombini and T.~Kinoshita.}
\newblock On the Gevrey well posedness of the Cauchy problem for weakly hyperbolic equations of higher order.
\newblock {\em J. Diff.  Eq.}, 186:394--419, 2002.

\bibitem[CK02]{ColKi:02-2}
{F.~Colombini and T.~Kinoshita.}
\newblock
On the Gevrey wellposedness of the Cauchy problem for weakly hyperbolic equations of 4th order.
\newblock {\em Hokkaido Math. J.}, 31:39--60, 2002.

\bibitem[CS]{CS}
{F. Colombini and S. Spagnolo}.
An example of a weakly hyperbolic Cauchy problem not well posed in $C^\infty$.
{\em Acta Math.}, 148:243--253, 1982.

\bibitem[CDS]{CDS}
{F. Colombini, E. De Giorgi and S. Spagnolo}.
Sur les \'equations hyperboliques avec des coefficients qui ne d\'ependent que du temps.
{\em Ann. Scuola Norm. Sup. Pisa Cl. Sci.}, 6:511--559, 1979.

\bibitem[CJS2]{CJS2}
{F. Colombini, E. Jannelli and S. Spagnolo}.
Nonuniqueness in hyperbolic Cauchy problems.
{\em Ann. of Math.}, {126}:495--524, 1987.

\bibitem[dAKi05]{dAKi:05}
{P. d'Ancona and T. Kinoshita}.
On the wellposedness of the Cauchy problem for weakly hyperbolic equations of higher
order.
{\em Math. Nachr.}, 278:1147--1162, 2005.

\bibitem[dAS98]{dASpa:98}
{P. d'Ancona and S. Spagnolo}.
Quasi-Symmetrization of Hyperbolic Systems
and Propagation of the Analytic Regularity.
{\em Bollettino U.M.I.}, 8(1):169--185, 1998.

\bibitem[dHHO08]{dHHO:08}
M. de Hoop, G. H\"ormann and M. Oberguggenberger.
\newblock Evolution systems for paraxial wave equations of Schr\"odinger-type with non-smooth coefficients.
\newblock {\em J. Differential Equations}, 245(6):1413--1432, 2008.

\bibitem[dHHSU12]{dHHSU:12}
M. de Hoop, S. Holman, H. Smith and G. Uhlmann.
\newblock Regularity and multi-scale discretization of the solution construction of hyperbolic evolution equations with limited smoothness.
\newblock {\em Appl. Comput. Harmon. Anal.}, 33(3):330--353, 2012. 

\bibitem[D]{Dencker} 
{N. Dencker}, 
On the propagation of singularities for pseudo-differential operators with characteristics of 
variable multiplicity. 
Comm. Partial Differential Equations, {17}:1709--1736, 1992.
 
 
\bibitem[GJ17]{GarJ}
{C. Garetto and C. J\"ah}
\newblock Well-posedness of hyperbolic systems with multiplicities and smooth coefficients
\newblock{\em Math. Ann}, 369(1-2):441--485, 2017.
 
\bibitem[GR12]{GR:11}
C.~Garetto and M.~Ruzhansky.
\newblock On the well-posedness of weakly hyperbolic equations with
  time-dependent coefficients.
\newblock {\em J. Differential Equations}, 253(5):1317--1340, 2012.

\bibitem[GR13]{GR:12}
C.~Garetto and M.~Ruzhansky.
\newblock Weakly hyperbolic equations with non-analytic coefficients and lower
  order terms.
\newblock {\em Math. Ann.}, 357(2):401--440, 2013.

\bibitem[GR14]{GR:14}
C.~Garetto and M.~Ruzhansky.
\newblock Hyperbolic  second order equations with non-regular time dependent coefficients
\newblock {\em Arch. Ration. Mech. Anal.}, 217(1):113--154, 2015. 

\bibitem[GR14b]{GarRuz:3}
{C.~Garetto and M.~Ruzhansky.}
\newblock A note on weakly hyperbolic equations with analytic principal part
\newblock {\em J. Math. Anal. Appl.},  412(1):1--14, 2014.

 \bibitem[GR17]{GarRuz:7}
{C.~Garetto and M.~Ruzhansky.}
\newblock $C^\infty$ well-posedness of hyperbolic systems with multiplicities
\newblock {\em Ann. Mat. Pura  Appl}, 196(5):1819--1834, 2017.

\bibitem[GJR18]{GJR1}
{C. Garetto, C. J\"ah and M. Ruzhansky.}
\newblock Hyperbolic systems with non-diagonalisable principal part and
  variable multiplicities, {I}: well-posedness. 
\newblock {\em Math. Annalen}, 372(3-4):1597-1629, 2018. 


\bibitem[GJR20]{GJR2}
{C. Garetto, C. J\"ah and M. Ruzhansky.}
Hyperbolic systems with non-diagonalisable principal part and variable multiplicities, {II}: microlocal analysis. 
{\em arxiv.org/abs/2001.04709}, 2020. 

\bibitem[GKOS01]{GKOS:01}
M.~Grosser, M.~Kunzinger, M.~Oberguggenberger, and R.~Steinbauer.
\newblock {\em Geometric Theory of generalized Functions with Applications to General Relativity}, volume 537 of
{Mathematics and its Applications}.
\newblock Kluwer Acad. Publ., Dordrecht, 2001.

\bibitem[Hor93]{Hord} 
{L. H\"ormander}.
Hyperbolic systems with double characteristics, 
{\em Comm. Pure Appl. Math.}, 46:261--301, 1993.

\bibitem[Hor97]{Hor97}
L. H\"ormander.
\newblock Lectures on nonlinear hyperbolic differential equations.  
\newblock Mathematics and Applications, 26, {\em Springer-Verlag}, Berlin, 1997.

\bibitem[HdH01]{HdH01}
G.~H{\"o}rmann and M.~V. de~Hoop.
\newblock Microlocal analysis and global solutions of some hyperbolic equations
  with discontinuous coefficients.
\newblock {\em Acta Appl. Math.}, 67(2):173--224, 2001.

\bibitem[HdH02]{HdH02}
G.~H{\"o}rmann and M.~V. de~Hoop.
\newblock Detection of wave front set perturbations via correlation: foundation
  for wave-equation tomography.
\newblock {\em Appl. Anal.}, 81(6):1443--1465, 2002.

\bibitem[IP]{IvPet}
{V. Ya. Ivrii and V. M. Petkov}.
Necessary conditions for the correctness of the Cauchy problem
for non-strictly hyperbolic equations.
(Russian) {\em Russian Math. Surveys}, {29}:3--70, 1974.

\bibitem[J09]{J:09}
E. Jannelli.
\newblock The hyperbolic symmetrizer: theory and applications.
\newblock{\em in Advances in Phase Space Analysis of PDEs}, Birkh\"auser, 78:113--139, 2009.

\bibitem[JT]{JT}
E. Jannelli and G. Taglialatela.
\newblock Homogeneous weakly hyperbolic equations with time dependent analytic coefficients.
\newblock{\em J. Differential Equations}, 251:995--1029, 2011.

\bibitem[KY]{KY:06}
{K. Kajitani and Y. Yuzawa.}
\newblock  The Cauchy problem for hyperbolic systems with H\"older continuous coefficients with respect to the time variable.
{\em Ann. Sc. Norm. Super. Pisa Cl. Sci.}, {5(4)}:465--482, 2006.

\bibitem[KR]{KR2}
{I. Kamotski and M. Ruzhansky}.
Regularity properties, representation of solutions and 
spectral asymptotics of systems with multiplicities, 
{\em Comm. Partial Differential Equations},
{32}:1--35, 2007.


\bibitem[KS]{KS}
T. Kinoshita and S. Spagnolo,
\newblock Hyperbolic equations with non-analytic coefficients.
\newblock{\em Math. Ann.} 336:551--569, 2006.

\bibitem[KO]{9}
{V. V. Kucherenko and Yu. V. Osipov}.
The Cauchy problem for nonstrictly hyperbolic equations. (Russian)
{\em Mat. Sb. (N.S.)}, {120}(162):84--111, 1983.

\bibitem[LO91]{LO:91}
F.~Lafon and M.~Oberguggenberger.
\newblock Generalized solutions to symmetric hyperbolic systems with
  discontinuous coefficients: the multidimensional case.
\newblock {\em J. Math. Anal. Appl.}, 160(1):93--106, 1991.

\bibitem[MR]{MR} 
{M. Mascarello and L. Rodino}.
   Partial differential equations with multiple characteristics.
   Akademie Ver., 1997.
 

\bibitem[MU1]{MU:1}
{R. B. Melrose and G. A. Uhlmann}.
{Lagrangian intersection and the
Cauchy problem}, {\em Communs. Pure and Appl. Math.}, {32}:483--519, 1979.

\bibitem[MU2]{MU:2}
{R. B. Melrose and G. A. Uhlmann}.
{Microlocal structure of involutive
conical refraction}, {\em Duke Math. J.}, {46}:571--582, 1979.

\bibitem[MRT19]{MRT:19}
{J. C. Munoz, M. Ruzhansky and N. Tokmagambetov}.
{Wave propagation with irregular dissipation and applications to acoustic problems and shallow waters}, {\em J. Math. Pures Appl.}, 9(123), 127--147, 2019.

\bibitem[Obe92]{Oberguggenberger:Bk-1992}
M.~Oberguggenberger.
\newblock {\em Multiplication of distributions and applications to partial
  differential equations}, volume 259 of {\em Pitman Research Notes in
  Mathematics Series}.
\newblock Longman Scientific \& Technical, Harlow, 1992.

\bibitem[O70]{O70}
O. A. Oleinik.
\newblock On the Cauchy problem for weakly hyperbolic equations.
\newblock{\em  Comm. Pure Appl. Math.} 23:569--586, 1970.

\bibitem[OT84]{OT:84}
{Y. Ohya and S. Tarama.}
\newblock The Cauchy Problem with multiple characteristics in the Gevery class - H\"older coefficients in $t$.
\newblock{\em Hyperbolic Equations and related topics. Kataka/Kioto}, 273--306, 1984.

\bibitem[PP]{PP}
{C. Parenti and A. Parmeggiani}.
On the Cauchy problem for hyperbolic operators with double characteristics. 
{\em Comm. Partial Diff. Eq.}, {34}:837--888, 2009.

\bibitem[RT]{RT}
{Ruzhansky M. and Tokmagambetov N.}
Wave equation for operators with discrete spectrum and irregular propagation speed.
{\em Arch. Ration. Mech. Anal.}, 226: 1161--1207, 2017.

\bibitem[RY]{RY}
{Ruzhansky M. and Yessirkegenov N.}
Very weak solutions to hypoelliptic wave equations. 
{\em J. Differential Equations}, 268 2063--2088, 2020.

\bibitem[SW]{SW}
{M. E. Sebin and J. Wirth}.
On a wave equation with singular dissipation.
{\em https://arxiv.org/abs/2002.00825}, 2020.


\bibitem[ST06]{ST06}
S. Spagnolo and G. Taglialatela,
\newblock Some inequalities of Glaeser-Bronstein type.  
\newblock{\em Rend. Lincei Mat. Appl.},  17:367--375, 2006.

\bibitem[ST07]{ST}
S. Spagnolo and G. Taglialatela,
\newblock Homogeneous hyperbolic equations with coefficients depending on one space variable.   
\newblock{\em J. Differential Equations}, 4(3):533--553, 2007.

\bibitem[Y]{Yagd} 
{K. Yagdjian}.
The Cauchy problem for hyperbolic operators.
Multiple characteristics. Micro-local approach. 
Mathematical Topics, 12. Akademie Verlag, Berlin, 1997.

\end{thebibliography}
\end{document}